\documentclass[12pt,letterpaper]{article}
\usepackage[hmargin={25mm,25mm},vmargin={25mm,25mm}]{geometry}
\usepackage{amsmath,amsthm,amssymb,bbm}
\usepackage{graphicx}
\usepackage{fancyhdr}
\usepackage{hyperref}
\usepackage{lipsum}
\usepackage[center]{caption}
\usepackage{verbatim, ifpdf}
\usepackage{graphicx, color}
\usepackage{enumerate}
\usepackage{csvsimple}
\usepackage[parfill]{parskip}
\usepackage [english]{babel}
\usepackage [autostyle, english = american]{csquotes}
\usepackage{centernot}
\usepackage{mathtools}
\usepackage{ stmaryrd }
\usepackage[backend=biber]{biblatex}
\usepackage{setspace}
\usepackage{fullpage}
\usepackage{colonequals}

\MakeOuterQuote{"}
\DeclareMathOperator{\End}{End}
\DeclareMathOperator{\can}{can}
\title{$p$-adic Equidistribution of Special Loci in a Product of Modular Curves}
\author{Dan Townsend}

\usepackage{microtype}
\begin{document}
	
	\newtheoremstyle{GapBefore}
	{1em}
	{0.2em}
	{\itshape}
	{}
	{\bfseries}
	{.}
	{.5em}
	{}
	
	\theoremstyle{GapBefore}
	\newtheorem{thm}{Theorem}[section]
	\newtheorem{lemma}[thm]{Lemma}
	\newtheorem{prop}[thm]{Proposition}
	\newtheorem{cor}[thm]{Corollary}
	\newtheorem{conj}[thm]{Conjecture}
	\theoremstyle{remark}
	\newtheorem{rem}[thm]{Remark}
	\newtheorem{defn}[thm]{Definition}
	\newtheorem{calc}[thm]{Calculation}
	\newcommand{\arr}{\ensuremath{\mathbb{R}}}
	\newcommand{\ay}{\ensuremath{\mathbb{A}}}
	\newcommand{\see}{\ensuremath{\mathbb{C}}}
	\newcommand{\kew}{\ensuremath{\mathbb{Q}}}
	\newcommand{\pee}{\ensuremath{\mathbb{P}}}
	\newcommand{\zed}{\ensuremath{\mathbb{Z}}}
	\newcommand{\enn}{\ensuremath{\mathbb{N}}}
	\newcommand{\emm}{\ensuremath{\mathfrak{M}}}
	\newcommand{\ok}{\ensuremath{\mathcal{O}_K}}
	\newcommand{\oh}{\ensuremath{\mathcal{O}}}
	\newcommand{\ol}{\ensuremath{\mathcal{O}_L}}
	\newcommand{\eff}{\ensuremath{\mathbb{F}}}
	\newcommand{\rhobar}{\ensuremath{\bar{\rho}}}
	\newcommand{\gee}{\ensuremath{\mathcal{G}}}
	\newcommand{\fpbar}{\ensuremath{\overline{\mathbb{F}}_p}}
	\newcommand{\cpun}{\ensuremath{\overline{\mathbb{C}_p^\text{un}}}}
	\newcommand{\cp}{\ensuremath{\mathbb{C}_p}}
	\newcommand{\Sha}{\textls{III}}
	\newcommand{\tdlc}{totally disconnected locally compact group}

	\maketitle
	
	\begin{abstract}
		Let $C\subset X(1)\times X(1)$ be a smooth curve defined over $\cp$ with irreducible reduction. We study the intersection loci of $C$ with the modular subvarieties $Y_0(n)$. If the curve $C$ avoids points where both coordinates have supersingular reduction, or if a sequence $Y_0(a_n)$ is taken where the numbers $a_n$ have increasing divisibility by $p$, then these loci equidistribute to the unique canonical point of the analytification, $C^\text{Berk}$. If neither condition is satisfied, we expect equidistribution to fail and give a family of examples whose behaviour is believed to be typical. We also study the accumulation points of the set $C\cap\bigcup_nY_0(n)$ and show that this set is always non-discrete.
	\end{abstract}

	\section{Introduction}

	Let $p$ be a prime number and view $Y(1)_{j_1}\times Y(1)_{j_2}\cong \cp^2$ as the product of $j$-lines over \cp. This is the moduli space of pairs of elliptic curves over \cp. This space contains, for each $n\geq1$, an copy of the modular subvariety $Y_0(n)$, coming from the map which sends a cyclic $n$-isogeny $(E_1\xrightarrow{n}E_2)$ to the point $(j(E_1),j(E_2))$. 
	
	Let $C$ be an irreducible curve in the compactification $X(1)\times X(1)\cong \pee^1_{\cp}\times\pee^1_{\cp}$ of this product of $j$-lines, which is not a Shimura subvariety (here, this means that $C$ is not $Y_0(n)$ or of the form $\{j_1\}\times \pee^1$ or $\pee^1\times \{j_2\}$ for $j_1$ or $j_2$ a $j$-invariant corresponding to an elliptic curve with complex multiplication (CM)). Consider the intersections of $C$ with modular curves. Let $\bar{\delta}_n(C)$ be the probability measure supported on the intersection of $C$ with $Y_0(n)$ (counting with multiplicity, see \S\ref{Notation} for a precise definition). The main result is: 
	
	\begin{thm} \label{Main}
		Let $C$ be a smooth curve in $X(1)_{j_1}\times X(1)_{j_2}$, as above, defined over \cp \ and let $\mathcal{C}$ be a semistable model of $C$. Suppose that the reduction $\mathcal{C}_{\fpbar}$ is irreducible.
		\begin{enumerate}[(i)]
			\item Let $a_n$ be a sequence of positive integers such that $v_p(a_n)\to \infty$. Then there is weak convergence of measures \[\bar{\delta}_{a_n}(C)\to \delta_{x_\text{can}}.\]
			\item Suppose further that $C$ avoids points both of whose reductions are supersingular. Then there is weak convergence of measures
			\[\bar{\delta}_{n}(C)\to \delta_{x_\text{can}}.\]
		\end{enumerate}
	\end{thm}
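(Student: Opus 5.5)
The plan is to reduce weak convergence on $C^\text{Berk}$ to a statement about reductions. Since $\mathcal{C}_{\fpbar}$ is irreducible and $C$ has good reduction with respect to the semistable model $\mathcal{C}$, the reduction map $\mathrm{red}\colon C^\text{Berk}\to\mathcal{C}_{\fpbar}$ is available. The canonical point $x_\text{can}$ is the unique point mapping to the generic point of $\mathcal{C}_{\fpbar}$, and every other point lies in a residue disc $\mathrm{red}^{-1}(\bar{x})$ for some closed point $\bar{x}\in\mathcal{C}_{\fpbar}(\fpbar)$. These discs are open, and the topology near $x_\text{can}$ is generated by complements of finitely many such discs, together with closed subdiscs of them. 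The criterion I would prove first is therefore the following: $\bar{\delta}_n(C)\to\delta_{x_\text{can}}$ holds as soon as, for each fixed closed point $\bar{x}$, the proportion of $C\cap Y_0(n)$ (with multiplicity) reducing to $\bar{x}$ tends to $0$. This follows from a standard argument with test functions that are locally constant on a neighbourhood basis of $x_\text{can}$, together with compactness of $C^\text{Berk}$.

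Next I would count. The total mass is $\deg(C\cdot Y_0(n))=d_1\psi(n)+d_2\psi(n)$ by bidegree considerations, where $\psi(n)=n\prod_{\ell\mid n}(1+1/\ell)$ is the degree of $Y_0(n)\to X(1)$. For a fixed $\bar{x}=(\bar{j}_1,\bar{j}_2)$, I would bound the number of intersection points in its residue disc. Those points are pairs $(j_1,j_2)\in C$ with $j_1$ reducing to $\bar{j}_1$ and a cyclic $n$-isogeny to $j_2$ reducing to $\bar{j}_2$. After reduction, this means there is an $n$-isogeny from $\bar{j}_1$ to $\bar{j}_2$ in the special fibre. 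By Kronecker's congruence and the standard description of $Y_0(n)_{\fpbar}$, the reduction consists of components of Frobenius type for the $p$-part, and correspondences of degree $\psi(n')$ for the prime-to-$p$ part. The local intersection number of $C$ with $Y_0(n)$ inside the disc equals the intersection multiplicity of the reductions at $\bar{x}$, by continuity of intersection numbers on the smooth model.

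Both cases then follow from this local count. \textbf{Case (i).} If $v_p(a_n)\to\infty$, write $a_n=p^k m$ with $k\to\infty$. The reduction of $Y_0(p^km)$ is a union of graphs $\bar{j}_2=\bar{j}_1^{p^{i}}\circ T$ and transposes, with $0\le i\le k$. The components near $\bar{x}$ that pass through the fixed point have multiplicity that grows, but their number is controlled. The key estimate is that the fraction of the total degree $\psi(a_n)$ contributed by any single point is $O(1/p^{\min(i,k-i)})$, up to the contributions of the two extreme Frobenius-type components. Those extreme components meet the irreducible non-Frobenius curve $\mathcal{C}_{\fpbar}$ in a number of points that grows like $p^k$, so mass spreads out over many points and each point receives $o(1)$ of the total. \textbf{Case (ii).} If $C$ avoids supersingular-supersingular points, every point of $C$ in the relevant discs has at least one ordinary coordinate. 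The $n$-isogenous partners of an ordinary curve reduce injectively along Serre–Tate canonical lifts, and isogenies of a fixed reduction type are governed by the Hecke orbit in $\fpbar$. Since that orbit is infinite and equidistributes among residue classes, a fixed $\bar{x}$ captures only $O(\psi(n)/\#\text{orbit})=o(\psi(n))$ of the mass.

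I expect the main obstacle to be bounding the local intersection multiplicity of $C$ with $Y_0(n)$ in a single residue disc uniformly in $n$. This is essentially the same as showing that $\mathcal{C}_{\fpbar}$ is not tangent to too high order to the reductions of the Hecke correspondences, and that it is not a component of them; the irreducibility hypothesis and the assumption that $C$ is not Shimura are used for the latter. To handle this step, I would first try the Serre–Tate coordinates: near an ordinary point, $Y_0(n)$ becomes a translate of a subtorus $q_2=q_1^{\lambda}\zeta$ in the formal torus. This turns the multiplicity question into the vanishing order of $q_2-\zeta q_1^\lambda$ restricted to a fixed formal branch, which is bounded independently of $n$ unless the branch is itself such a subtorus. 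In the supersingular-supersingular case no such coordinates are available, and this is exactly why hypothesis (ii) is needed.
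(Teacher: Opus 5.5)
There is a genuine gap in your first reduction. You propose to verify that the proportion of $C\cap Y_0(n)$ reducing to each fixed closed point $\bar x$ of $\mathcal{C}_{\fpbar}$ tends to $0$. This is sufficient, but it is strictly stronger than weak convergence. Points of a residue disc near its boundary are close to $x_\text{can}$, because neighbourhoods of $x_\text{can}$ are complements of finitely many \emph{closed} discs of radius $<1$; that is why Lemma \ref{padicconditionproof} only involves balls of radius $R<1$. Moreover, your condition is false for curves the theorem covers.

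Take $C=\{\alpha\}\times\pee^1$ with $\alpha$ non-CM and $\bar\alpha\in\mathbb{F}_p$ ordinary. This $C$ is smooth, has irreducible reduction, is not Shimura, and avoids supersingular pairs. Every curve cyclically $p^k$-isogenous to $E_\alpha$ reduces to a Frobenius twist $\bar E_\alpha^{(p^{2i-k})}\cong\bar E_\alpha$. Hence \emph{all} of $C\cap Y_0(p^k)$ reduces to the single point $(\bar\alpha,\bar\alpha)$, for every $k$. The same happens for $\bar\alpha\in\mathbb{F}_p$ supersingular, which falls under (i).

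Equidistribution nevertheless holds. The $p^k$ points coming from \'etale-type kernels satisfy $q_2^{p^k}=(q^0)^u$ with $u\in\zed_p^\times$, so they form a coset of $\mu_{p^k}$ in Serre--Tate coordinates. Any ball of radius $R<1$ therefore contains at most $u_R$ of them, and the rest drift to the boundary of the residue disc, i.e.\ towards $x_\text{can}$.

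The same example refutes three further steps of your plan:
\begin{enumerate}[(a)]
\item your Case (i) claim that the extreme Frobenius-type components meet $\mathcal{C}_{\fpbar}$ in $\asymp p^k$ points, since the graph $\bar j_1=\bar j_2^{\,p^k}$ meets $\{\bar\alpha\}\times\pee^1$ in one point of multiplicity $p^k$;
\item your Case (ii) heuristic that Hecke orbits spread over residue classes;
\item the uniform-in-$n$ bound hoped for in your last paragraph, since a single torus $q_2^{p^k}=q_1^{u}$ already meets $C$ in $p^k$ points of the open residue disc.
\end{enumerate}

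What is missing is a count on balls of radius $R<1$, with constants depending on $R$. Intersection theory on the special fibre cannot see the radius, so it cannot supply this.
\begin{enumerate}[(a)]
\item In ordinary discs, the paper writes $Y_0(n)\cap B_R$ as at most $C_1u_R\sqrt n$ torus translates, by counting elements of norm $mn$ in an imaginary quadratic order; here $u_R$ is the finite number of $p$-power roots of unity in $B_R(1)$. Strassmann's theorem plus compactness of $\zed_p$ in the exponent then bounds each translate's intersection with $C$ by $N(F,R)$, giving $O(\sqrt n)=o(n)$.
\item In supersingular discs, which (i) must handle whenever $C$ meets them, the input is the period map together with Rapoport--Zink. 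This gives $Y_0(n_0p^m)\cap\bar B_R=\emptyset$ for $m\ge M(R)$, uniformly in $n_0$ prime to $p$ after transporting along prime-to-$p$ Hecke correspondences.
\item Near $(\infty,\infty)$, Tate coordinates handle the count.
\end{enumerate}
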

	
	This is not the first equidistribution result of this flavour: in the 1-dimensional setting, Sebasti\'an Herrero, Ricardo Menares and Juan Rivera-Letelier analysed which sequences of CM points were equidistributed in the $j$-line over \cp. All sequences of ordinary discriminants had the same limiting measure (Theorem A in \cite{HMR20}), as did sequences of supersingular discriminants becoming increasingly divisible by $p$ (that is, with a condition analogous to that appearing in (i) of Theorem \ref{Main}). On the other hand, ranging over sequences of supersingular discriminants with bounded $p$-divisibility gave infinitely many possible limiting measures (Corollary 1.2 in \cite{HMR21}). This is in contrast with the answer to the same question over the complex numbers, where all sequences with discriminant tending to $-\infty$ equidistribute to the same measure. This question about equidistribution of CM $j$-invariants over the complex numbers was studied over many decades, finally being resolved in general in \S2.3 of \cite{CU04}. The complex analogue of Theorem \ref{Main} here is contained in work of Salim Tayou (\cite{Tay20}, Theorem 1.1) which proves the result for any GSpin Shimura variety.

	The number of special points on $C$ (that is, those where both coordinates correspond to elliptic curves with complex multiplication) is known to be finite (see for example \cite{And98}). Further, with the asymptotic behaviour of CM points (with respect to discriminant) having been studied by Herrero, Menares and Rivera-Letelier, the remaining special loci in $C$ to understand are the intersections of $C$ with modular curves, as in Theorem \ref{Main}.

	In the ordinary case, further analysis of the locus $L=C\cap \bigcup Y_0(n)$ will examine where the accumulation points lie. In particular, the set can be non-discrete. This locus will be studied by introducing some analytic curves which look similar to modular curves on deformation discs. These will be called \textit{tori}. A formal definition is given in Definition \ref{tori}, inspired by the appearance of modular curves in Serre-Tate coordinates. Those that actually are branches of modular curves will be called \textit{modular tori}.
	
	\begin{thm}\label{CasesThm}
		Let $P=(j_1,j_2) \in Y(1)\times Y(1)$ be a point on a curve $C$ defined over $\cp$ corresponding to elliptic curves $E_1,E_2/$ defined over $\cp$ with good ordinary reduction such that their reductions $\bar{E}_1$ and $\bar{E_2}$ are isogenous. Then 
	\begin{itemize}
		\item[(a)] if neither $E_1$ nor $E_2$ has CM, then $P$ is an accumulation point of $L$ if and only if $P$ lies on a torus.
		\item[(b)] if exactly one of $E_1$ and $E_2$ has CM, then $P$ is not an accumulation point of $L$.
		\item[(c)] if both $E_1$ and $E_2$ have CM then $P$ is an accumulation point of $L$ if and only if $C$ is tangent at $P$ to a torus passing through $P$.
	\end{itemize}
	\end{thm}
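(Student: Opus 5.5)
Work locally at $P$ in Serre--Tate coordinates. Since $\bar E_1,\bar E_2$ are ordinary, the residue disc of $P$ in $Y(1)\times Y(1)$ is the product of the two deformation spaces, and we use canonical coordinates $(q_1,q_2)\in(1+\mathfrak m_{\cp})^2$, with $q_i=1$ at the canonical lift of $\bar E_i$. Modular curves meeting this residue disc come from isogenies $\bar E_1\to\bar E_2$ lifted to characteristic zero. In these coordinates each branch is a ``multiplicative'' analytic curve $q_2=\zeta\, q_1^{\lambda}$, where $\lambda\in\zed_p$ is determined by the degree and the $p$-part of the isogeny: the étale part contributes a unit and the Frobenius part a power of $p$ (or $p^{-1}$ for Verschiebung), up to the choice of isomorphism $T_p\hat{\bar E}_1\cong T_p\hat{\bar E}_2$. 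The $\zeta$ is a $p$-power root of unity. This matches Definition \ref{tori}: a torus is a curve $q_2=\zeta q_1^{\lambda}$ with $\zeta\in\mu_{p^\infty}$ and $\lambda$ in the appropriate $p$-adic set. The modular tori are those with $\lambda$ in the dense subset $\{\text{norms of isogenies}\}\cdot\zed_p^{\times 2}$-type exponents. The first step is to record this description precisely, using the standard fact that the canonical lift has $q=1$ and that quasi-canonical and isogenous points correspond to torsion translates.

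Next, we describe accumulation. Near $P$ the curve $C$ is an analytic germ $F(q_1,q_2)=0$, and $L\cap(\text{disc})$ is the union over modular tori $T$ of $C\cap T$. The key observation is that the modular tori are dense, in a uniform sense, in the family of all tori: the exponents $\lambda$ coming from isogenies of increasing degree are dense in $\zed_p$, or in the relevant coset. The key estimate is then a Weierstrass-preparation / Newton-polygon argument. If $C$ meets a torus $T_0$ at $P$ transversally, or more generally isolatedly, then for modular tori $T$ with $(\lambda,\zeta)$ close to those of $T_0$, the intersection $C\cap T$ has a point close to $P$, by continuity of roots (Krasner/Hensel). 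Taking a sequence of such $T$ gives points of $L$ converging to $P$. Conversely, if $P$ lies on no torus, then $\log q_2/\log q_1$ at $P$ is not a valid exponent. Nearby points of $C$ then satisfy $|\log q_2-\lambda\log q_1|$ bounded below for every modular exponent $\lambda$ of bounded size, while large exponents force the point away from $P$. Hence $P$ is isolated. This gives (a). When neither curve is CM, $P$ is not the origin, so lying on a torus is a nondegenerate condition and the continuity argument applies.

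For (b), say $E_1$ has CM. Then $q_1(P)$ is a root of unity, as quasi-canonical lifts are torsion in Serre--Tate coordinates, while $q_2(P)$ is not. Every torus through a point with $q_1$ torsion has $q_2$ torsion, so $P$ lies on no torus. The isolation argument from (a) then shows $P$ is not an accumulation point. For (c), both $q_i(P)$ are torsion, and after translating by torsion we may assume $P$ is the origin $(1,1)$. Every torus $q_2=q_1^{\lambda}$ passes through $P$, with tangent direction $\log q_2=\lambda\log q_1$. Nearby intersections of $C$ with modular tori, other than $P$ itself, then exist iff the tangent slope of $C$ at $P$ lies in the closure of the modular exponents, i.e.\ iff $C$ is tangent to a torus. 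To see this, write $C$ as $\log q_2=\mu\log q_1+O((\log q_1)^2)$. Then $C\cap\{q_2=q_1^{\lambda}\}$ has a nontrivial solution with $|\log q_1|\approx|\lambda-\mu|$, which tends to $P$ as $\lambda\to\mu$ through modular exponents. If $\mu$ is not a limit of modular exponents, no such small solutions exist.

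The main obstacle is the uniformity in the isolation direction. Infinitely many modular tori must be controlled simultaneously, and we need a lower bound showing that tori with large $p$-power parts of the degree, which are strongly contracted or expanded in the disc, cannot accumulate at $P$. I would handle this by splitting the isogenies by $v_p(\deg)$ and using that Frobenius-type tori $q_2=q_1^{p^k}$ map the disc into ever smaller neighbourhoods of the canonical locus. Points of $C$ on them are then forced towards $q_1$ or $q_2$ torsion, which is away from $P$ in cases (a) and (b).
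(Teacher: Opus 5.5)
\textbf{Parts (a) and (c).} Here you follow the paper's route. You need density of the modular exponents, which you assert but the paper has to prove in Lemma \ref{approxlemma} and Corollary \ref{powercalculation}. You then use Strassmann-type continuity of roots to produce intersection points near $P$, and compactness of the exponent set for the converse. Passing to $\log q_1,\log q_2$ so that tori become lines is a repackaging of the paper's coefficients $\beta_i(\zeta,c)$, not a different argument.

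\textbf{Part (b).} This is where your argument fails, and it cannot be repaired, because (b) is false as stated.

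Your isolation argument relies on the claim that tori with exponents of large valuation push points toward torsion, ``which is away from $P$''. The tori you name, $q_2=q_1^{p^k}$, converge uniformly on $\bar{B}_R(1)$ ($R<1$) to the CM line $q_2=1$. In case (b), $P$ can lie on exactly that line. Equivalently, your claim that $P$ lies on no torus overlooks the degenerate exponent $0$, which is precisely what compactness returns as a limit of nonzero modular exponents.

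\emph{Counterexample.} Let $\bar E/\mathbb{F}_p$ be ordinary, let $E_2={\bar E}^{\text{can}}$ (so $q_2^0=1$), and let $E_1$ be a non-CM lift of $\bar E$ (so $q_1^0$ is not torsion).
\begin{enumerate}
\item Frobenius is bijective on the \'etale part, so $a_{F^k}$ is a unit. Theorem \ref{toruscondition} applied to $F^k\colon\bar E\to\bar E^{(p^k)}=\bar E$ then shows that quotienting by the canonical subgroup of order $p^k$ gives a branch $q_2=q_1^{c_k}$ of $Y_0(p^k)$ with $v_p(c_k)=k$.
\item Any non-Shimura $C$ through $P$ (a generic line, say) meets the line $q_2=1$ at $P$ with finite multiplicity.
\item Strassmann's theorem, used exactly as in the proof of Proposition \ref{nonCM}, then gives points $Q_k\in C\cap Y_0(p^k)$ with $Q_k\to P$.
\item $Q_k\neq P$, since $(q_1^0)^{c_k}\neq 1$.
\end{enumerate}
So $P$ is an accumulation point of $L$ although exactly one coordinate has CM.

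\textbf{The paper's proof of (b).} It has the same oversight. It asserts that $q_1^a=q_2^b$ cannot hold when exactly one $q_i$ is torsion, ignoring the case $b=0$. But $c=0$ is exactly a limit that Lemma \ref{MustTorus} may return. Running Proposition \ref{nonCM} with limiting exponent $c=0$ produces accumulation, not isolation. Following the paper will therefore not rescue this part.
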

	Note that the hypotheses on $C$ in Theorem \ref{Main} (that $C$ is smooth and has irreducible reduction) are not required for this Theorem. It follows from this characterisation that for any curve $C$, the set of $C$'s intersection with the special locus is never discrete (see Corollary \ref{always}).
	
	Of course, if $P$ is an accumulation point then $\bar{E}_1$ and $\bar{E_2}$ are isogenous: in particular $P$ is within a distance $1/p$ of the locus in question, so there are $E_1'\equiv E_1 \pmod{p}$, $E_2'\equiv E_2\pmod{p}$ with $(E_1',E_2')\in C\cap \bigcup_nY_0(n)$. This means that $E_1'$ and $E_2'$ are isogenous, hence so are their reductions $\bar{E}_1$ and $\bar{E}_2$. Thus there is no restriction in the initial assumption that the reductions are isogenous.

	This description allows the results from explicit methods here to be related to a special case of previous work of Davesh Maulik and Bjorn Poonen. In \cite{MP09}, they investigate the jumping loci of various families of varieties. In comparison with Theorem \ref{CasesThm} here, their work (specifically, Proposition 1.13 there) says that the collection of points on $C$ where either one coordinate has CM or the two coordinates correspond to a pair of isogenous elliptic curves is $p$-adically nowhere dense in $C$. This is implied by Theorem \ref{CasesThm}, as discussed in Section \ref{Relate}. While it is not the case that the equidistribution result of Theorem \ref{Main} implies this nowhere density result (nor vice-versa), and the methods used to prove Theorem \ref{Main} also turn out to be largely $p$-adic in nature, the statements about equidistribution capture more about the asymptotic distribution of the special points on $C$ by expanding the viewpoint to the larger Berkovich space. Note, however, that the results from \cite{MP09} do not require any assumptions on ordinariness, whereas for the techniques used in the proof of \ref{CasesThm} here, ordinariness is essential.

	\subsection{Outline of the Proof}
	
	Following Herrero, Menares and Rivera-Letelier
	(Lemma 2.3 in \cite{HMR20}), the condition of weak convergence of measures in Theorem \ref{Main}, which is phrased in terms of Berkovich space, may be reinterpreted as follows, making reference only to $\cp-$points. Some background on the construction of the Berkovich space associated to a smooth curve is contained in \S\ref{Berk}. 	Let $C\subset \pee^1_{\cp}\times \pee^1_{\cp}$ be a smooth curve with integral model $\mathcal{C}$ and irreducible reduction $\bar{C}\colonequals\mathcal{C}_{\fpbar}$. Let $\Lambda_n$ be a sequence of finite non-empty sub-multisets of $C(\cp)$ (that is, elements may appear with multiplicity). Also view $\Lambda_n$ inside teh analytification $C^\text{Berk}$ under the inclusion $C(\cp)\hookrightarrow C^\text{Berk}$. For each $n$, let $\bar{\delta}_n$ be the normalised probability measure supported on $\Lambda_n$, accounting for multiplicity, and denote by $\delta_{x_\text{can}}$ the measure supported at the unique canonical point of $C^\text{Berk}$. In the Lemma below, the following notation will be used:
	
	\begin{defn}
		Let $r>0$ and $a_1,a_2 \in \cp$. Then the set $B^\infty_r((a_1,a_2))$ is defined as \[B^\infty_r((a_1,a_2))=\{(x_1,x_2)\in \cp^2\ | \ |x_1-a_1|_p>r \text{ or } |x_2-a_2|_p>r\}.\]
	\end{defn} 
	
	\begin{lemma}\label{padiccondition}
	 The sequence of measures $\bar{\delta}_n$ converges weakly to $\delta_{x_\text{can}}$ if and only if for any open ball of the form $B=B_r((a_1,a_2))$ of radius $r<1$, or $B = B^\infty_R((a_1,a_2))$ for $R>1$,
		\begin{align}\label{Limit}
			\lim_{n\to\infty}\frac{|\Lambda_{n}\cap B|}{|\Lambda_{n}|}= 0.
		\end{align} Here, $|S|$ means the cardinality of the multi-set $S$ (i.e. counting with multiplicity).
	\end{lemma}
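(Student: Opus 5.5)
The plan is to use the structure of $C^\text{Berk}$ near its canonical point. Since $\mathcal{C}$ has irreducible reduction $\bar C$, the canonical point $x_\text{can}$ is the Shilov boundary point of the model, i.e. the point corresponding to the generic point of $\bar C$. Let $\mathrm{red}\colon C^\text{Berk}\to\mathcal{C}_{\fpbar}$ be the reduction map. The complement $C^\text{Berk}\setminus\{x_\text{can}\}$ is then the disjoint union of the residue classes $\mathrm{red}^{-1}(\bar P)$ for closed points $\bar P\in\bar C$. Each residue class is an open subset not containing $x_\text{can}$. Moreover, a basic open neighbourhood of $x_\text{can}$ contains all but finitely many residue classes entirely, and contains the complement of a closed ball of radius $<1$ inside each of the remaining finitely many. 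This is the standard description of the topology at a type II point: the tangent directions at $x_\text{can}$ are indexed by the points of $\bar C$.

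For the forward direction, assume $\bar\delta_n\to\delta_{x_\text{can}}$ weakly. Take $B=B_r((a_1,a_2))$ with $r<1$. Its intersection with $C(\cp)$ has closure in $C^\text{Berk}$ a compact set $K$ not containing $x_\text{can}$. Indeed, every point of $B\cap C$ reduces to the single point $(\bar a_1,\bar a_2)$, and the closure lies in a closed ball of radius $r<1$ inside that residue class. This needs a small argument about how coordinate balls in $\cp^2$ restrict to the curve. By Urysohn's lemma, choose a continuous $f\ge \mathbbm{1}_K$ with $f(x_\text{can})=0$ and $0\le f\le1$. Then $\limsup \bar\delta_n(B)\le \lim\int f\,d\bar\delta_n = 0$. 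For $B^\infty_R$ with $R>1$, the same argument applies: the points where a coordinate has absolute value $>R$ lie in the residue classes above $\infty$ in one factor and avoid a neighbourhood of $x_\text{can}$. A point of $B^\infty_R((a_1,a_2))$ with $|a_i|\le1$ has $|x_i|>R>1$. If $|a_i|>1$, I would reduce to the case $a_i=0$, or handle it via a coordinate change on $\pee^1$.

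For the converse, assume (\ref{Limit}) holds for all such balls. By the Portmanteau theorem, it suffices to show $\bar\delta_n(U)\to1$ for every open neighbourhood $U$ of $x_\text{can}$. By the topological description above, $C^\text{Berk}\setminus U$ is contained in finitely many closed subdiscs of radius $<1$ in finitely many residue classes. This includes those above $\infty\times\pee^1$ and $\pee^1\times\infty$, where the discs become regions $|x_i|\ge R$. The $\cp$-points of each such piece lie in a finite union of sets of the form $B_r$ or $B^\infty_R$, and (\ref{Limit}) gives $\bar\delta_n(C^\text{Berk}\setminus U)\to0$. Since the measures are supported on $C(\cp)$, only $\cp$-points matter.

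The main obstacle is the topological step: showing that neighbourhoods of $x_\text{can}$ in $C^\text{Berk}$ are cut out, on $C(\cp)$, by the ambient coordinate balls $B_r$ and $B^\infty_R$. This is where smoothness and irreducibility of the reduction are used. At a smooth point $\bar P$ of $\bar C$, one coordinate (say $x_1-a_1$ after translation) is a uniformising parameter on the residue disc. A radius-$<1$ disc in that parameter is then contained in a product ball $B_{r'}$ with $r'<1$, and conversely. I would first treat this case by Hensel-type arguments, exactly as in the proof of Lemma 2.3 of \cite{HMR20}. Singular points of $\bar C$ (allowed in a semistable model) are the harder case, because the residue class is an annulus. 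I would handle it by noting that the relevant compact sets still sit at absolute distance $<1$ from $x_\text{can}$ in both coordinates.
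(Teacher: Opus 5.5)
Your proposal is correct and is essentially the paper's proof of Lemma~\ref{padicconditionproof}. The ``if'' direction uses that complements of finitely many discs of radius $<1$ form a neighbourhood base at $x_\text{can}$; the paper runs an explicit $\epsilon$-estimate where you invoke Portmanteau. The ``only if'' direction uses a continuous function equal to $1$ on the ball and $0$ at $x_\text{can}$; the paper builds it explicitly as $\phi\circ\alpha$ with $\alpha(x)=|T-a_P|_x$, where you use Urysohn. Your treatment of nodal residue annuli and of the sets at infinity is more careful than the paper's.

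One caveat: your reduction of $B^\infty_R((a_1,a_2))$ to $a_i=0$ is valid exactly when $|a_i|\le R$, in which case the set does not depend on the centre. If some $|a_i|>R$, the set contains the open neighbourhood $\{|x_i|<|a_i|\}$ of $x_\text{can}$, so the ``only if'' claim is false there. The statement must therefore be read with integral centres, as the paper implicitly does.
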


	\begin{rem}
		The condition that $\bar{C}$ is irreducible ensures that there is a unique canonical point, else there would be one for each component as well as other measures arising from the measures supported on the skeleton of the analytification.
	\end{rem}

	This turns the question of equidistribution in Theorem \ref{Main} into a question about bounding intersections of $C$ with modular curves on residue discs (or, more precisely, on any disc which is strictly smaller than a residue disc). The aim will then be to apply Lemma \ref{padiccondition} to the multisets $(C\cap Y_0(n))_n$, which by B\'ezout's Theorem have respective sizes deg$(C)\cdot d_n$, for some numbers $d_n\geq n$, the degrees of the curves $Y_0(n)$. So it will suffice to show that, for any fixed ball $B$ of radius strictly less than 1, or for balls at infinity as described in the statement of Lemma \ref{padiccondition},
	\begin{align}\label{ordseq} 
		\lim_{n\to\infty}\frac{|C\cap Y_0(n)\cap B|}{n}= 0\end{align}	
	in the ordinary case, or that 
	\begin{align}\label{ssingsubseq}
		\lim_{n\to\infty}\frac{|C\cap Y_0(a_n)\cap B|}{a_n}= 0\end{align} in the supersingular case, for any sequence $a_n$ with $v_p(a_n)\to \infty$.

	\subsection{Techniques in the Ordinary Case}
	
	 Around points on $C$ where both coordinates correspond to elliptic curves with ordinary reduction, the analytic change of coordinates to Serre-Tate coordinates (described in \S\ref{ST}) is defined and makes the curves $Y_0(n)$ far easier to deal with. In $j$-coordinates, the equations for modular curves become unwieldy very quickly. For instance, the equation cutting out $Y_0(2)$ already has 7 nonzero coefficients and the constant term has 15 decimal digits. See \S3 of \cite{BOS16}. In Serre-Tate coordinates, however, the $Y_0(n)$ are unions of translates of formal tori by $p$-power roots of unity on residue discs. These explicit equations, and the fact that the Serre-Tate change of coordinates has integral coefficients, turn the question into an explicit analysis of the zeros of some $p$-adic power series. Strassman's Theorem in non-Archimedean analysis (see \ref{Strass} for the statement) can then be used to bound the number of zeros.
	
	In the study of accumulation points in Section \ref{Cases}, the question still requires only local information, so Serre-Tate coordinates may be used here too. This time the focus is on finding intersection points, rather than bounding the total number of intersection points, and so the lower bounds from Strassman's Theorem, which apply over algebraically closed fields such as \cp, are used. A key step is Lemma \ref{approxlemma}, which checks that the collection of exponents appearing in modular tori is dense in the collection of all possible exponents. This is used to exhibit points on tori as accumulation points. The section concludes with the observation that there is an accumulation point of the special locus on any curve $C$.

	\subsection{Techniques in the Supersingular Case}
	
	In the supersingular case, properties of the period map will be used instead. There is a period map defined on any ball of radius strictly less than 1 inside a supersingular residue disc. It can be thought of as a map of rigid analytic spaces, and it collapses $p$-power Hecke orbits. Thus, fixing $R<1$, it is possible to bound the number of $m$ for which $Y_0(p^m)$ intersects a ball of radius $R$. Then (as long as $R$ is large enough), since prime-to-$p$ Hecke orbits preserve distance, there is uniform control over the $p$-divisibility of $n$ such that $Y_0(n)$ intersects the ball of radius $R$ at all. In particular, the equidistribution result follows from the stronger statement that, for divisible enough $n$ (what is `enough' depends on $R$), $Y_0(n)$ misses this ball completely and so the numerator in (\ref{ssingsubseq}) is eventually 0. The section concludes with a family of examples not satisfying the hypotheses $v_p(a_n)\to\infty$ where equidistribution fails.

	\subsection{Structure}
	
	In the ordinary setting, most of the calculations, including those which give a concrete description of the modular subvarieties $Y_0(n)$, will be undertaken in Serre-Tate coordinates, which are reviewed in Section \ref{ST}.

	In Section \ref{ModularCurves}, this description of modular curves in the ordinary locus will be obtained, using Serre-Tate coordinates. This was previously worked out by Rutger Noot  (\cite{Noo96}) and Ben Moonen (\cite{Moo98II}). In Section \ref{FinishProof}, this description is used to complete the evaluation of the limit in (\ref{ordseq}) for balls not containing infinity.
	
	Section \ref{Ex1} begins a study of non-discreteness by constructing an example in Serre-Tate coordinates of a curve where the locus $C\cap \bigcup_n Y_0(n)$ has accumulation points. This is then expanded in Section \ref{Cases} to analyse exactly when a point on an algebraic curve $C$ is an accumulation point of the locus $C\cap \bigcup_n Y_0(n)$, resulting in Theorem \ref{CasesThm}. In Section \ref{Relate} this result is related to the aforementioned work of Maulik and Poonen, \cite{MP09}

	For the supersingular setting, the construction and basic properties of the period map are reviewed in Section \ref{Period}, and the result for the subsequences specified in (\ref{ssingsubseq}) is deduced, using some results about Hecke orbits which are referenced in Section \ref{Hecke}.

	The argument will then be completed by verifying the conditions at infinity in \S\ref{infinity} for balls of the shape described in the statement of Lemma \ref{padiccondition}. This will be done using Tate coordinates coming from $p$-adic uniformisation.

	\subsection{Notation and Conventions}\label{Notation}
	
	The absolute value $|\cdot|$ on $\cp$ is normalised so that $|p|=\frac{1}{p}$, and the valuation $v_p$ satisfies $v_p(p)=1$. The maximal ideal of $\cp$ will be denoted $\mathfrak{m}_p$ and its valuation ring $\oh_p$.
	
	$K$ will denote a finite extension of the field $K_0=W(\fpbar)[1/p]$, with ring of integers \ok\ and maximal ideal $\mathfrak{m}_K$.
	
	If $L$ is a field and $I\triangleleft L[x_1,...,x_n]$ is an ideal, denote by $\mathbb{V}(I) \subset \ay^n_L$ the common vanishing of the polynomials in $I$. $\mathbb{V}(f)$ will be understood to mean $\mathbb{V}((f))$.
	
	The measure $\bar{\delta}_n(C)$ is defined by
	
	\[\bar{\delta}_n(C)= \frac{1}{\text{deg}(C)\text{ deg}(Y_0(n))} \sum_{P\in Y_0(n)\cap C} \text{mult}(P)\delta_P,\] 
	
	where mult$(P)$ is the degree of the intersection of $C$ and $Y_0(n)$ at $P$.
	
	The $j$-invariant of an elliptic curve will sometimes be conflated with a representative for the corresponding isomorphism class. Further, when looking at open balls of radius 1 in $j$-coordinates, since every point in the ball is a centre, it is not ambiguous to denote the centre by the common reduction modulo $\mathfrak{m}_p$ of all elliptic curves in that ball (even though this is not actually a point in that ball). Similarly, in the ordinary case, since Lemma \ref{padiccondition} requires finding a bound on all ordinary balls $B$ (i.e. with ordinary centre) of radius $R<1$, after perhaps increasing the radius of a ball to some $R'<1$, the ball may be assumed to contain the canonical lift of the common reduction of the elliptic curves in $B$. $B_R(a)$ will denote the open ball of radius $R$ centred at $a$, and $\bar{B}_R(a)$ will denote the corresponding closed ball. Two-dimensional balls will use the metric $d((a_1,b_1),(a_2,b_2)) = \max\{|a_1-a_2|,|b_1-b_2|\}$.
	
	For an isogeny $\bar{f}\colon \bar{E}_1\to\bar{E}_2$, the number $c_{\bar{f}}$ will be defined by the ratio \[c_{\bar{f}}=\frac{a_{\bar{f}^T}}{a_{\bar{f}}},\] where $a_{\bar{f}}\in \zed_p$ is the endomorphism of $\zed_p$ corresponding to the map induced by $\bar{f}$ on the \'etale part of p-divisible groups. A more careful definition of $a_{\bar{f}}$ is given in Section \ref{ST}, and the significance of the numbers $c_{\bar{f}}$ will be explained in Section \ref{ModularCurves}.
	
	Throughout, except in \S\ref{Cases}-\ref{Relate} where the assumptions are not necessary because the canonical point is not being discussed, the curve $C$ will be smooth, have irreducible reduction and (unless otherwise stated) not be a Shimura subvariety of the product of modular curves.

	\subsection{Acknowledgements}
	
	The author is very grateful to Ananth Shankar for proposing the problem and providing much support throughout. The author would like to thank Apoorva Agarwal, Haochen Cheng, Klaus K\"unnemann, Jackson Morrow, Riccardo Pengo, Micha\l \ Szachniewicz, Salim Tayou and Jit-Wu Yap for helpful conversations.
	
	This work was partially supported by the NSF grant DMS-2338942.

	\section{Background on Berkovich Analytification}\label{Berk}
	
	The background material in this section can be found in Matthew Baker's notes \cite{Bak07} and Vladimir Berkovich's book \cite{Ber90}.
	
	Let $C$ be a smooth proper integral curve over $\cp$. The Semistable Reduction Theorem says that $C$ admits a formal model $\mathcal{C}/\oh_p$ where no singularities appear except for ordinary double points. Write $\bar{C}$ for the special fibre of $\mathcal{C}$ and assume $\bar{C}$ is irreducible.
	
	The reduction map $\pi\colon C(\cp)\to \bar{C}(\fpbar)$ extends to a map
	\[r\colon C^\text{Berk}\to \bar{C},\] with target the scheme-theoretic points of $\bar{C}$. Under the assumption that $\bar{C}$ is irreducible, there is a unique point, $x_\text{can}$ in $C^\text{Berk}$ reducing to the generic point of $\bar{C}$ under the map $r$. A basis of neighbourhoods for this point consists of the complement of finitely many discs of radius strictly less than 1.
	
	\begin{lemma} \label{padicconditionproof}
		Let $\Lambda_n$ be divisors on $C$. Suppose for every ball $B$ of radius $R<1$ in $\pee^1\times \pee^1$ (using the supremum metric), the limit 
		\[\lim_{n\to \infty} \frac{\text{deg}(\Lambda_n|_B)}{\text{deg}(\Lambda_n)}\]
		is zero. Then $\bar{\delta}_{\Lambda_n}\to \delta_{x_\text{can}}$. Conversely, equidistribution holds only if all such limits are zero.
	\end{lemma}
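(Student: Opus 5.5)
The plan is to use the structure of $C^\text{Berk}$ over the reduction map $r$. Since $C^\text{Berk}$ is compact Hausdorff and the measures $\bar{\delta}_{\Lambda_n}$ are probability measures, weak convergence to $\delta_{x_\text{can}}$ is equivalent to the following: for every open neighbourhood $U$ of $x_\text{can}$,
\[
\bar{\delta}_{\Lambda_n}(C^\text{Berk}\setminus U)\to 0 .
\]
One direction of this equivalence is the portmanteau theorem. For the other, if this mass condition holds, take a continuous $f$ and $\epsilon>0$, let $U=\{|f-f(x_\text{can})|<\epsilon\}$, and bound $|\int f\,d\bar{\delta}_{\Lambda_n}-f(x_\text{can})|$ by $\epsilon+2\|f\|_\infty\,\bar{\delta}_{\Lambda_n}(C^\text{Berk}\setminus U)$. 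So the whole problem reduces to understanding neighbourhoods of $x_\text{can}$.

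The key input is the fact recalled above: a basis of neighbourhoods of $x_\text{can}$ consists of complements of finitely many Berkovich discs of radius $<1$ in $C^\text{Berk}$. Each such disc lies in a residue class $r^{-1}(\bar{x})$ for a closed point $\bar{x}\in\bar{C}$. I would then compare these intrinsic discs with the extrinsic balls in $\pee^1\times\pee^1$. Fix $\bar{x}\in\bar{C}$. Since $\bar{C}$ is irreducible and $\mathcal{C}$ is semistable, the residue class is either an open unit disc (smooth point) or an open annulus (node). Because $C(\cp)$ is dense in $C^\text{Berk}$ and the measures are supported on $C(\cp)$, it is enough to work with the $\cp$-points.

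For the forward direction, take a closed disc $D$ of radius $\rho<1$ in a residue class. I claim $D(\cp)$ is contained in a ball $B_R(a)\cap C$ of $\pee^1\times\pee^1$ with $R<1$, taken in a suitable affine chart with $a$ the image of the centre. The reason is that the coordinate functions $j_1,j_2$, in a chart where they are integral near $\bar{x}$, restrict to power series with $\oh_p$-coefficients in the local parameter. Such a power series changes by at most $\rho<1$ on $D$. Then $\deg(\Lambda_n|_D)\le\deg(\Lambda_n|_{B})\to 0$ relative to $\deg\Lambda_n$. Summing over the finitely many discs gives $\bar{\delta}_{\Lambda_n}(C^\text{Berk}\setminus U)\to0$.

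For the converse, suppose equidistribution holds and $B$ is a ball of radius $R<1$. I need $\overline{B\cap C}$, with closure taken in $C^\text{Berk}$, to avoid $x_\text{can}$. Then the portmanteau upper bound for closed sets gives $\limsup \bar{\delta}_{\Lambda_n}(B)\le \delta_{x_\text{can}}(\overline{B})=0$. To see this, note that $B\cap C$ lies in the finitely many residue classes $r^{-1}(\bar{x})$ with $\bar{x}\in\bar{C}\cap\bar{B}$. On each, $B\cap C$ is the locus $\{|j_i-a_i|\le R\}$, which is cut out by inequalities $|g|\le R<1$ for analytic functions $g$ on the residue class. At $x_\text{can}$, however, the seminorm of a nonconstant reduction is $1$. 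So the closed set $\{|g|\le R\}$ excludes $x_\text{can}$.

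The main obstacle is the forward comparison when $\bar{x}$ is a node, or when the projection of $\bar{C}$ to a factor is ramified or constant near $\bar{x}$. In that case a single small extrinsic ball may not capture an intrinsic small disc cleanly. I would handle this by covering the intrinsic disc of radius $\rho$ in the annulus by finitely many extrinsic balls of radius $<1$, using that the two coordinates together give a finite map to $\pee^1\times\pee^1$. The alternative would be to allow finitely many balls, which is enough for the limit statement.
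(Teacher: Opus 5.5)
Your proposal is correct and is essentially the paper's argument. In the forward direction, the paper bounds $\bigl|\int g\,d\bar{\delta}_{\Lambda_n}-g(x_\text{can})\bigr|$ by $\epsilon$ plus $2\sup|g|$ times the mass of the finitely many small discs removed from a basic neighbourhood of $x_\text{can}$. In the converse, the paper uses the test function $\phi(|T-a_P|_x)$, which equals $1$ on $B$ and $0$ at $x_\text{can}$; this is precisely your portmanteau bound for the closed set $\{|g|\le R\}$.

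You are more careful than the paper in comparing intrinsic discs, and the closed sub-annuli that must be removed at nodes, with extrinsic balls of radius $<1$; the paper simply asserts this step. Note that this comparison, in both versions, tacitly requires $j_1,j_2$ to extend to a morphism $\mathcal{C}\to\pee^1_{\oh_p}\times\pee^1_{\oh_p}$, which is what your choice of a chart where they are integral near $\bar{x}$ presupposes.
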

	
	\begin{proof}
		The ideas are similar to those in \cite{HMR20}, Lemma 2.3. To test convergence of measures, it suffices to check that for every continuous $g\colon C^\text{Berk}\to \arr$ (automatically bounded as the analytification of a proper scheme is compact) that 
		\[\int g\ d\bar{\delta}_{\Lambda_n}\to \int g\ d\delta_{x_\text{can}}\]
		
		First, control $g$ near $x_\text{can}$: fixing $\epsilon>0$, by continuity of $g$ there exists a neighbourhood $U$ of $x_\text{can}$ such that for $x\in U$, $|g(x)-g(x_\text{can})|<\epsilon$. $U$ may be taken to have form the complement in $C^\text{Berk}$ of finitely many (one-dimensional) discs $\{B_{r_i}(p_i)\}_{i=1}^N$ for some points $p$ in $C$, since such sets form a base. Writing $g(D)$ for the sum of the evaluations of $g$ on points in $D$ (counting with multiplicity), have the estimates	
		
		\begin{align*}
			\left|\int g\ d\bar{\delta}_{\Lambda_n}- \int g\ d\delta_{x_\text{can}}\right| &= \left|\frac{g(\Lambda_n)}{\text{deg}(\Lambda_n)} - g(x_\text{can})\right| \\
			&\leq \left|\frac{g(\Lambda_n|_U)}{\text{deg}(\Lambda_n)} - \frac{g(x_\text{can})\text{deg}(\Lambda_n|_U)}{\text{deg}(\Lambda_n)}\right| + \left|\frac{g(\Lambda_n|_{U^c})}{\text{deg}(\Lambda_n)} - \frac{g(x_\text{can})\text{deg}(\Lambda_n|_{U^c}))}{\text{deg}(\Lambda_n)}\right|		\\
			&\leq \epsilon + 2\sup{g} \sum_{i=1}^N\frac{\text{deg}(\Lambda_n|_{B_{r_i}(p_i)})}{\text{deg}(\Lambda_n)}
		\end{align*}
		where the latter terms go to zero in $n$ by hypothesis (the hypothesis checks this ratio on balls in the larger space $\pee^1\times\pee^1$).
		
		For the converse, suppose $\bar{\delta}_{\Lambda_n}\to\delta_{x_\text{can}}$. Let $B$ be a ball of radius $R<1$ and centre $P$ on $C$. Choose an open affinoid patch $U$ containing both $B$ and $x_\text{can}$. This can be arranged by the above description of a base of neighbourhoods of the canonical point. In a coordinate $T$ on this patch, define $\alpha: U \to \arr$ by $\alpha(x) = x(T-a_P)$, where $a_P\in \cp$ corresponds to $P$ with respect to this coordinate. Then as in \cite{HMR20}, Lemma 2.3, choose $\phi:[0,\infty)\to \arr$ continuous with $\phi([0,R])=1$ and $\phi(1)=0$ and set $F=\phi\circ\alpha$. Then \[0\leq \bar{\delta}_{\Lambda_n}(B)\leq \int F d\bar{\delta}_{\Lambda_n},\] but since $\bar{\delta}_{\Lambda_n}\to\delta_{x_\text{can}}$ and $F(x_\text{can})=\phi(1)=0$, the limit in $n$ of the right hand side is 0 as desired.
	\end{proof}

	In what follows, the results of this section can be applied to $C\subset X(1)\times X(1)$ smooth and with irreducible reduction. To extend this beyond the cases where $\bar{C}$ is irreducible, the limiting measure, if still supported on the skeleton of $C^\text{Berk}$, could be a weighted sum of Dirac measures on the canonical points (of which there is one for each irreducible component of $\bar{C}$), or involve non-discrete measures supported on the edges. If indeed there is equidistribution in these cases then - once the correct limiting measure is identified and an extension of Lemma \ref{padicconditionproof} is proven - the techniques in the following sections will all apply equally well since the proof of Lemma \ref{padicconditionproof} is the only place where irreducibility of $\bar{C}$ is being used.

	\section{The Ordinary Locus} \label{SecOrd}
	
	\subsection{Serre-Tate Coordinates}\label{ST}
	
	The main results stated in this subsection can be found in \textit{Serre-Tate Local Moduli} by Nick Katz.
	
	Fix a finite extension $K/K_0$. Let $\bar{E}$ be an elliptic curve over $\fpbar$, with canonical lift ${\bar{E}}^{\text{can}}$. The collection of lifts of $\bar{E}$ are described by the following theorem:
	
	\begin{thm}
		There is an analytic bijection $\Phi_{\bar{E}}$ with inverse
		\[\{j\in \ok \ | \ j\equiv j(\bar{E})\pmod{\mathfrak{m}_K} \}\xrightarrow{\Phi_{\bar{E}}^{-1}} 1+\mathfrak{m}_K\]
		sending $j({\bar{E}}^{\can})$ to $1\in 1+\mathfrak{m}_K$.
	\end{thm}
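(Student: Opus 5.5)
The plan is to deduce the theorem from Serre–Tate theory of ordinary elliptic curves, as in Katz's \emph{Serre-Tate Local Moduli}. Since $\bar{E}$ is ordinary over $\fpbar$, its $p$-divisible group $\bar{E}[p^\infty]$ is an extension of the étale group $\kew_p/\zed_p$ by the multiplicative group $\mu_{p^\infty}$. Deformations of $\bar{E}$ to an artinian (or complete) local $W(\fpbar)$-algebra $A$ are the same as deformations of $\bar{E}[p^\infty]$, by Serre–Tate. Deformations of $\bar{E}[p^\infty]$ are classified by $\mathrm{Ext}^1(\kew_p/\zed_p,\mu_{p^\infty})$ over $A$. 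This group is $\mathrm{Hom}(T_p\bar{E}(\fpbar),\hat{\mathbb{G}}_m(A))\cong\hat{\mathbb{G}}_m(A)=1+\mathfrak{m}_A$, after fixing a generator of the étale Tate module (and, dually, of $T_p$ of the dual curve). This gives the Serre–Tate parameter $q(E/A)\in 1+\mathfrak{m}_A$. The canonical lift is the split extension, so it has $q=1$.

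First, I pass from artinian rings to $\ok$. The deformation functor is pro-represented by $W(\fpbar)[[t]]$ with $t=q-1$. Taking $A=\ok/\mathfrak{m}_K^n$ and passing to the limit identifies lifts of $\bar{E}$ to $\ok$, up to isomorphism reducing to the identity, with $1+\mathfrak{m}_K$.

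Second, I compare this with $j$. Assume $j(\bar{E})\neq 0,1728$, so there are no extra automorphisms. Then the universal deformation ring is the completed local ring of the $j$-line at $j(\bar{E})$, namely $W(\fpbar)[[j-\tilde{j}]]$ for any lift $\tilde{j}$. The classifying map therefore identifies $W[[t]]\cong W[[j-\tilde j]]$. Hence $j=\Phi(q)$ is a power series in $q-1$ with $W(\fpbar)$-coefficients whose linear coefficient is a unit. Evaluation on $\mathfrak{m}_K$ is then a bijective analytic map onto the residue disc $\{j\equiv j(\bar E)\}$, with analytic inverse given by the inverse power series. Canonical lift goes to $q=1$ by construction. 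For $j(\bar{E})\in\{0,1728\}$ ordinary, the automorphism group $\mu_4$ or $\mu_6$ acts on the deformation space. I would either exclude these points, note that they are covered by the same argument after taking invariants, or check that the source intends bijectivity only up to this finite action. This is the step I expect to need care.

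The main obstacle is the identification of the universal deformation ring with the completed local ring of the $j$-line, that is, étaleness of the moduli map away from extra automorphisms. I would cite this from Katz–Mazur / Katz rather than reprove it, together with the standard fact that canonical lifts exist and are unique.
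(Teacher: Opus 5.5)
Your argument is correct and is the standard Serre--Tate deformation argument from Katz's \emph{Serre-Tate Local Moduli}, which is all the paper offers: it states the result with that citation and gives no proof. Your identification of $W(\fpbar)[[q-1]]$ with the completed local ring $W(\fpbar)[[j-\tilde{j}]]$ of the $j$-line when $\mathrm{Aut}(\bar{E})=\{\pm1\}$, followed by inverting a series with unit linear coefficient, supplies the step the paper leaves implicit. Your caution at ordinary $j(\bar{E})\in\{0,1728\}$ is warranted, but the \emph{taking invariants} option cannot rescue the statement. There $\mathrm{Aut}(\bar{E})/\{\pm1\}$ acts nontrivially on $1+\mathfrak{m}_K$: by $q\mapsto q^{-1}$ at $j=1728$ with $p\equiv 1\pmod 4$, and by $q\mapsto q^{\omega}$ with $\omega\in\zed_p$ a primitive cube root of unity at $j=0$ with $p\equiv 1\pmod 3$. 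So $q\mapsto j$ is generically $2$-to-$1$ or $3$-to-$1$, and the theorem as written genuinely fails there; these points must be excluded, or the bijection stated for deformations (lifts together with an identification of the special fibre) rather than for $j$-values.
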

	
	In particular, the collection of lifts of $\bar{E}$ has a group structure, given by multiplication of units in $\ok$ which are congruent to 1 modulo $\mathfrak{m}_K$.
	
	Having described lifts of a single elliptic curve ${\bar{E}}$, the next result gives information about lifting isogenies between elliptic curves. An isogeny $\bar{f}\colon\bar{E}_1\to \bar{E}_2$ between elliptic curves over \fpbar\ induces a map $\bar{f}[p^\infty]\colon\bar{E}_1[p^\infty](\fpbar) \to\bar{E}_2 [p^\infty](\fpbar)$ on the \'etale parts of their $p$-divisible groups. This is a map between two rank 1 $\zed_p$-modules, so it is given by multiplication by some $p$-adic integer. Denote this number by $a_{\bar{f}}\in \zed_p$. Functoriality of this construction implies that $a_{\bar{f}} a_{\bar{f}^T} =$ deg$(f)$, where $\bar{f}^T$ is the dual of $\bar{f}$. 
	
	\begin{thm}\label{toruscondition}
		Let $\bar{f}\colon\bar{E}_1\to \bar{E}_2$ be an isogeny between elliptic curves over \fpbar. Let $E_1,E_2$ be lifts of $\bar{E}_1, \bar{E}_2$ respectively, and let $q_i = \Phi_{\bar{E}_i}^{-1}(j(E_i))$ be their corresponding Serre-Tate coordinates. Then $\bar{f}$ can be lifted to $f\colon E_1\to E_2$ if and only if 
		\[q_1^{a_{\bar{f}^T}} = q_2^{a_{\bar{f}}}\]
	\end{thm}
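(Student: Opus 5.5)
The plan is to deduce the criterion from Serre--Tate theory: the Serre--Tate coordinate of a lift is the lift's extension class in $\mathrm{Ext}^1(\hat{\mathbb{G}}_m\text{-dual data})$, and lifting a homomorphism of $p$-divisible groups is governed by the compatibility of these extension classes. By the Serre--Tate theorem, lifting $\bar f$ to $f\colon E_1\to E_2$ over $\ok$ is equivalent to lifting $\bar f[p^\infty]$ to a homomorphism $E_1[p^\infty]\to E_2[p^\infty]$ of $p$-divisible groups over $\ok$. For an ordinary lift, $E_i[p^\infty]$ is an extension
\[0\to \hat{\mathbb{G}}_m\otimes T_p\bar E_i^{t}(\fpbar)^\vee\to E_i[p^\infty]\to T_p\bar E_i(\fpbar)\otimes\kew_p/\zed_p\to 0,\]
and Katz's construction identifies the class of this extension with a bilinear pairing
\[q_{E_i}\colon T_p\bar E_i(\fpbar)\times T_p\bar E_i^{t}(\fpbar)\to 1+\mathfrak{m}_K.\]
After fixing generators of the rank one modules, this pairing is the Serre--Tate coordinate $q_i$.

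The key step is Katz's functoriality statement, Theorem 2.1(4) of \emph{Serre--Tate Local Moduli}. It says that $\bar f$ lifts if and only if
\[q_{E_1}(x, \bar f^{t}(y)) = q_{E_2}(\bar f(x), y)\qquad\text{for all } x\in T_p\bar E_1,\ y\in T_p\bar E_2^{t}.\]
One direction comes from pushing out and pulling back extensions: a lift exists exactly when $\bar f_*[E_1[p^\infty]] = (\bar f^{t})^*[E_2[p^\infty]]$ in $\mathrm{Ext}^1$, and this Ext group is $\mathrm{Hom}(T_p\bar E_1\otimes T_p\bar E_2^t,\hat{\mathbb{G}}_m(\ok))$.

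Next I make the pairing explicit. Choose generators $x_i$ of $T_p\bar E_i(\fpbar)$ and identify $\bar E_i^t\cong\bar E_i$ via the principal polarisation, with compatible generators $y_i$, so that $q_i=q_{E_i}(x_i,y_i)$. On étale Tate modules, $\bar f(x_1)=a_{\bar f}x_2$. Under the polarisation identification the dual isogeny $\bar f^{t}$ is $\bar f^T$, so $\bar f^T(y_2)=a_{\bar f^T}y_1$. Bilinearity of $q$, in the sense of $\zed_p$-exponentiation, turns the lifting criterion into
\[q_1^{a_{\bar f^T}}=q_{E_1}(x_1,\bar f^T y_2)=q_{E_2}(\bar f x_1,y_2)=q_2^{a_{\bar f}}.\]
This is the stated condition.

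The main obstacle is the bookkeeping in the last step. I need to check that the étale part of $\bar E^t[p^\infty]$ really is dual to the connected part of $\bar E[p^\infty]$ in the way Katz's pairing uses it. I also need that, under the polarisation, the action of $\bar f^t$ on $T_p\bar E_2^t$ corresponds to the action of $\bar f^T$ on $T_p\bar E_2$ with scalar $a_{\bar f^T}$, rather than its inverse or its $\deg$-twisted version. I would verify this using the relation $a_{\bar f}a_{\bar f^T}=\deg f$ on the connected and étale sides, and by testing the answer on $\bar f=[n]$, where the condition reduces to $q_1^n=q_2^n$ with $E_1=E_2$. Everything else is formal once Katz's theorem is assumed.
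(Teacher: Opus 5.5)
Your proposal is correct and is essentially the paper's route. The paper gives no argument of its own and attributes the statement to Katz's \emph{Serre--Tate Local Moduli}; your derivation from Katz's Theorem 2.1(4), using the principal polarisations to identify $\bar f^t$ on $T_p\bar E_2^t$ with $\bar f^T$ on $T_p\bar E_2$, is the standard unpacking of that citation. Two points are left implicit but are routine: $a_{\bar f}$ and $a_{\bar f^T}$ must be computed with respect to the same generators of $T_p\bar E_i(\fpbar)$ that normalise $\Phi_{\bar E_i}$, which you do arrange, and Katz's Artinian statement passes to $\ok$ by uniqueness of lifts plus a limit and Grothendieck-existence argument.
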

	
	Since the possible endomorphism rings of ordinary elliptic curves can be described (they are orders in imaginary quadratic extensions of \kew\ with conductor prime to $p$), the numbers $a_{\bar{f}}$ which appear can be understood explicitly, see Section \ref{ModularCurves}. From this description, a lift $E$ of $\bar{E}$ has complex multiplication if and only if there exists $\bar{f}\in$ End($\bar{E}$) lifting to an endomorphism of $E$ with the property that $a_{\bar{f}}\ne a_{\bar{f}^T}$. The following well-known result is related to this observation and Theorem \ref{toruscondition}:
	
	\begin{thm}
		The elements of finite order in $1+\mathfrak{m}_K$ correspond under the Serre-Tate bijection to $j$-invariants of elliptic curves over $\ok$ lifting $\bar{E}$ which have complex multiplication.
	\end{thm}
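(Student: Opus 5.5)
The plan is to reduce to Theorem \ref{toruscondition} applied to endomorphisms, i.e. to take $\bar{E}_1=\bar{E}_2=\bar{E}$ and $\bar{f}\in\End(\bar{E})$. Throughout, $\bar{E}$ is ordinary, so $\End(\bar{E})$ is an order $\mathcal{O}$ in an imaginary quadratic field $F$ in which $p$ splits. The \'etale part of $\bar{E}[p^\infty]$ gives a character $\mathcal{O}\to\zed_p$, $\bar{f}\mapsto a_{\bar{f}}$. The dual $\bar{f}^T$ is the complex conjugate of $\bar f$, so $a_{\bar{f}^T}$ is the image of $\bar f$ under the other embedding $F\hookrightarrow\kew_p$, and $a_{\bar f}a_{\bar f^T}=\deg \bar f$.

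For the forward direction, suppose the lift $E$ with Serre-Tate coordinate $q$ has complex multiplication. Then some $\bar{f}\in\End(\bar{E})\setminus\zed$ lifts to $E$, since $\End(E)\hookrightarrow\End(\bar E)$ and the image is not $\zed$. Theorem \ref{toruscondition} with $q_1=q_2=q$ then gives $q^{a_{\bar f^T}-a_{\bar f}}=1$. As $\bar f\notin\zed$, the two embeddings of $\bar f$ differ, so $b\colonequals a_{\bar f^T}-a_{\bar f}$ is a nonzero element of $\zed_p$. Write $b=p^k u$ with $u\in\zed_p^\times$. Raising to $u^{-1}$ is an automorphism of the $\zed_p$-module $1+\mathfrak{m}_K$, so we get $q^{p^k}=1$ and $q$ has finite order.

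For the converse, suppose $q$ has finite order. Torsion in $1+\mathfrak{m}_K$ consists of $p$-power roots of unity, so $q^{p^k}=1$ for some $k$. We want $\bar f\in\End(\bar E)\setminus\zed$ with $p^k$ dividing $a_{\bar f^T}-a_{\bar f}$ in $\zed_p$. Then $q^{a_{\bar f^T}}=q^{a_{\bar f}}$, and Theorem \ref{toruscondition} shows $\bar f$ lifts to $E$, so $\End(E)\supsetneq\zed$ and $E$ has CM.

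To produce such an $\bar f$, take $\bar f = p^k\omega$, where $\omega$ generates $\mathcal{O}$ over $\zed$. Both embeddings of $p^k\omega$ are divisible by $p^k$, so their difference is divisible by $p^k$; and $p^k\omega\notin\zed$. Since the coordinates are taken inside $\ok$ and the deformation is in characteristic $0$, no further integrality issues arise. The value $q=1$, which is the canonical lift, is the case $k=0$ and is consistent with this.

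The main obstacle is justifying the identification of $a_{\bar f^T}$ with the conjugate embedding of $a_{\bar f}$, and hence that $a_{\bar f}\neq a_{\bar f^T}$ whenever $\bar f\notin \zed$. If $a_{\bar f}= a_{\bar f^T}$ for such an $\bar f$, then $a_{\bar f}$ would be a root of a quadratic with negative discriminant lying in $\zed_p$ with both roots equal, forcing $\bar f\in\zed$. To make this rigorous I would use that the Rosati involution corresponds to complex conjugation and that $\bar f+\bar f^T=\operatorname{tr}\bar f$.
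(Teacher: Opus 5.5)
Your argument is correct and takes the route the paper indicates. The paper gives no written proof and cites the result as well known, pointing to Theorem \ref{toruscondition} applied with $E_1=E_2=E$ together with the observation that $E$ has CM exactly when some $\bar f\in\End(\bar E)$ with $a_{\bar f}\neq a_{\bar f^T}$ lifts to $E$; your trace/degree argument for $a_{\bar f}\neq a_{\bar f^T}$ when $\bar f\notin\zed$, and the choice $\bar f=p^k\omega$ for the converse, fill in the details the paper leaves implicit.
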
 
	
	 That is, the natural finite collection of elements of finite order in the group $1+\mathfrak{m}_K$ (the $p$-power roots of unity in the discretely valued field $K$) correspond to CM lifts of $\bar{E}$ under the Serre-Tate bijection $\Phi_{\bar{E}}$.

	The same equations will cut out the same loci after base changing to \cp, hence the calculations of the following sections can be carried out for $C$ defined over $\cp$ and not just a finite extension of $K_0$.

	\subsection{Description of the Image of $Y_0(n)$ In Serre-Tate Coordinates}\label{ModularCurves}
	
	This section will begin a study of $C$ on residue discs around ordinary points. The results here are related to the work of Rutger Noot in his thesis (\cite{Noo96}) and the paper \textit{Models of Shimura Varieties in Mixed Characteristic} (see Theorem 2.8 there), and to the work of Ben Moonen in \cite{Moo98II} (see Theorem 4.5 there).
	
	Suppose now that $C$ is given by an equation $F(j_1,j_2)=0$ in the product of two $j$-lines, with $F\in \oh_p[X,Y]$. Continue to denote by $\Phi_{\bar{E}}$ the analytic function taking a Serre-Tate coordinate $q\in 1+\mathfrak{m}_p$ to the $j$-invariant of the associated lift $E/ \oh_p$ of an ordinary elliptic curve $\bar{E}/\fpbar$, as set up in Section \ref{ST}. $\Phi_{\bar{E}}$ is an isomorphism from $1+\mathfrak{m}_p$ to the $\cp$-points of the open ball $B_1({\bar{E}}^{\text{can}})$. 
	
	Given $\bar{E}_1$, $\bar{E}_2$ and $0<R<1$, let $q_1,q_2$ be their respective Serre-Tate coordinates, each running over $B_R(1)$. Set $s_i = q_i-1$, with ranges $|s_i|<R$.

	Given $\bar{E}_1$ and $\bar{E}_2$, define
		\[\Lambda_{n,R} = \{(q_1,q_2)\in B_R((1,1)) \ | \ (\Phi_{\bar{E}_1}(q_1),\Phi_{\bar{E}_2}(q_2))\in Y_0(n)\},\]
	the locus of points $(q_1,q_2)\in B_R((1,1))$ corresponding to lifts of $E_1$ and $E_2$ which lie on $Y_0(n)$. Then the numerator in the limit in Equation (\ref{Limit}) can be bounded by counting the number of solutions to the equation $F(j_1,j_2)=0$ in the locus $\Lambda_{n,R}$. This section is devoted to a description of $\Lambda_{n,R}$. Intersections with $C$ will be considered in the next section.

	\begin{lemma}\label{listoftori}
		For fixed ordinary $\bar{E}_1$,$\bar{E}_2$, there is a constant $C_1$, independent of $n$, such that $\Lambda_{n,R}$ is a union of translates of at most $C_1\sqrt{n}$ tori by $p$-power roots of unity which lie in $B_R(1)$.
	\end{lemma}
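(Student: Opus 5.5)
Since $\bar{E}_1,\bar{E}_2$ are fixed ordinary curves, the idea is to reduce the description of $\Lambda_{n,R}$ to a count of cyclic $n$-isogenies $\bar{E}_1\to\bar{E}_2$ over $\fpbar$, and then translate each such isogeny into a torus equation via Theorem \ref{toruscondition}. A point $(q_1,q_2)\in B_R((1,1))$ lies in $\Lambda_{n,R}$ exactly when the corresponding lifts $E_1,E_2$ admit a cyclic $n$-isogeny $f\colon E_1\to E_2$. Reducing modulo $\mathfrak{m}_p$ gives an isogeny $\bar f\colon\bar E_1\to\bar E_2$ of degree $n$. Conversely, any lift of such an $\bar f$ is an $n$-isogeny between the lifts. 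So
\[\Lambda_{n,R}=\bigcup_{\bar f}\{(q_1,q_2)\in B_R((1,1)) : q_1^{a_{\bar f^T}}=q_2^{a_{\bar f}}\},\]
where $\bar f$ ranges over the isogenies of degree $n$ that arise as reductions of cyclic ones. Including a few extra non-cyclic $\bar f$ only enlarges the union, so I would handle cyclicity by restricting to $\bar f$ that are cyclic, or that reduce from cyclic lifts, and note that this causes no harm for an upper bound.

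Next, describe each locus $q_1^{a}=q_2^{b}$ with $a=a_{\bar f^T}$, $b=a_{\bar f}\in\zed_p$ and $ab=n$. Write $a=p^\alpha u$ and $b=p^\beta v$ with $u,v\in\zed_p^\times$. Because $\bar E_i$ is ordinary, a degree-$p$ isogeny is either Frobenius or Verschiebung, so exactly one of $a,b$ absorbs each factor of $p$. On $1+\mathfrak{m}_p$, raising to a unit $p$-adic power is a bijection, so the equation becomes $q_1^{p^\alpha}=q_2^{p^\beta c}$ with $c=v/u$. If $\alpha\le\beta$, this is equivalent to $q_1=\zeta q_2^{p^{\beta-\alpha}c}$ for some $\zeta\in\mu_{p^\alpha}$; the symmetric statement holds when $\alpha\ge\beta$. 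Each $\bar f$ therefore contributes a union of translates of the torus $\{q_1=q_2^{p^{\beta-\alpha}c}\}$ by $p$-power roots of unity, and only those translates that meet $B_R(1)$ matter. This is the form the lemma claims.

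It remains to bound the number of distinct tori by $C_1\sqrt n$, and I expect this to be the main step. Since $\bar E_1$ and $\bar E_2$ are isogenous (otherwise $\Lambda_{n,R}$ is empty), $\mathrm{Hom}(\bar E_1,\bar E_2)$ is a rank-2 lattice with the degree as a positive definite binary quadratic form, since it is isomorphic to an ideal in the imaginary quadratic order $\End(\bar E_1)$. The number of vectors of norm exactly $n$ in a rank-2 lattice is $O(n^\epsilon)$, which already gives the claim. The cruder bound $O(n)$ for vectors of norm at most $n$ is not enough. To obtain $\sqrt n$ with a uniform constant, I would use the count of representations of $n$ by a fixed binary form: writing the form as $Q(x,y)$, for each of the $O(\sqrt n)$ admissible values of $y$ there are at most two values of $x$. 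This yields at most $C_1\sqrt n$ isogenies $\bar f$ of degree $n$, with $C_1$ depending only on the lattice, hence only on $\bar E_1,\bar E_2$. Each isogeny determines one torus, with exponents $a_{\bar f},a_{\bar f^T}$, up to root-of-unity translates. Distinct $\bar f$ may give the same torus, which again only helps the bound. The one remaining point to check is that $a_{\bar f}$ is well defined as a $p$-adic integer through the identification of each étale Tate module with $\zed_p$. Fixing these identifications once, compatibly with canonical lifts, makes the exponents well defined up to a common unit, and this does not change the torus.
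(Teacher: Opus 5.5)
Your proposal is correct and is essentially the paper's proof. The paper also embeds $\mathrm{Hom}(\bar E_1,\bar E_2)$ as an ideal of $\End(\bar E_2)$ and bounds the degree-$n$ isogenies by counting solutions of $a^2+b^2d=mn$, with $O(\sqrt{n})$ choices of $b$, each fixing $a$ up to sign. It then applies Theorem~\ref{toruscondition} to each $\bar f$ and extracts the smaller $p$-power, which produces translates by $p$-power roots of unity.

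Two small points need tightening. First, to see that the relevant $\zeta$ lie in $B_R(1)$, note that $|(1+s)^c-1|\le|s|$ for $c\in\zed_p$, so $\zeta=q_1q_2^{-c}\in B_R(1)$. Second, for cyclicity, take the union over \emph{all} degree-$n$ isogenies $\bar f$, not only those with cyclic kernel. This gives a superset, which is what the paper does and all that Theorem~\ref{ThmOrd} needs. Restricting to $\bar f$ with cyclic kernel could lose points: for example, when $\bar E_1=\bar E_2$, on the translate $q_2=\zeta_pq_1$ with $\zeta_p\ne1$, the lift of $[p]$ is a cyclic $p^2$-isogeny.
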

	
	\begin{proof}
		If $\bar{E_1}$ and $\bar{E_2}$ are not isogenous then $\Lambda_{n,R}$ will be empty. So assume they are isogenous and fix $g\colon\bar{E_1}\to \bar{E_2}$ an isogeny of minimal degree $m$, say. Then $\bar{E_1}\cong \bar{E}_2/$ker$g^T$, where $g^T\colon\bar{E}_2\to\bar{E}_1$ denotes the dual of $g$. There is a map Hom$(\bar{E}_1,\bar{E}_2)\to$ End$(\bar{E}_2)$ given by $h\mapsto h\circ g^T$.
		
		Since non-zero isogenies are surjective, this map is an injection. The image of this map is \[\{f\in \text{End}(\bar{E_2}) \ | \text{ker}(f)\supset \text{ker}(g^T)\},\] a left ideal in End$(\bar{E}_2)$. Since $\bar{E}_2$ is ordinary, its endomorphism ring is an order in the ring of integers of an imaginary quadratic field, say $\kew(\sqrt{-d})$ for $d>0$ depending on $\bar{E}_2$. By ordinariness again, $\kew(\sqrt{-d})$ can be viewed as a subfield of $\kew_p$.

		Recall that the degree, or size of kernel, of the isogeny corresponding to $a+b\sqrt{-d}\in$ End$(\bar{E}_2)$ is given by its norm as an algebraic number, which is $a^2+b^2d$. So if $h\circ g^T = a+b\sqrt{-d}$ as elements of End$(E_2)$,
		\[\text{deg}(h\colon\bar{E}_1\to\bar{E}_2) = \frac{N(a+b\sqrt{-d})}{|\text{ker}(g^T)|} = \frac{a^2+b^2d}{m}.\]
		Thus the number of $n$-isogenies between $\bar{E_1}$ and $\bar{E_2}$ is bounded by the number of solutions in half integers $a,b$ to $a^2+b^2d=mn$. Since $|b|\leq \sqrt{mn}$ and determines $a$ up to sign, there can be at most $4\sqrt{mn}+2\leq C_1\sqrt{n}$ solutions, for a positive constant $C_1$ ($m$ is fixed once the ball is fixed). So $\Lambda_{n,R}$ can be written as the union over at most $C_1\sqrt{n}$ $n$-isogenies $\bar{f}$ of the sets \[\Lambda_{n,R,\bar{f}} = \{(E_1,E_2) \ | \text{ there exists } f\colon E_1\to E_2 \text{ reducing to } \bar{f}\colon\bar{E}_1\to\bar{E}_2\}.\]

		Fix $n$ and let $q_1=\Phi_{\bar{E}_1}^{-1}(j(E_1)),\ q_2=\Phi_{\bar{E}_2}^{-1}(j(E_2))\in 1+\mathfrak{m}_p$ be the Serre-Tate coordinates of lifts $E_1$, $E_2$ of $\bar{E_1}$, $\bar{E_2}$ respectively. By Theorem \ref{toruscondition}, pairs $(E_1,E_2)$ lifting a fixed such $n$-isogeny $\bar{f}\colon\bar{E}_1\to\bar{E}_2$ are exactly those satisfying $q_1^{n/a_{\bar{f}}} = q_2^{a_{\bar{f}}}$. 
		
		If $(n,p)=1$, $a_{\bar{f}}\in \zed_p^\times$ and so this condition is equivalent to $q_2 = q_1^{n/a_{\bar{f}}^2}$. Thus $\Lambda_{n,R,\bar{f}}$ is exactly a torus.
		
		If $(n,p)\ne 1$, noting first that $\bar{f}$ lifts to $E_1\to E_2$ if and only if $\bar{f}^T$ lifts to $E_2\to E_1$, assume that $f$ is such that $v_p(a_{\bar{f}})\leq v_p(\frac{n}{a_{\bar{f}}})$. Writing $t=v_p(a_{\bar{f}})$, have \[q_1^{n/a_{\bar{f}}} = q_2^{a_{\bar{f}}}\Leftrightarrow \text{ there exists a }(p^t)^\text{th} \text{ root of unity } \zeta \text{ such that } q_2= \zeta q_1^{n/(a_{\bar{f}}^2)},\] with $n/(a_{\bar{f}}^2)\in\zed_p$. That is, the locus $\Lambda_{n,R,\bar{f}}$ is a collection of cosets of a fixed torus indexed by $p$-power roots of unity. But recall that $B_R(1)$ is a closed subgroup of $1+\mathfrak{m}_p$, so if $q_1, q_2$ lie in $B_R(1)$, then so do any limits of integral powers of $q_1$ and $q_2$. Hence \[\zeta = q_2q_1^{-n/(a_{\bar{f}}^2)}\] lies in $B_R(1)$ as desired.
	\end{proof}

	In particular, the number of $p$-power roots of unity in any ball $B_R(1)$ is finite. Denoting this number by $u_R$, there are at most $C_1u_R\sqrt{n}$ torus translates comprising $\Lambda_{n,R}$ for some constant $C_1$ independent of $n$.

	\begin{rem} \label{CModular}
		The assumption throughout (including in what follows) is that $C$ is not a Shimura subvariety of $X(1)\times X(1)$. However if $C=Y_0(m)$, say, then within the ordinary locus, intersection points of $\Lambda_{n,R}$ with $C$, for $n>m$ squarefree, are given by simultaneously solving equations of the form $q_2 = q_1^r$ and $q_2=q_1^s$ with $r\ne s\in \zed_p$. Hence solutions are roots of $q_1^{r-s}=1$, i.e. parametrised by roots of unity. So the number of intersections of any given pair is at most $u_R$, whence
			\[\lim_{n\to\infty \text{ sqfree}}\left|\frac{\text{deg}(\Lambda_{n}|_B)}{\text{deg}(\Lambda_{n})}\right|\leq \lim_{n\to\infty}\frac{C_1\sqrt{n} u_R}{n}=0\]
			
		The same does not work for the supersingular locus. For instance, if $C=Y_0(1)$ is the diagonal, fix a supersingular elliptic curve $\bar{E}$ whose endomorphism ring is a maximal order. Then computing $|Y_0(1)\cap Y_0(l)|$ for prime numbers $l\ne p$ in the residue disc over $\bar{E}$ is to ask how many lifts of $\bar{E}$ have self $l$-isogenies. 
		Deuring's Lifting Theorem says that, given a rank 2 integrally closed subalgebra $\oh$ of End$(\bar{E})$, there exists a lift of $\bar{E}$ to $W(\fpbar)$ with endomorphisms by $\oh$ (see Proposition 2.3 in \cite{OS19}). Fixing \oh, there can be at most six elements of norm $l$ in $\oh$ (corresponding to the number of units in the quadratic imaginary field containing \oh). Hence there are as many intersection points as one-sixth of the number of elements in the set
		\[\{(a,b,c,d)\in \zed^4 \ | \ a^2+b^2+pc^2+pd^2 = l\},\] (that is, the set of elements of norm $l$ in the quaternion algebra End$(\bar{E})$). The size of this set is linear in $l$, and deg$Y_0(l)=l+1$
		 Hence 
		\[\limsup_{\text{primes }l}\frac{|Y_0(1)\cap(Y_0(l))\cap B_{1/p^{1/2}}(\tilde{E},\tilde{E})|}{\text{deg}(Y_0(1))\text{deg}(Y_0(l))}>0\] and equidistribution fails by the converse to Lemma \ref{padicconditionproof} for the curve $Y_0(1)$.
	\end{rem}

	\subsection{Counting Intersections with a Fixed Torus} \label{FinishProof}
	
	Again, in this section, only the behaviour of $C$ within in the ordinary locus will be studied.
	
		After changing coordinates to Serre-Tate coordinates, the equation for $C$ will be given by the vanishing of a two-variable power series (rather than a polynomial). One of the coordinates of a point of intersection with a torus should be given by a root of a 1-variable power series, with the other coordinate calculated by referring back to the equation cutting out the torus. The main tool for counting roots of power series will be the following Theorem, whose proof can be found in Section 6 of \cite{Gou93}:
	
	\begin{thm} (Strassman's Theorem) \label{Strass}
		Let $f(X) = \sum_{n\geq 0} a_nX^n$ be a non-zero power series with coefficients $a_n\in\oh_p$ satisfying $|a_n|\to 0$ as $n\to \infty$. Let $N$ be maximal with the property that $|a_N|\geq |a_n|$ for all $n\geq 0$ ($N$ exists since $|a_n|\to 0$ but not all are zero). Then there exist exactly $N$ roots $x\in \oh_p$ of $f(x)=0$ (counted with multiplicity).
	\end{thm}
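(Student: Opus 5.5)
The plan is to reduce to a polynomial statement via the $p$-adic Weierstrass preparation theorem for the Tate algebra $\cp\langle X\rangle$ of power series converging on $\oh_p$, and then use that $\cp$ is algebraically closed. First normalise: dividing by $a_N$ (which is nonzero, since $f\neq 0$ and $|a_N|$ is maximal) changes neither the roots nor $N$. So we may assume $|a_N|=1$, $|a_n|\leq 1$ for all $n$, and $|a_n|<1$ for $n>N$. Then $f\in\oh_p\langle X\rangle$, and its reduction $\bar f\in\fpbar[X]$ is a polynomial of exact degree $N$.

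The key step is a division algorithm: every $h\in\oh_p\langle X\rangle$ can be written uniquely as $h=qf+r$ with $q\in\oh_p\langle X\rangle$ and $r\in\oh_p[X]$ of degree $<N$. I would prove this by successive approximation. Write $f=P+\varepsilon$, where $P=\sum_{n\leq N}a_nX^n$ and $\varepsilon=\sum_{n>N}a_nX^n$, so that $\|\varepsilon\|<1$ in the Gauss norm. Euclidean division by $P$, whose leading coefficient is a unit, does not increase the Gauss norm. Iterating ``divide by $P$, then correct by the error term $\varepsilon q$'' produces corrections whose norms shrink geometrically, by the factor $\sup_{n>N}|a_n|<1$. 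Since $|a_n|\to 0$, this supremum is attained and is genuinely $<1$, and $\oh_p\langle X\rangle$ is complete, so the corrections converge. I expect this convergence estimate to be the main, if routine, obstacle.

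Applying the division to $h=X^N$ gives $X^N=qf+r$. Set $g=X^N-r$, a monic polynomial of degree $N$ with coefficients in $\oh_p$. Reducing mod $\mathfrak m_p$ shows $\bar q\bar f=X^N-\bar r$. Since $\bar f$ has degree $N$, $\bar q$ is a nonzero constant, so $q=c+(\text{terms of norm}<1)$ with $c\in\oh_p^\times$. Such a $q$ is a unit in $\oh_p\langle X\rangle$, because its inverse is given by a convergent geometric series. Hence $f=q^{-1}g$, where $u=q^{-1}$ satisfies $|u(x)|=1$ for every $x\in\oh_p$ and so has no zeros there.

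Finally, the zeros of $f$ in $\oh_p$ are exactly the zeros of $g$ in $\oh_p$, with the same multiplicities, since $u$ is a unit near each point and multiplicity can be read off from local power series expansions. The polynomial $g$ is monic with integral coefficients, and $\cp$ is algebraically closed, so $g$ has exactly $N$ roots in $\cp$ counted with multiplicity. All of these lie in $\oh_p$: if $|x|>1$, then $|x^N|$ strictly dominates every other term of $g(x)$. This gives exactly $N$ roots of $f$ in $\oh_p$. Over a non-algebraically-closed field, the same argument would give only ``at most $N$''.
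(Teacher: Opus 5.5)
Your proof is correct and complete. You normalise to $|a_N|=1$, prove Weierstrass division by $f$ via successive approximation, divide $X^N$ to get $f=u\cdot g$ with $u$ a unit of $\oh_p\langle X\rangle$ and $g$ monic of degree $N$, and then count the roots of $g$ using that $\cp$ is algebraically closed and that all its roots lie in $\oh_p$.

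The paper gives no proof of its own and simply cites Gouv\^ea. There the \emph{exactly} $N$ version over $\cp$ is likewise deduced from the $p$-adic Weierstrass preparation theorem, so your route is essentially the standard one the paper relies on. You are also right that without algebraic closedness only the upper bound of at most $N$ roots survives.
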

	
	To apply the Theorem, it is necessary to carefully convert the problem into finding zeros of a single variable power series which converges on $B_1(0)$. A warning sign (see \cite{Gou93}, discussion after Theorem 4.3.3) is that when composing the power series $f(X) =\exp(X)$ and $g(X)=(2X^2-2X)$, the composite function $f\circ g$ gives a different answer with input $1$ from the evaluation of the formal composition of power series at $X=1$.

	For $F(X,Y)=\sum_{i\geq1} a_{ij}X^iY^j\in K[[X,Y]]$, and $\epsilon>0$, say that $f$ is \textit{$\epsilon$-convergent} if \[\lim_{i+j\to\infty}\epsilon^{i+j}|a_{ij}|\to 0\] (i.e. any non-repeating sequence of the real numbers $\epsilon^{i+j}|a_{ij}|$ has limit zero). For such $F$, define \[||F||_{\epsilon} = \max_{i,j} \epsilon^{i+j}|a_{ij}|.\] The same definitions also make sense for power series in one variable (they are just two variable power series which have $a_{ij}=0$ if $j\ne0$). It is a fact (Proposition 5.3 in \cite{Sch11}) that if $f,g$ are $\epsilon$-convergent, then $||fg||_\epsilon=||f||_\epsilon||g||_\epsilon$. The following Proposition is useful for understanding when na\"ive compositions of power series converge; it is Proposition 5.4 in \cite{Sch11}.

	\begin{prop} \label{epsdel}
		Let $F \in K[[X,Y]]$, $H_1, H_2\in K[[T]]$ and suppose $F$ is $\epsilon$-convergent and $H_1,H_2$ are $\delta$-convergent for some $\delta$ with $\max\{||H_1||_\delta,||H_2||_\delta\}\leq\epsilon$. Then for any $s,t\in K$ with $|s|,|t|\leq \delta$, the composite power series $F(H_1(S),H_2(T))$ converges at $(S,T)=(s,t)$ to the value of the composition of functions $F(H_1(s),H_2(t))$.
	\end{prop}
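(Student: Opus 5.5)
We need to prove Proposition \ref{epsdel}: if $F$ is $\epsilon$-convergent and $H_1,H_2$ are $\delta$-convergent with $\max\{\|H_1\|_\delta,\|H_2\|_\delta\}\le\epsilon$, then for $|s|,|t|\le\delta$ the formal composite $F(H_1(S),H_2(T))$ converges at $(s,t)$ to $F(H_1(s),H_2(t))$.

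The approach is to show that the double series obtained by expanding every monomial of $F$ is absolutely summable in the non-Archimedean sense. Once that is known, it can be summed in any order: grouping by monomials in $S,T$ gives the formal composite evaluated at $(s,t)$, while grouping by the indices of $F$ gives the function composition. The standing convention that $H_1,H_2$ have no constant term (so that the formal composite makes sense) will be kept. The argument is carried out uniformly over $|s|,|t|\le\delta$.

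\textbf{Step 1 (norms of powers).} Multiplicativity of $\|\cdot\|_\delta$ (Proposition 5.3 in \cite{Sch11}) gives
\[\|H_1^iH_2^j\|_\delta=\|H_1\|_\delta^i\|H_2\|_\delta^j\le\epsilon^{i+j}.\]
Write $H_1(S)^iH_2(T)^j=\sum_{k,l}c^{(ij)}_{kl}S^kT^l$. Then every coefficient satisfies $|c^{(ij)}_{kl}|\delta^{k+l}\le\epsilon^{i+j}$. Moreover, for fixed $(i,j)$ the product series is itself $\delta$-convergent, so $|c^{(ij)}_{kl}|\delta^{k+l}\to0$ as $k+l\to\infty$.

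\textbf{Step 2 (summability of the double family).} Consider the family of terms $a_{ij}c^{(ij)}_{kl}s^kt^l$ indexed by $(i,j,k,l)$. Each term has absolute value at most
\[|a_{ij}|\,|c^{(ij)}_{kl}|\,\delta^{k+l}\le|a_{ij}|\,\epsilon^{i+j}.\]
Fix $\eta>0$. By $\epsilon$-convergence of $F$, only finitely many $(i,j)$ have $|a_{ij}|\epsilon^{i+j}\ge\eta$. For each of those finitely many pairs, Step 1 shows that only finitely many $(k,l)$ give a term of size at least $\eta$. Hence the family tends to $0$ along the cofinite filter.

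In a complete non-Archimedean field, such a family is unconditionally summable, and the sum is independent of the grouping. This is the main point, and it is where the non-Archimedean setting does all the work: no Fubini-type estimate beyond "terms tend to zero" is needed.

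\textbf{Step 3 (identify the two groupings).} First, group by $(k,l)$. For each fixed $(k,l)$, the inner sum $\sum_{i,j}a_{ij}c^{(ij)}_{kl}$ converges and equals the $(k,l)$ coefficient of the formal composite. Because $H_1,H_2$ have no constant term, only finitely many $(i,j)$ actually contribute to a given $(k,l)$, so this coefficient is well defined formally. Summing over $(k,l)$ therefore gives the value of the formal composite series at $(s,t)$.

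Second, group by $(i,j)$. The inner sum over $(k,l)$ equals $H_1(s)^iH_2(t)^j$, since the product series converges at $(s,t)$ to the product of the values. Next, $|H_m(s)|\le\|H_m\|_\delta\le\epsilon$ for $m=1,2$, so $F$, being $\epsilon$-convergent, converges at $(H_1(s),H_2(t))$. Summing over $(i,j)$ therefore yields $F(H_1(s),H_2(t))$.

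Equality of the two groupings, guaranteed by Step 2, proves the proposition.
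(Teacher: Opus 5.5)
Your proof is correct. The paper gives no argument of its own here, only the citation to \cite{Sch11}, Proposition 5.4. The standard argument there is the Banach-algebra version of your estimate. The $\delta$-convergent series form a Banach algebra $\mathcal{F}_\delta$ under $\|\cdot\|_\delta$, and the bound $\|a_{ij}H_1^iH_2^j\|_\delta\le|a_{ij}|\epsilon^{i+j}\to0$ makes $\sum_{i,j}a_{ij}H_1^iH_2^j$ converge in $\mathcal{F}_\delta$. Evaluation at $(s,t)$ with $|s|,|t|\le\delta$ (a ring homomorphism of norm at most $1$) and extraction of a fixed coefficient are continuous linear functionals, so both commute with the sum. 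Your four-index summable family is the coordinate-level unpacking of this: Step 2 is convergence in $\mathcal{F}_\delta$, and your two groupings in Step 3 are the two continuity statements.

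The Banach-algebra phrasing also gives, for free, that the composite is itself $\delta$-convergent with $\|F(H_1,H_2)\|_\delta\le\|F\|_\epsilon$. This is what the paper actually invokes immediately afterwards (``$F_\zeta$ is $\epsilon$-convergent for any $\epsilon<1$''). Your Step 2 gives this too, and you should say so explicitly. The reason is that the cofinite-filter estimate does not depend on $(s,t)$, and $|d_{kl}|\delta^{k+l}\le\max_{i,j}|a_{ij}c^{(ij)}_{kl}|\delta^{k+l}$, where $d_{kl}$ is the $(k,l)$-coefficient of the composite.

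Finally, your convention that $H_1,H_2$ have no constant term is not part of the statement. The paper's first application does not satisfy it: there the $H_i$ are the Serre--Tate-to-$j$ coordinate changes, whose constant terms are $j$-invariants of CM lifts and in general non-zero. This is harmless there, since $F$ is a polynomial. You can drop the convention entirely. Step 1 gives $|a_{ij}c^{(ij)}_{kl}|\le|a_{ij}|\epsilon^{i+j}\delta^{-(k+l)}\to0$ as $i+j\to\infty$ for fixed $(k,l)$. So the coefficients of the composite are convergent sums, and Step 3 goes through unchanged.
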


	Let $F(J_1,J_2)\in \oh_p[J_1,J_2]$ be the polynomial cutting out $C$. Suppose that intersections of $C$ with the locus $q_1^a = q_2^b$ is being considered. By symmetry, may assume that $v_p(a)-v_p(b)=e\geq 0$. That is, the locus is given by equations of the form $q_2 = \zeta q_1^{a/b}$ for some $(p^e)^\text{th}$ root of unity $\zeta$. Recentre by setting $q_2=\zeta+s_2$ and $q_1 = 1+s_1$, so that the coordinates $s_1,s_2$ both run over $B_1(0)$. 
	
	Fix a point $P=(j_1,j_2)$ on $C$ where both coordinates have ordinary reduction. Let $J_1 = H_1(S_1)$, $J_2 = H_2(S_2)$ be the analytic change of coordinates into Serre-Tate coordinates (shifted as above to be centred at zero) for the residue disc of $P$. Substituting in, recover a power series $F_\zeta(S_1,S_2)=0$ cutting out $C$ in the residue disc of $P$. Since $F$ itself is a polynomial, it is $\epsilon$-convergent for every $\epsilon>0$. As $H_1$ and $H_2$ have integral coefficients, they are both $\delta$-convergent for any $\delta<1$ and $||H_1||_1$ and $||H_2||_1$ are at most 1. Thus by Proposition \ref{epsdel}, the composition of series converges as would be expected for inputs $|s_1|,|s_2|< 1$. In particular, $F_\zeta$ is $\epsilon$-convergent for any $\epsilon<1$.
	
	In these coordinates, the torus $q_2 = \zeta q_1^{a/b}$ becomes
	\[s_2 = \zeta(1+s_1)^{a/b} -\zeta = \zeta \frac{a}{b} s_1 + \zeta \binom{a/b}{2}s_1^2 + ...,\]
	a power series with zero constant term. So, changing coordinates by translation to $S_1= Q_1-1$, $S_2 = Q_2-\zeta$, it is reasonable to substitute in $S_2 =  \zeta \frac{a}{b} S_1 + \zeta \binom{a/b}{2}S_1^2 + ... $ into the equation for $C$. This is to compute the composition $F_\zeta(G_1(S_1),G_2(S_1))$ where $G_1(S_1) = S_1$ and $G_2(S_1)=\zeta \frac{a}{b} S_1 + \zeta \binom{a/b}{2}S_1^2 + ...$. Given $R<1$, to solve for roots within the ball of radius $R$ is to be able to evaluate this power series on values $|s|<R$. Define $\epsilon=\max\{||G_1||_R,||G_2||_R\}<1$, then $F_\zeta$ is $\epsilon$-convergent. So by Proposition \ref{epsdel}, all convergence is as expected for $|s|<R$. 
	
	The upshot of this discussion is that power series and changes of coordinates can be composed freely in a na\"ive way below and the roots will correspond to meaningful solutions to questions about intersections of $C$ with torus translates.

	The aim is now to provide some uniform bounds on the possible $N$ (as in the statement of Strassman's Theorem) appearing from a family of power series $f$ of the specific form constructed above. The roots of these single variable equations will be studied in the following Lemma. Fix $R<1$ and $\zeta\in B_R(1)$ a $(p^e)^\text{th}$-root of unity. Let $F_\zeta(S_1,S_2)\in \oh_p[[S_1,S_2]]$ be the power series from above centred at $(1,\zeta)$ in Serre-Tate coordinates.

	\begin{lemma} \label{singletorus}
		 There exists $N=N(F_\zeta,R)$ such that for any $a,b\in \zed_p$ with $v_p(a)-v_p(b)=e$, the number of points in the vanishing locus $\mathbb{V}(F_\zeta)\subset \cp^2$ which lie both on the torus translate $(1+s_1)^{a}=(1+s_2)^b$ and in the ball $B_R(0)$ is at most $N$.
	\end{lemma}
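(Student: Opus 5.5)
Set $c=a/b$ when $v_p(b)\le v_p(a)$; otherwise swap the roles of the coordinates. The torus translates through the ball are then the graphs $s_2=\zeta(1+s_1)^{c}-\zeta$ with $c\in p^e\zed_p$, or the symmetric version. The plan is to reduce the count to Strassman's Theorem applied to the one-variable series $f_c(S)=F_\zeta(S,G_c(S))$, where $G_c(S)=\zeta\sum_{k\ge1}\binom{c}{k}S^k$, and to bound the Strassman index uniformly in $c$. The binomial coefficients $\binom{c}{k}$ lie in $\zed_p$ for $c\in\zed_p$, so $G_c$ has integral coefficients and $\|G_c\|_R\le R<1$ for every $c$. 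By the discussion preceding the lemma, Proposition \ref{epsdel} then lets us compose naively for $|s|<R$. Every point of $\mathbb{V}(F_\zeta)$ on the torus translate in $B_R(0)$ gives a distinct root $s_1$ of $f_c$ with $|s_1|<R$, since $s_2$ is determined by $s_1$.

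To count roots in the disc $|s|<R$ rather than in $\oh_p$, rescale: pick $\pi\in\cp$ with $|\pi|=R$ (or slightly larger, still $<1$), and consider $g_c(X)=f_c(\pi X)/\lambda_c$, where $\lambda_c$ is the largest coefficient. By Strassman's Theorem, the number of roots with $|x|\le1$ is $N_c$, the last index at which $|a_k(c)|R^k$ attains its maximum $\|f_c\|_R$. So it suffices to find $N$ with $N_c\le N$ for all $c\in p^e\zed_p$.

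The key step is uniformity. The coefficients $a_k(c)$ of $f_c$ are continuous functions of $c\in\zed_p$, in fact polynomials in $c$ with coefficients in $\cp$. The tail satisfies $|a_k(c)|R^k\le\|F_\zeta\|_{R'}(R/R')^k$, uniformly in $c$, for any fixed $R<R'<1$, because $F_\zeta$ is $R'$-convergent and $\|G_c\|_{R}\le R$. So it is enough to show that $\inf_{c\in p^e\zed_p}\|f_c\|_R>0$. Granting this, the tail bound forces $N_c$ below some $N$ independent of $c$.

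The main obstacle is positivity of this infimum. The function $c\mapsto\|f_c\|_R$ is continuous on the compact set $p^e\zed_p$, so it suffices that $f_c\not\equiv0$ for each single $c$. In other words, no torus translate $q_2=\zeta q_1^{c}$ is contained in $C$ near the point. I would argue this from the standing assumption that $C$ is not a Shimura subvariety. If $f_c\equiv0$, then the analytic germ of the torus translate lies on the algebraic curve $C$, so infinitely many points of the form $q_1=\zeta'$ (a $p$-power root of unity), or of the form $q_2=\zeta q_1^{c}$ with $c$ approximated by exponents of modular tori, would lie on $C$. Concretely, the torus translate contains the point $(1,\zeta)$, which is a special point (both coordinates CM). An analytic irreducible component of $C$ containing a torus translate through a CM point should force $C$ to be $Y_0(n)$ or a CM-fibre. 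To make this rigorous, I would either invoke the description of modular tori, taking $c$ rational and using that torus translates with irrational $c$ contain no further special points, or, more simply, use that the torus translate contains the infinitely many points $(\zeta_1,\zeta\zeta_1^{c})$ with $\zeta_1$ a $p$-power root of unity. These are CM pairs, contradicting the finiteness of special points on $C$ \cite{And98}. This last argument is the one I would try first, since it needs only that finiteness result.
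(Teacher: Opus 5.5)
Your proposal is correct and follows essentially the paper's argument. The coefficients of $F_\zeta(S,G_c(S))$ depend continuously on the exponent, compactness of $\zed_p$ gives a uniform lower bound on the dominant coefficient while the tail is uniformly small, and Strassman's Theorem then bounds the number of roots uniformly; you simply replace the paper's case split around the roots $r_1,\dots,r_M$ of a fixed coefficient polynomial with the single positive continuous function $c\mapsto\|f_c\|_R$.

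Your proof also justifies a step the paper only asserts, namely that $\mathbb{V}(F_\zeta)$ contains no torus translate. Your argument for it is valid: any such translate in the full residue disc carries the infinitely many CM pairs $(\zeta_1,\zeta\zeta_1^{c})$ with $\zeta_1$ a $p$-power root of unity, contradicting Andr\'e's finiteness of special points.
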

	
	\begin{proof}
		The torus equation can be rewritten $s_2 = \zeta(1+s_1)^r-1$ for $r = \frac{a}{b}\in \zed_p$ and some root of unity $\zeta$ with $|1-\zeta|<R$.
		
		Write $F_\zeta(S_1,S_2) = a_{00}+\sum_{i+j\geq 1}a_{ij}S_1^iS_2^j$, and substitute in $S_2 = \zeta rS_1 + \zeta\binom{r}{2} S_1^2 + ...$ to get a power series in $S_1$ whose coefficients are all polynomials in $r$ with integral coefficients:
		\[F_\zeta(S_1,\zeta(1+S_1)^r-1) = c_0 + c_1(r)S_1 + c_2(r) S_1^2 + ... .\]		
		$F_\zeta$ is not the zero series, so there exists $m$ such that $c_m(r)$ is not the zero polynomial. Let $r_{1},...,r_M$ be the roots of this polynomial in $r$. That is, $c_m(r_i)=0$ for $i=1,...,M$ and $c_m(r)\ne0$ for all other $r$.
		
		For each $i=1,...,M$, since the vanishing locus of $F_\zeta$ does not contain any torus translates, there exists a coefficient $c_{m_i}(r_i)\ne0$. Since $|c_{m_i}(r)|$ is a continuous function of $r$, there exists $A_i\in \zed$ such that
		\begin{align}\label{cont}
			|r-r_i|<|\pi|^{A_i}\Rightarrow |c_{m_i}(r)|>\frac{|c_{m_i}(r_i)|}{2}\end{align}	
		Choose $\pi$ in $\cp$ with $R<|\pi|<1$ and consider finding roots of 
		\begin{align}\label{roots(r)}
			c_0 + c_1(r)\pi t + c_2(r)\pi^2t^2 + .... =0
		\end{align}		
		with $|t|\leq 1$ for various $r$. First suppose and $|r-r_i|<|\pi|^{A_i}$ for some $i$. Choose $N_i>m_i$ such that $|\pi|^{N_i}< \frac{|c_{m_i}(r_i)|}{2}|\pi|^{m_i}$, which is possible since $c_{m_i}(r_i)\ne0$ by the choice of $m_i$ ($N_i$ do depend on $\zeta$ which remains fixed as one of finitely many possibilities for now). It follows by Equation (\ref{cont}) that for all $n>N_i$, $|c_n(r)||\pi|^n<|c_{m_i}(r)||\pi|^{m_i}$. Therefore by Strassmann's Theorem there can be at most $N_i$ roots of (\ref{roots(r)}), independent of $r$ satisfying $|r-r_i|<|\pi|^{A_i}$.
		
		It remains to deal with $r$ satisfying $|r-r_i|\geq|\pi|^{A_i}$ for every $i$. This means that $r$ avoids a ball of fixed positive radius $|\pi|^A$ around every root, where $A=\max_i{A_i}$. It follows that $|c_{m}(r)|$ is bounded away from zero for such $r$: indeed if not, taking a sequence $(\rho_k)_{k\in \enn}$ of elements of $\zed_p$ with $|\rho_k-r_i|\geq |\pi|^{A}$ for every $k\in \enn$ and $i= 1,...,M$, such that $|c_{m}(\rho_k)|\to 0$, by compactness of $\zed_p$ there would be a limit point $\rho\in \zed_p$ with $c_{m}(\rho)=0$. This exactly means $\rho = r_i$ for some $i$, which is a contradiction since the sequence $\rho_k$ of which $\rho$ was a subsequential limit is bounded away from $r_i$ by at least a distance $|\pi|^{A}$. 
		
		Having established that the $|c_m(r)|$ are bounded below for such $r$, there exists $\tilde{N}>m$ such that \[0<|\pi|^{\tilde{N}}<\min_{\{r \ | \ \forall i,\ |r-r_i|\geq |\pi|^A \}} |c_{m}(r)| |\pi|^{m}.\] Then if $n>\tilde{N}$, $|c_n(r)||\pi|^n<|c_m(r)||\pi|^m$, so by Strassmann's Theorem, there can be at most $\tilde{N}$ roots of (\ref{roots(r)}), independent of $r$ satisfying that $|r-r_i|<|\pi|^{A_i}$ for all $i$.
		
		Since any $r$ either lies in a ball near one of the roots or avoids all such balls, \[N(F_\zeta,R) = \max\{N_1,...,N_M,\tilde{N}\}\] works independently of $r\in \zed_p$ coming from any equation $q_1^a=q_2^b$ with $v_p(a)-v_p(b)=e$.
	\end{proof}

	\begin{rem}
		$N(F_\zeta,R)$ does depend on $R$ through the absolute value of $\pi$.
	\end{rem}

	Recall that the root of unity $\zeta$ satisfied the inequality $|1-\zeta|<R$. In particular, there are only finitely many possible $\zeta$ (and also only finitely many possible $e$). Therefore the number\[N(F,R) = \max_{\{\zeta \ | \ |1-\zeta|<R\}}N(F_\zeta,R)\] is also finite.

	\begin{thm} \label{ThmOrd}
		The limit in (\ref{ordseq}) is 0 for any ball $B$ of radius strictly less than 1 with ordinary centres.
	\end{thm}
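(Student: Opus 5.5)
The plan is to bound the numerator $|C\cap Y_0(n)\cap B|$ by $O(\sqrt{n})$. Since the denominator $\deg(C)\deg(Y_0(n))$ grows at least linearly in $n$ (as noted after Lemma \ref{padiccondition}, $\deg Y_0(n)\geq n$), this forces the ratio to $0$.

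First I reduce to a standard ball. Let $B$ have radius $R<1$ and ordinary centre. As in the conventions of \S\ref{Notation}, I may enlarge $B$ to radius $R'<1$ so that it contains the canonical lift pair $(\bar{E}_1^{\can},\bar{E}_2^{\can})$; enlarging only increases the numerator. The ball $B$ then sits inside the product of the residue discs of $\bar{E}_1$ and $\bar{E}_2$. The Serre-Tate maps $\Phi_{\bar{E}_i}$ have integral coefficients and are analytic bijections $1+\mathfrak{m}_p\to B_1(\bar{E}_i^{\can})$. Because their inverses are also integral, they respect the nonarchimedean distances, so $B$ corresponds to a ball $B_{R''}((1,1))$ in Serre-Tate coordinates for some $R''<1$. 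If $\bar{E}_1$ and $\bar{E}_2$ are not isogenous, the numerator is $0$ and there is nothing to prove. Otherwise Lemma \ref{listoftori} applies. Together with the subsequent remark, it shows that $\Lambda_{n,R''}$ is a union of at most $C_1u_{R''}\sqrt{n}$ torus translates of the form $q_2=\zeta q_1^{r}$, possibly after swapping the roles of $q_1$ and $q_2$. Here $\zeta$ is a $p$-power root of unity with $|1-\zeta|<R''$, and $r\in\zed_p$.

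Next I count the points on each translate. For a fixed translate, the intersection points of $C$ inside the ball are roots of $F_\zeta(S_1,\zeta(1+S_1)^r-\zeta)$, with convergence justified by Proposition \ref{epsdel} as in the discussion preceding Lemma \ref{singletorus}. Multiplicities are also accounted for here, because the intersection multiplicity of $C$ with a smooth branch equals the order of vanishing of the restricted power series. Lemma \ref{singletorus}, maximised over the finitely many $\zeta$, gives at most $N(F,R'')$ roots per translate. This bound is uniform in $r$, hence uniform in $n$ and in the isogeny $\bar{f}$.

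For the swapped case $q_1=\zeta q_2^{r}$ I would apply the same lemma with the roles of the variables exchanged. This gives another finite constant, and I take the maximum of the two.

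Summing over the translates gives
\[|C\cap Y_0(n)\cap B|\leq C_1u_{R''}N(F,R'')\sqrt{n},\]
and dividing by $\deg(C)\deg Y_0(n)\geq n$ gives the limit $0$.

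The main obstacle is the hypothesis used inside Lemma \ref{singletorus}, namely that $\mathbb{V}(F_\zeta)$ contains no torus translate. To justify it, I would argue that if $C$ locally contained a translate $q_2=\zeta q_1^{r}$, then by analytic continuation and irreducibility $C$ would contain infinitely many points of $\Lambda$. In fact $C$ would coincide with that branch on an open set. If $r$ corresponds to a modular torus this makes $C$ a component of some $Y_0(n)$. If not, I would invoke algebraicity (an Ax–Lindemann type rigidity, or the point-count argument of Remark \ref{CModular}) to rule it out, using the standing assumption that $C$ is not Shimura. A secondary point to check is that a translate may contain points of $\mathbb{V}(F_\zeta)$ off the centre's residue ball; this is harmless, since I only need an upper bound and Strassman counts all roots in the closed disc.
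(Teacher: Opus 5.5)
Your proposal is correct and follows the paper's proof essentially step for step. Lemma \ref{listoftori} gives at most $C_1u_R\sqrt{n}$ torus translates in the ball, and Lemma \ref{singletorus}, maximised over the finitely many $\zeta$, bounds the intersections on each translate uniformly in the exponent. Dividing by $\deg Y_0(n)\geq n$ then gives the limit $0$.

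The hypothesis you single out, that $\mathbb{V}(F_\zeta)$ contains no torus translate, is simply assumed inside the paper's proof of Lemma \ref{singletorus}. For non-modular exponents the right justification is Moonen's formal-linearity theorem, which the paper invokes in \S\ref{Relate}. The point count of Remark \ref{CModular} does not supply it, since that remark only treats the case $C=Y_0(m)$.
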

	
	\begin{proof}
		To compute the limit (\ref{ordseq}), the aim has been to count intersection points of modular curves with $C$ in a fixed ball $B_R((E_1,E_2))$, with $E_1,E_2$ lifts of ordinary elliptic curves and $R<1$. Suppose that $C = \mathbb{V}(F)$ for some polynomial $F\in \oh_p[x,y]$. By Lemma \ref{toruscondition}, this amounts to counting intersections with a collection of curves of the form $q_1^a=q_2^b$ in Serre-Tate coordinates. By Lemma \ref{listoftori}, at most $C_1u_R\sqrt{n}$ torus translates make up $Y_0(n)$ in Serre-Tate coordinates, where $C_1$ is a constant and $u_R$ is the number of roots of unity in $B_R(1)$. By Lemma \ref{singletorus} and the discussion immediately following it, no torus translate can have more than $N=N(F,R)$ intersection points with $C$. Thus the total size of the multiset $C\cap Y_0(n)\cap B_R((E_1,E_2))$ is at most $N(F,R)C_1u_R\sqrt{n}$, whence
		\[\left|\lim_{n\to\infty}\frac{\text{deg}(\Lambda_{n}|_B)}{\text{deg}(\Lambda_{n})}\right|\leq \lim_{n\to\infty}\frac{N(F,R)u_RC_1\sqrt{n}}{n}=0\]
		as desired.
	\end{proof}

	\section{Which Points are Accumulation Points?} \label{Cases}
	
	The aim of this section is to prove Theorem \ref{CasesThm}, describing exactly which points of $C$ are accumulation points of the locus $L =C\cap \bigcup_nY_0(n)$. First, to indicate how the arguments will proceed, the following explicit example will be studied.

	\subsection{Example of Non-Discreteness in Serre-Tate Coordinates} \label{Ex1}
	
	This calculation, which takes place in a simplified setting focussing only on the Serre-Tate coordinates side, does not use algebraicity of the curve in $j$-coordinates and assumes that the point in question is of a particularly nice form. It illustrates some of the ideas which will be generalised in the next section to find accumulation points of the locus $L=C\cap\bigcup_n Y_0(n)$ for $C$ any algebraic curve.
	
	Consider the curve $q_2 = q_1^3-2q_1^2+2q_1$ in an ordinary residue disc with centre $(\bar{E}^\text{can},\bar{E}^\text{can})$ with End$(\bar{E})=\zed[\sqrt{-d}]$, and intersections with tori $q_2 = q_1^{c_f}$ appearing in the image of \ $\bigcup_nY_0(n)$ in Serre-Tate coordinates. Setting $q_1 = 1+s$, have $q_1^3-2q_1^2+2q_1 = 1+s+s^2+s^3$, so the equation to solve is
	\[1+s+s^2+s^3 = 1+c_f s + \binom{c_f}{2} s^2 +....\]
	or
	\[(c_f-1)s + (\binom{c_f}{2}-1) s^2 + (\binom{c_f}{3}-1)s^3 + \binom{c_f}{4} s^4 +.... =0\]
	$s=0$ is always a simple root (for $c_f\ne1$). So to show non-discreteness, the first aim is to show that, for every closed ball $\bar{B}_m \colonequals \bar{B}_{1/p^m}(0)$, there are infinitely many $c_f$ for which the above equation has at least two solutions in $\bar{B}_m$. This is done in the Lemma below. Then since within the ball, distinct tori only intersect in points whose coordinates are roots of unity, and there are no roots of unity in the punctured ball $\bar{B}_{1/p^m}^*(1) =\{q_1\in 1+\mathfrak{m}_p \ | \ 0<|q_1-1|\leq \frac{1}{p^m} \}$ for $m\geq 1$, distinct numbers $c_f$ will yield different coordinates $q_1$ for their intersection. Lastly, $q_2 = (1+s_1)^{c_f}$ also lies in $\bar{B}_{1/p^m}(1)$ whenever $|s_1|\leq \frac{1}{p^m}$. So  this says that for each $m$, the set
	\[(q_2 = q_1^3-2q_1^2+2q_1)\cap \bar{B}_{1/p^m}^*((1,1))\cap \cup_{(n,p)=1}Y_0(n) \] is infinite. It remains to prove the following Lemma, a special case of the more general Corollary \ref{powercalculation} below
	
	\begin{lemma} \label{ExApprox}
		For each $m\geq 1$, there are infinitely many numbers $c_f = \frac{a-b\sqrt{-d}}{a+b\sqrt{-d}}$ with $a,b\in \zed$ and $(a^2+b^2d,p)=1$ such that the equation
		\[(c_f-1)s + (\binom{c_f}{2}-1) s^2 + (\binom{c_f}{3}-1)s^3 + \binom{c_f}{4} s^4 +.... =0\]
		has at least two solutions with $s\in \bar{B}_m = \bar{B}_{\frac{1}{p^m}}(0)$.
	\end{lemma}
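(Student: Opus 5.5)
The idea is to choose $c_f$ $p$-adically very close to $1$. At $c=1$ the equation degenerates to $-s^2-s^3=0$, which has a double root at $s=0$. So for $c_f$ near $1$ we expect two roots near $0$: the trivial simple root $s=0$ and a second, nontrivial one. To make this precise, rescale the variable so that the ball $\bar{B}_m$ becomes the closed unit disc, and count roots with Strassman's Theorem (Theorem \ref{Strass}).

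\textbf{Step 1 (rescaling).} Write $c=c_f$ and
\[f_c(s)=(1+s)^c-(1+s+s^2+s^3)=\sum_{k\geq1}\Bigl(\tbinom{c}{k}-\epsilon_k\Bigr)s^k,\]
where $\epsilon_k=1$ for $k\le 3$ and $\epsilon_k=0$ for $k\geq 4$. Substitute $s=p^mt$, so that roots with $|s|\le p^{-m}$ correspond to roots with $|t|\le1$. By the discussion around Proposition \ref{epsdel}, evaluating this composed series term by term is legitimate. The coefficient of $t^k$ is $(\binom{c}{k}-\epsilon_k)p^{km}$. These coefficients tend to $0$ because $\binom{c}{k}\in\zed_p$ for $c\in\zed_p$.

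\textbf{Step 2 (Strassman count).} Suppose $c\ne1$ and $|c-1|\le p^{-m}$. We compare coefficient sizes.
\begin{itemize}
\item The $t^1$ coefficient has absolute value $|c-1|p^{-m}\le p^{-2m}$.
\item The $t^2$ coefficient is $(\binom{c}{2}-1)p^{2m}$. Since $\binom{c}{2}\equiv\binom{1}{2}=0$ modulo a positive power of $p$, the factor $\binom{c}{2}-1$ is a unit. Hence this coefficient has absolute value exactly $p^{-2m}$.
\item For $k\ge3$ the $t^k$ coefficient has absolute value at most $p^{-km}<p^{-2m}$.
\end{itemize}
Therefore the index $N$ in Strassman's Theorem is exactly $2$, so $f_c(p^mt)$ has exactly two roots (with multiplicity) in $\oh_p$. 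The root $t=0$ is simple because its linear coefficient $(c-1)p^m$ is nonzero. Hence there is a second root $t_0\ne0$ with $|t_0|\le1$, giving a second solution $s=p^mt_0\in\bar{B}_m$.

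\textbf{Step 3 (producing infinitely many admissible $c_f$).} Use the embedding $\kew(\sqrt{-d})\subset\kew_p$ given by ordinarity, under which $\sqrt{-d}\in\zed_p$. Take $b=p^mb'$ with $b'\ne0$, and take $a\in\zed$ with $p\nmid a$. Then $a^2+b^2d\equiv a^2\not\equiv0\pmod p$, and $a+b\sqrt{-d}\equiv a$ is a $p$-adic unit. Consequently
\[c_f-1=\frac{-2b\sqrt{-d}}{a+b\sqrt{-d}}\]
has valuation at least $m$, and $c_f\ne1$ because $b\neq0$. The map $a/b\mapsto c_f$ is injective (a Möbius transformation in $\sqrt{-d}\,b/a$). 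So fixing $b$ and letting $a$ range over integers prime to $p$ gives infinitely many distinct $c_f$, each satisfying the hypotheses of Step 2.

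The only delicate point is Step 2: one must check that the quadratic coefficient really dominates. This uses two facts: $\binom{c}{2}-1$ is a unit when $c\equiv1$, and the factor $2$ in $c_f-1$ does not harm the estimate (when $p=2$ it only improves the valuation). Strassman's Theorem over $\oh_p$ then gives roots in $\cp$, which is what is needed.
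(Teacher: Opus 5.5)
Your argument is correct and is essentially the paper's: rescale by $s=p^mt$, take $c_f$ $p$-adically close to $1$ so that the quadratic coefficient $(\binom{c_f}{2}-1)p^{2m}$ has valuation exactly $2m$ and dominates, and apply Strassman's Theorem. The paper instead takes $a=1$, $b\in p^{m+1}\zed$ so that $c_f\equiv1\pmod{p^{m+1}}$ and the linear term is strictly smaller, while you allow a tie and use maximality of $N$. One caution: the hypothesis $|c-1|\le p^{-m}$ in your Step 2 does not by itself make $\binom{c}{2}-1$ a unit when $p=2$, $m=1$ (e.g.\ $c=3$), but your Step 3 family has $v_2(c_f-1)\ge m+1$, which is exactly what is needed, so the proof stands.
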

	
	\begin{proof}
		
		For the ball $\bar{B}_m$, make the change of variable $s = p^mt$ to change the problem to solving, for $|t|\leq 1$,
		\[(c_f-1)p^mt + (\binom{c_f}{2}-1)p^{2m} t^2 + (\binom{c_f}{3}-1)p^{3m}t^3 + \binom{c_f}{4}p^{4m} t^4 +.... =0\]
		Consider numbers $c_f$ satisfying $c_f\equiv 1 \pmod{p^{m+1}}$. There are infinitely many such, for instance take $f[p^\infty]=1+p^{m+1}b\sqrt{-d}$ for $b\in \zed$. Then \[c_f = \frac{1-p^{m+1}b\sqrt{-d}}{1+p^{m+1}b\sqrt{-d}},\] are all distinct numbers satisfying $c_f\equiv 1 \pmod{p^{m+1}}$. The zero locus of the equation $q_2 = q_1^{c_f}$ is a branch of $Y_0(1+db^2p^{2m+2})$, where the degree $1+db^2p^{2m+2}$ is coprime with $p$.
		
		Then $\binom{c_f}{2} = \frac{c_f(c_f-1)}{2}$ is divisible by $p^{m}$ (even by $p^{m+1}$ if $p>2$), so $\binom{c_f}{2}-1\in -1+\mathfrak{m}_K$. Thus have
		\[v_p((c_f-1)p^m)\geq 2m+1, \qquad v_p\left(\left(\binom{c_f}{2}-1\right)p^{2m}\right)=2m\]
		Hence by Strassman's Theorem, there are at least two roots $t\in \oh_p$ of the above equation, so for every $c_f\equiv 1\pmod{p^{m+1}}$, there is a root of $q_1^3-2q_1^2+2q_1=q_1^{c_f}$ in the punctured ball $\bar{B}_{1/p^m}^*(1) $.	
	\end{proof}

	\subsection{Accumulation Points in General}

	If $C$ is a modular curve then it is clear that all points are accumulation points in the locus $L=C\cap \bigcup_nY_0(n)$. Thus in this section, the assumption that $C$ is not a Shimura subvariety of the product of modular curves remains. Since the questions in this section are local, it is not necessary to assume that the whole curve lies within the ordinary locus. These calculations work for a general curve $C$ around a point $P$ corresponding to a pair of elliptic curves with good ordinary reduction.
	
	Certainly if a point $P$ does not lie on $C$, it cannot be an accumulation point of a locus which is contained in $C$. So in the interest of studying accumulation points, begin by assuming that $P$ lies on $C$. In \S\ref{Ex1}, Lemma \ref{ExApprox} showed that the numbers $c_f$ could be used to well-approximate the value 1. The next Lemma is an analogue of this, with the aim of showing that the exponents appearing in the tori comprising the locus of modular curves in Serre-Tate coordinates approximate well all possible exponents.
	
	\begin{defn}\label{tori}
		A curve in Serre-Tate coordinates of the form $q_2 = q_1^c$ for $c\in \zed_p$ shall be referred to as a \textit{torus}. If further $c=c_{\bar{f}}=\frac{a_{\bar{f}}^T}{a_{\bar{f}}}$ for $\bar{f}$ an isogeny as in the construction in Section \ref{ModularCurves}, the curve shall be called a \textit{modular torus}. Also say that such a $c$ \textit{comes from a modular curve}.
	\end{defn}
	
	In this language, the following Lemma says that there are `enough' modular tori.
	
	\begin{lemma} \label{approxlemma}
		Let $\oh$ be an order of discriminant prime to $p$ in the ring of integers of the quadratic imaginary number field $\kew(\sqrt{-d})$, with $d>0$ a squarefree integer. Let $I\triangleleft \oh$ be a non-zero ideal. Then the collection of elements
		\[\{ \frac{\bar{z}}{z}\ |\ z\in I,\ v_p(z)\leq v_p(\bar{z})\}\] is dense in $\zed_p$. 
	\end{lemma}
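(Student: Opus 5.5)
The plan is to work $p$-adically inside $\oh\otimes\zed_p$ and use that a lattice is dense in its $p$-adic completion. As in Section \ref{ModularCurves}, $\kew(\sqrt{-d})$ is viewed inside $\kew_p$, which means $p$ splits in $\kew(\sqrt{-d})$. There are then two embeddings $\iota,\bar\iota\colon \kew(\sqrt{-d})\hookrightarrow\kew_p$, with $\bar\iota(z)=\iota(\bar z)$. In the statement, $z$ and $\bar z$ are read as $\iota(z)$ and $\iota(\bar z)$, and the condition $v_p(z)\leq v_p(\bar z)$ is exactly what makes $\bar z/z$ lie in $\zed_p$. Consider the map
\[\Psi\colon \oh\otimes_{\zed}\zed_p\to \zed_p\times\zed_p,\qquad z\otimes\lambda\mapsto(\lambda\iota(z),\lambda\bar\iota(z)).\]
Since the discriminant of $\oh$ is prime to $p$, the order $\oh\otimes\zed_p$ is the maximal order of $\kew(\sqrt{-d})\otimes\kew_p\cong\kew_p\times\kew_p$. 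Hence $\Psi$ is a ring isomorphism.

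Next I describe the closure of $I$. The ideal $I$ is a full-rank sublattice of $\oh$, so $I\otimes\zed_p$ is an ideal of $\zed_p\times\zed_p$ of finite index. It therefore has the form $p^{\alpha}\zed_p\times p^{\beta}\zed_p$ for some integers $\alpha,\beta\geq0$. Moreover, $I$ is dense in $I\otimes\zed_p$ for the $p$-adic topology: write $I=\zed\omega_1\oplus\zed\omega_2$, so that $I\otimes\zed_p=\zed_p\omega_1\oplus\zed_p\omega_2$, and use that $\zed$ is dense in $\zed_p$. Consequently, for any $(x,y)\in p^{\alpha}\zed_p\times p^\beta\zed_p$ and any $k$, there is some $z\in I$ with $\iota(z)\equiv x$ and $\iota(\bar z)\equiv y \pmod{p^{k}}$.

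Now fix a target $c\in\zed_p$ and a precision $k\geq1$. Put $M=\max(\alpha,\beta)$ and take $(x,y)=(p^M,cp^M)$, which lies in $I\otimes\zed_p$. Choose $z\in I$ approximating $(x,y)$ to precision $p^{M+k}$. Then $v_p(z)=M$ exactly, while $v_p(\bar z)\geq M$, so the condition $v_p(z)\leq v_p(\bar z)$ holds and $z\neq0$. Furthermore, writing $\bar z=cp^M+O(p^{M+k})$ and $z=p^M(1+O(p^k))$, we get $\bar z/z\equiv c\pmod{p^k}$. Since $c$ and $k$ were arbitrary, the set in the statement is dense in $\zed_p$.

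The main point needing care is the identification of $\oh\otimes\zed_p$ with $\zed_p\times\zed_p$ via the two embeddings. This is where both hypotheses enter: $p$ splits because of ordinariness, i.e.\ $\kew(\sqrt{-d})\subset\kew_p$, and the conductor is prime to $p$. Without these, $\iota$ and $\bar\iota$ would not be independent coordinates and the approximation could not be done separately in each coordinate. Everything else is the standard density of a lattice in its $p$-adic completion.
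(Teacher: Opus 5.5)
Your proof is correct, but it follows a different route from the paper's. The paper fixes one nonzero $w=\beta+\gamma\sqrt{-d}\in I$ and works inside the explicit sublattice $\{(u+v\sqrt{-d})w : u\in\zed,\ v\in e\zed\}\subset I$. It rewrites $\bar z/z\equiv t\pmod{p^r}$ as a linear congruence $vA\equiv uB\pmod{p^r}$ and solves it in two cases, according to which of $A,B$ has smaller valuation. It reads off $v_p(z)$ and $v_p(\bar z)$ from the identities $(A+B\sqrt{-d})(\beta+\gamma\sqrt{-d})=2\sqrt{-d}\,N(w)$ and $(A-B\sqrt{-d})(\beta-\gamma\sqrt{-d})=2t\sqrt{-d}\,N(w)$, and it uses the Chinese Remainder Theorem with $(e,p)=1$ to force $e\mid v$. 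You instead pass to the completion, where $z\mapsto(\iota(z),\iota(\bar z))$ makes $z$ and $\bar z$ independent $p$-adic coordinates, so approximating the single target $(p^M,cp^M)$ does everything at once. This avoids the case split and the somewhat delicate valuation bookkeeping (dividing by $A$ modulo $p^r$, and the factor $2\sqrt{-d}$ when $p=2$). It also isolates what the lemma really rests on: $\Psi(I\otimes\zed_p)$ is open in $\zed_p\times\zed_p$. The paper's argument, in exchange, is completely elementary and produces the approximating $z$ explicitly. One small remark on your closing paragraph: the conductor hypothesis is not essential to your argument. If $p$ splits but divides the conductor, $\Psi$ is still injective with finite cokernel. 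So $\Psi(I\otimes\zed_p)$ still contains $p^M\zed_p\times p^M\zed_p$ for some $M$, which is all you used. Only the splitting of $p$ is really needed; the statement assumes this tacitly, and you were right to make it explicit.
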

	
	\begin{proof}
		
		Write $\oh = \zed\oplus e\omega \zed$ where $\{1,\omega\}$ form an integral basis for the ring of integers in $\kew(\sqrt{-d})$. The value of Disc$(\oh)$ is either $-e^2d$ or $-4e^2d$, so the hypotheses of the Lemma imply that $d$ and $e$ are coprime to $p$. Choose a non-zero element $\beta+\gamma\sqrt{-d}\in I$. After perhaps multiplying by 2, may assume $\beta,\gamma\in \zed$.

	All elements of the form $(u+v\sqrt{-d})(\beta+\gamma\sqrt{-d})$, where $u,v\in\zed$ and $e|v$, all lie in $I$ (although of course not every element need have this form). Thus the Lemma would follow from finding, for every $t\in \zed_p$ and for every sufficiently large $r>0$, a solution $u\in \zed$, $v\in e\zed$, to the congruence 
	\begin{align}\label{Case1Cong}
		t\equiv \frac{u\beta-vd\gamma-(v\beta+u\gamma)\sqrt{-d}}{u\beta-vd\gamma+(v\beta+u\gamma)\sqrt{-d}}\pmod{p^r},
	\end{align}
	The congruence is equivalent to 	
		\[
		(1-t)(u\beta-vd\gamma) \equiv (1+t)(v\beta+u\gamma)\sqrt{-d} \pmod{p^r},\] as long as both sides are not identically zero.	Rearranging further, the above becomes:		
		\begin{align*} v(\beta\sqrt{-d}(1+t)+d\gamma(1-t))\equiv u(\beta(1-t)-\gamma\sqrt{-d}(1+t))\pmod{p^r}.\end{align*}		
		Write $A = \beta\sqrt{-d}(1+t)+d\gamma(1-t)$ and $B= \beta(1-t)-\gamma\sqrt{-d}(1+t)$. If $A=B=0$, then computing the ratio $\frac{1+t}{1-t}$ (or its inverse if $t=1$) from each of $A=0$ and $B=0$ gives $\sqrt{-d}(\beta^2+d\gamma^2)=0$, a contradiction to having chosen a non-zero element of $I$ in the first place. Now, fix $r>0$.

		\textbf{Case 1:} Suppose first that $R = v_p(A) \leq v_p(B)$ (so $R<\infty$). Further, set \[S = v_p(N(\beta+\gamma\sqrt{-d}))<\infty.\] Then choose $(u,v)\equiv(1,\frac{B}{A})\pmod{p^{r}}$, where this makes sense because $\frac{B}{A}\in \zed_p$. Then consider \begin{align*} z &= (u+v\sqrt{-d})(\beta+\gamma\sqrt{-d})  \\ &\equiv \frac{1}{A}(A+B\sqrt{-d})(\beta+\gamma\sqrt{-d}) \pmod{p^r} \\ &\equiv \frac{2\sqrt{-d}(\beta-\gamma\sqrt{-d})(\beta+\gamma\sqrt{-d})}{A} \pmod{p^r}  \\ &\equiv \frac{2\sqrt{-d}N(\beta+\gamma\sqrt{-d})}{A}\pmod{p^r}.\end{align*} Thus if  $r>S$, $v_p(z) = S-R$. Further, \begin{align*} \bar{z} &= (u-v\sqrt{-d})(\beta-\gamma\sqrt{-d})  \\ &\equiv \frac{1}{A}(A-B\sqrt{-d})(\beta-\gamma\sqrt{-d}) \pmod{p^r} \\ &\equiv \frac{2t\sqrt{-d}(\beta+\gamma\sqrt{-d})(\beta-\gamma\sqrt{-d})}{A} \pmod{p^r} \\ &\equiv \frac{2t\sqrt{-d}N(\beta+\gamma\sqrt{-d})}{A}\pmod{p^r}.\end{align*} So $v_p(\bar{z}) \geq S-R$ (with equality if $2t$ is a unit). Then $\frac{\bar{z}}{z}\equiv t\pmod{p^r}$ as desired. Since $(e,p^r)=1$, it is possible to do this while simultaneously forcing $v\equiv 0\pmod{e}$ by the Chinese Remainder Theorem. 
		
		\textbf{Case 2:} If instead $v_p(B)\leq v_p(A)$, take $(u,v) \equiv (\frac{A}{B},1)\pmod{p^r}$ and $z = (u+v\sqrt{-d})(\beta+\gamma\sqrt{-d})$. Now compute
		 \[
		 z \equiv \frac{1}{B}(A+B\sqrt{-d})(\beta+\gamma\sqrt{-d}) \equiv \frac{2\sqrt{-d}N(\beta+\gamma\sqrt{-d})}{B}\pmod{p^r},
		\] so $v_p(z)=S-R$ and \[\bar{z}\equiv \frac{1}{B}(A-B\sqrt{-d})(\beta-\gamma\sqrt{-d}) \equiv \frac{2t\sqrt{-d}N(\beta+\gamma\sqrt{-d})}{B}\pmod{p^r},\] whence $v_p(\bar{z})\geq S-R$, and $\frac{\bar{z}}{z}\equiv t \pmod{p^r}$ with $v_p(z)\leq v_p(\bar{z})$. Again, by the Chinese Remainder Theorem, it is possible to simultaneously force $v\equiv 0\pmod{e}$. 
		\end{proof}

	The corresponding version of this statement for tori in Serre-Tate coordinates is the following. Let $\bar{E}_1,\bar{E}_2$ be isogenous ordinary elliptic curves over $\fpbar$. Fix an isogeny between them and use it to identify Hom$(\bar{E}_1,\bar{E}_2)$ with an ideal in an order in an imaginary quadratic field (as in the proof of Lemma \ref{listoftori}). Take Serre-Tate coordinates on the residue disc $B_R((\bar{E}_1$,$\bar{E}_2))$ (with $R<1$).

	\begin{cor} \label{powercalculation}
		The set \[\{c\in \zed_p \ | \ q_2 = q_1^c \text{ is a modular torus}\}\] is dense in $\zed_p$.

	\end{cor}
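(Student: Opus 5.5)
The plan is to reduce the Corollary directly to Lemma \ref{approxlemma} by identifying the set of exponents $c_{\bar f}$ of modular tori with a set of the form $\{\bar z/z \mid z\in I,\ v_p(z)\le v_p(\bar z)\}$. As in the proof of Lemma \ref{listoftori}, fix an isogeny $g\colon \bar E_1\to\bar E_2$ of minimal degree $m$. Use $h\mapsto h\circ g^T$ to embed $\mathrm{Hom}(\bar E_1,\bar E_2)$ as a left ideal $I$ in $\oh=\End(\bar E_2)$. Here $\oh$ is an order in $\kew(\sqrt{-d})$, and its conductor is prime to $p$ because $\bar E_2$ is ordinary. Ordinariness also gives an embedding $\kew(\sqrt{-d})\hookrightarrow\kew_p$. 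Since $\oh$ is commutative, $I$ is an honest ideal of $\oh$, so Lemma \ref{approxlemma} applies to it.

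The key step is to compute $a_{\bar f}$ and $a_{\bar f^T}$ in terms of $z=\bar f\circ g^T\in I$. Choose the embedding $\kew(\sqrt{-d})\hookrightarrow\kew_p$ to be the one for which an endomorphism $\alpha\in\oh$ acts on the étale part $\bar E_2[p^\infty](\fpbar)\cong \kew_p/\zed_p$ by multiplication by $\alpha$. The Rosati involution (dual isogeny) is complex conjugation, so $\bar\alpha$ acts by the conjugate embedding, which corresponds to the connected part.

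By functoriality, $a_{\bar f}\,a_{g^T}=a_{z}=z$, viewed in $\zed_p$. Dually, $a_{g}\,a_{\bar f^T}=a_{\bar z}=\bar z$, because $g\circ \bar f^T=(\bar f\circ g^T)^T=\bar z$. Hence
\[ c_{\bar f}=\frac{a_{\bar f^T}}{a_{\bar f}}=\frac{\bar z}{z}\cdot\frac{a_{g^T}}{a_g}=\frac{\bar z}{z}\cdot c_{g}^{-1}, \]
where $c_g^{-1}=a_{g^T}/a_g$ is a fixed element of $\kew_p^\times$ independent of $\bar f$.

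The torus condition in Theorem \ref{toruscondition} is symmetric under swapping $\bar f$ with $\bar f^T$ (equivalently, swapping the roles of $q_1,q_2$). As in the proof of Lemma \ref{listoftori}, we normalise so that the modular tori in the chosen coordinates are exactly $q_2=q_1^{c_{\bar f}}$ with $c_{\bar f}\in\zed_p$. This is where the condition $v_p(z)\le v_p(\bar z)$ in Lemma \ref{approxlemma} enters, up to the fixed shift $v_p(a_{g^T})-v_p(a_g)$. Up to roots of unity, which translate the torus, the set in the Corollary therefore contains $\lambda\cdot\{\bar z/z : z\in I,\ v_p(z)\le v_p(\bar z)\}\cap\zed_p$ with $\lambda=c_g^{-1}$.

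The main obstacle is that $\lambda$ need not be a unit, for instance when $p\mid m$. Multiplication by a fixed unit is a homeomorphism of $\zed_p$, so if $\lambda\in\zed_p^\times$, density follows immediately from Lemma \ref{approxlemma}. To get this, I would first try to choose $g$, or replace it by $g\circ\alpha$ for suitable $\alpha$, so that $\deg g$ is prime to $p$; then $a_g$ and $a_{g^T}$ are both units. This is possible because Lemma \ref{approxlemma}'s own proof shows that $I$ contains elements of every prescribed ratio. Failing that, I would rerun Lemma \ref{approxlemma} with the target $t$ replaced by $\lambda^{-1}t$, which ranges over a set still containing a neighbourhood of every point of $\zed_p$ in $\lambda^{-1}\zed_p\supseteq\zed_p$ when $v_p(\lambda)\le 0$. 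If $v_p(\lambda)>0$, I would swap the roles via transposition. Either way, for every $t\in\zed_p$ and every $r$ we obtain a modular torus exponent $c\equiv t\pmod{p^r}$, which gives density.
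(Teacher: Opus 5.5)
Your route is the paper's: identify $\mathrm{Hom}(\bar E_1,\bar E_2)$ with a nonzero ideal $I\subseteq\End(\bar E_2)$ via $h\mapsto h\circ g^T$, then read off density from Lemma \ref{approxlemma}. Your bookkeeping is more careful than the paper's proof, which simply sets $c=\bar z/z$.

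From $a_{\bar f}a_{g^T}=z$ and $a_g a_{\bar f^T}=\bar z$ you correctly get $c_{\bar f}=\lambda\,\bar z/z$ with $\lambda=a_{g^T}/a_g$. So the set in the Corollary is exactly $\lambda S\cap\zed_p$, where $S=\{\bar z/z : z\in I\setminus\{0\}\}$. (Minor slip: $a_{g^T}/a_g$ is $c_g$, not $c_g^{-1}$; take $\bar f=g$, so $z=m$.) The factor $\lambda$ really can be a non-unit. For example, if $g$ is the relative Frobenius $\bar E\to\bar E^{(p)}$, then $a_g$ is a unit and $v_p(a_{g^T})=1$, so $v_p(\lambda)=1$.

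The gap is in your treatment of $\lambda$, in three places:
\begin{enumerate}
\item Replacing $g$ by $g\circ\alpha$ cannot make the degree prime to $p$, since $\deg(g\circ\alpha)=m\deg\alpha$. Lemma \ref{approxlemma} controls ratios $\bar z/z$, not degrees, so it does not supply a $g$ with $p\nmid\deg g$ either.
\item Approximating $\lambda^{-1}t$ instead of $t$ works when $v_p(\lambda)\le 0$, but the inclusion you want there is $\lambda^{-1}\zed_p\subseteq\zed_p$, not $\supseteq$.
\item When $v_p(\lambda)>0$ and $v_p(t)<v_p(\lambda)$, the target $\lambda^{-1}t$ has negative valuation. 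Lemma \ref{approxlemma} never reaches such targets, and ``swap the roles via transposition'' is not spelled out. Transposing $\bar f$ alone only produces exponents of tori $q_1=q_2^{c}$ in the other orientation. This is exactly the Frobenius situation above.
\end{enumerate}

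The missing observation is that $S$ is closed under inversion. Choose $0\neq\alpha\in I$; then $N=\alpha\bar\alpha\in I$ and $N\oh\subseteq I$. For $z\in I\setminus\{0\}$, the element $w=N\bar z$ lies in $I$ and satisfies $\bar w/w=z/\bar z$. Combined with Lemma \ref{approxlemma}, this gives $\overline{S}\supseteq\zed_p\cup\{x : v_p(x)\le 0\}=\kew_p$. Hence $\lambda S$ is dense in $\kew_p$, and $\lambda S\cap\zed_p$ is dense in the open set $\zed_p$, whatever $\lambda$ is.

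Equivalently, you could apply Lemma \ref{approxlemma} to the ideal $\mathrm{Hom}(\bar E_2,\bar E_1)\circ g\subseteq\End(\bar E_1)$, which yields $c_{\bar f}=\lambda\,z'/\bar z'$; this is presumably what your transposition was meant to be. Choosing $g$ of prime-to-$p$ degree from the start is also possible, but it needs an ideal-class argument you have not given.
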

	
	\begin{proof}
		The order in question is End$(\bar{E}_2)$. An order in the quadratic field $\kew(\sqrt{-d})$ with $d>0$ a squarefree integer has the form $\zed\oplus e\omega \zed$ where $e\in \zed\setminus\{0\}$ and $\omega =  \sqrt{-d}$ or $\frac{1+\sqrt{-d}}{2}$. The discriminant of this order is $-4e^2d$ or $-e^2d$ respectively. A subring of this order is $\zed[$Frob] (Frob viewed as an element in the ring End$(E_2)$) which has discriminant $D_F = t^2-4q$. $D_F$ is coprime to $p$ by (one) definition of ordinariness. $D|D_F$ by containment, so $D$, and thus $e$ and $d$, are both coprime to $p$. Therefore the above Lemma can be applied. Identifying $z$ in the Lemma with the numbers $a_{\bar{f}}\in$ Hom$(\bar{E}_1,\bar{E}_2)$, the condition $v_p(z)\leq v_p(\bar{z})$ ensures that $q_2 = q_1^{\bar{z}/z}$ is a modular torus (and translates by $(p^{v_p(z)})^\text{th}$ roots of unity are also part of the locus $\bigcup_nY_0(n)$). So writing $c =\frac{\bar{z}}{z}$ gives an identification of the set \[\{c\in \zed_p \ | \ q_2 = q_1^c \text{ is a modular torus}\}\] with the dense set of the previous lemma.
	\end{proof}

	\subsubsection{Setting up the Equation for General Integral Points $P=(j_1^0,j_2^0)$}
	
	Suppose $P$ is a point on $C$ corresponding to a pair of elliptic curves which both have good ordinary reduction. The following two Lemmas regarding limits of points on different tori are required for all three of the cases in Theorem \ref{CasesThm}:

	\begin{lemma}\label{uniform}
		Let $a_n \in \mathfrak{m}_p$ with  $a_n \to a$ for some $a\in \mathfrak{m}_p$ and $c_n\in \zed_p$ with $c_n \to c$ for some $c\in \zed_p$. Then $(1+a_n)^{c_n}\to (1+a)^c$.
	\end{lemma}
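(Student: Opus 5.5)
The plan is a triangle inequality that splits the error into a base change and an exponent change:
\[
|(1+a_n)^{c_n}-(1+a)^c| \leq |(1+a_n)^{c_n}-(1+a)^{c_n}| + |(1+a)^{c_n}-(1+a)^c|.
\]
Each term is then bounded using only the group structure of $1+\mathfrak{m}_p$ and the definition of $p$-adic exponentiation $x^c=\lim_k x^{c^{(k)}}$, where $c^{(k)}\in\zed$ and $c^{(k)}\to c$ $p$-adically. Throughout I use that $x^c$ stays in $1+\mathfrak{m}_p$ and that $(xy)^c=x^cy^c$; both follow from the integer case by passing to the limit.

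For the first term, write $(1+a_n)^{c_n}-(1+a)^{c_n}=(1+a)^{c_n}\bigl(u_n^{c_n}-1\bigr)$ with $u_n=(1+a_n)/(1+a)$. The prefactor is a unit, and $|u_n-1|=|a_n-a|\to 0$. So it suffices to show that $|u^{c}-1|\leq |u-1|$ for all $u\in 1+\mathfrak{m}_p$ and all $c\in\zed_p$. For $c\in\zed_{\geq0}$ this holds because
\[
u^c-1=(u-1)\bigl(u^{c-1}+\dots+1\bigr)
\]
and the second factor is integral. For negative integers, use $u^{-c}-1=-u^{-c}(u^c-1)$. The bound then passes to all of $\zed_p$ by continuity of the absolute value. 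This estimate is uniform in $c$, which is what lets $c_n$ vary freely.

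For the second term, factor it as $(1+a)^{c}\bigl((1+a)^{c_n-c}-1\bigr)$. It remains to show that $x^{\epsilon}\to 1$ as $\epsilon\to 0$ in $\zed_p$ for fixed $x=1+a\in 1+\mathfrak{m}_p$. This is the step where care is needed, because $|a|$ may be close to $1$, so the binomial series need not visibly converge. The plan avoids the binomial series. Write $\epsilon=p^k\eta$ with $\eta\in\zed_p$ and $k\to\infty$. By the previous paragraph, $|x^{p^k\eta}-1|\leq |x^{p^k}-1|$, so it is enough to show $x^{p^k}\to 1$. Set $y=x-1$. Then
\[
x^p-1=y^p+\sum_{i=1}^{p-1}\binom{p}{i}y^i.
\]
Since $p$ divides each $\binom{p}{i}$ for $0<i<p$, this gives $|x^p-1|\leq\max(|y|^p,|p||y|)$. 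Iterating, the sequence $|x^{p^k}-1|$ is non-increasing, and once it falls below $|p|^{1/(p-1)}$ it decreases by a factor of $|p|$ at each step. Before that, each step raises it to the $p$-th power. Either way it tends to $0$ as $k\to\infty$.

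Combining the two estimates, both terms tend to $0$, which proves the lemma. I expect the only real subtlety to be the uniform estimate for $|x^{p^k}-1|$ when $|a|$ is near $1$. The iteration above handles it without any convergence claims about the binomial series.
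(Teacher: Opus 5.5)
Your proof is correct, and it takes a different route from the paper. The paper works directly with the binomial expansion $(1+a_n)^{c_n}-(1+a)^c=\sum_{j\ge1}\bigl(\binom{c_n}{j}a_n^j-\binom{c}{j}a^j\bigr)$. It handles the two parts as follows:
\begin{itemize}
\item It bounds the tail using only that binomial coefficients lie in $\mathbb{Z}_p$ and that $a_n^j$ and $a^j$ are small for large $j$.
\item It handles the finitely many remaining terms by taking $c_n\equiv c$ and $a_n\equiv a$ modulo $p^{A+B}$, where $B=\max_{j\le J}v_p(j!)$ absorbs the factorial denominators.
\end{itemize}
You never expand anything. Instead you use the group structure of $1+\mathfrak{m}_p$ to separate base from exponent. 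Your key estimate is $|u^c-1|\le|u-1|$, uniform in $c\in\mathbb{Z}_p$. You then reduce continuity in the exponent to $x^{p^k}\to1$, which you get from $|x^p-1|\le\max(|x-1|^p,|p||x-1|)$.

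What your route buys:
\begin{itemize}
\item It avoids the head/tail bookkeeping with factorials and cutoffs. That bookkeeping is slightly delicate: with only $v_p(a_n)\ge\frac12 v_p(a)$ available, the paper's tail cutoff really has to be $2J$ rather than $J$.
\item By the ultrametric inequality, it gives an explicit bound $\max\bigl(|a_n-a|,\,|(1+a)^{p^k}-1|\bigr)$ with $k=v_p(c_n-c)$.
\item The same estimates are exactly what make $x^c=\lim x^{c^{(k)}}$ well defined in the first place.
\end{itemize}
The paper's route, in exchange, is entirely coefficient-level and fits the power-series computations used elsewhere in that section.

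One correction to your commentary: the binomial series does converge for every $a\in\mathfrak{m}_p$. Since $\binom{c}{j}\in\mathbb{Z}_p$, we have $|\binom{c}{j}a^j|\le|a|^j\to0$, and this is precisely what the paper exploits. The expression whose domain is genuinely restricted is $\exp(c\log(1+a))$, not the binomial series. This does not affect your argument.
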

	
	\begin{proof}
		Fix $A\in \enn$ and aim to find $N\in \enn$ such that, whenever $n\geq N$, $v_p((1+a)^c-(1+a_n)^{c_n})\geq A$. Rewriting and using the binomial formula, the expression to control is		
		\[(1-1) + (c_na_n-ca) + (\binom{c_n}{2}a_n^2-\binom{c}{2}a^2) + .... + (\binom{c_n}{j}a_n^j-\binom{c}{j}a^j) + ...\]
		Choose $J$ such that  $v_p(a^{J})\geq A$. Let $N_1$ be such that whenever $n\geq N_1$, $v_p(a_n)\geq \frac{1}{2}v_p(a)$. Then for $n\geq N_1$, $v_p(a_n^{2J})\geq A$. Since binomial coefficients always lie in $\zed_p$, whenever $n\geq N_1$ and $j\geq J$, have $v_p(\binom{c_n}{j}a_n^j-\binom{c}{j}a^j)\geq A$. Therefore it suffices to focus on the terms with powers up to $J-1$.
		
		Let $B = \max_{j\leq J} v_p(j!)$. Choose $N_2$ such that for $n\geq N_2$, $v_p(c_n-c)\geq A+B$ and $v_p(a_n-a)\geq A+B$. Then for every $j\leq J$,	
		\[v_p\left(\binom{c_n}{j}a_n^j-\binom{c}{j}a^j\right)=v_p\left(\frac{c_n(c_n-1)...(c_n-j+1)a_n^j-c(c-1)...(c-j+1)a^j}{j!}\right) \geq A\]		
		since the two terms in the numerator are equal modulo $p^{A+B}$. Letting $N=\max\{N_1,N_2\}$ achieves the aim.
	\end{proof}

	\begin{lemma}\label{MustTorus}
		Let $P$ be an integral point on $C$ which is an accumulation point of the locus $L=C\cap \bigcup_nY_0(n)$. Then $P$ must lie on a torus translate.
	\end{lemma}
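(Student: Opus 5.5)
We work in Serre--Tate coordinates on the residue disc of $P$. Write $P=(q_1^0,q_2^0)$ with $q_i^0\in 1+\mathfrak{m}_p$. Take a sequence of distinct points $P_k=(q_{1,k},q_{2,k})\in L$ with $P_k\to P$. We may assume all $P_k$ lie in the same residue disc as $P$.

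By Lemma \ref{listoftori}, each $P_k$ lies on a torus translate. So there are $c_k\in\zed_p$ and $p$-power roots of unity $\zeta_k$ such that, after possibly swapping the roles of $q_1,q_2$ using the dual isogeny,
\[q_{2,k}=\zeta_k\, q_{1,k}^{c_k}.\]
Since $\zed_p$ is compact, we may pass to a subsequence and assume $c_k\to c\in\zed_p$. We may also fix which coordinate is the ``source'' along the whole subsequence.

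Next we control the roots of unity. From the proof of Lemma \ref{listoftori}, $\zeta_k=q_{2,k}q_{1,k}^{-c_k}$. Once $P_k$ lies in a ball $B_R((q_1^0,q_2^0))$ with $R<1$, and after recentring if necessary, $\zeta_k$ lies in a fixed ball of radius $<1$ around $q_2^0(q_1^0)^{-c_k}$. Concretely, $|q_{i,k}|=1$ and $|q_{i,k}-q_i^0|\to 0$, so $\zeta_k$ stays in a fixed closed subgroup $B_{R'}(1)\cdot(\text{bounded set})$ of $1+\mathfrak{m}_p$ with $R'<1$. That region contains only finitely many $p$-power roots of unity; this is the finiteness of $u_R$ noted after Lemma \ref{listoftori}. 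Passing to a further subsequence, $\zeta_k=\zeta$ is constant.

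This is the step I expect to need the most care. A priori the translating roots of unity could have unbounded order, drifting off toward the boundary of the unit disc. The fix is to use $\zeta_k=q_{2,k}q_{1,k}^{-c_k}$ together with Lemma \ref{uniform}, which shows that $q_{1,k}^{-c_k}$ converges. Hence $\zeta_k$ converges to some element of $1+\mathfrak{m}_p$. The $p$-power roots of unity form a discrete set in $1+\mathfrak{m}_p$: distinct ones satisfy $|\zeta-\zeta'|\ge |\zeta_p-1|$-type lower bounds, or more simply, any convergent sequence of them lies eventually in a ball of radius $<1$ about its limit, which contains only finitely many. So the sequence $\zeta_k$ is eventually constant.

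Finally, apply Lemma \ref{uniform} with $a_k=q_{1,k}-1\to q_1^0-1$ and exponents $c_k\to c$. This gives $q_{1,k}^{c_k}\to (q_1^0)^c$. Therefore
\[q_2^0=\lim_k q_{2,k}=\zeta\,(q_1^0)^c,\]
so $P$ lies on the torus translate $q_2=\zeta q_1^c$, and it lies on $C$ by hypothesis. If the limit $c$ is not itself one of the modular exponents, the conclusion is still a torus translate in the sense of Definition \ref{tori}, which is all the lemma claims.
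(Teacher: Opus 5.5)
Your argument is correct and is essentially the paper's proof: fix the orientation of the tori along a subsequence, use compactness of $\zed_p$ to get $c_k\to c$, use Lemma \ref{uniform} to see that $q_{1,k}^{c_k}\to (q_1^0)^c\neq 0$ so that $\zeta_k$ converges and is therefore eventually constant by discreteness of $p$-power roots of unity, then pass to the limit. Your first, vaguer paragraph about $\zeta_k$ lying in $B_{R'}(1)\cdot(\text{bounded set})$ is not needed, since the convergence argument that follows already suffices; it could also be made precise by noting that all $q_{i,k}$ eventually lie in a fixed subgroup $B_R(1)$ with $R<1$, so $\zeta_k\in B_R(1)$ as in the proof of Lemma \ref{listoftori}.
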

	
	\begin{proof}	
		Suppose $P =(q_1^0,q_2^0)$ (in Serre-Tate coordinates) is an accumulation point of $L$. With notation as above, that would mean that there exist $q_1^{(n)}\to q_1^0$, $q_2^{(n)}\to q_2^0$, $\zeta_n$ $p$-power roots of unity and $a_n,b_n\in \zed_p$ such that $q_2^{b_n} = q_1^{a_n}$ are modular tori, $(q_2^{(n)})^{b_n}= (q_1^{(n)})^{a_n}$ and $(q_1^{(n)},q_2^{(n)})$ lies on $C$. Infinitely many must satisfy either $v_p(a_n)\geq v_p(b_n)$ or $v_p(a_n)\leq v_p(b_n)$, so after taking a subsequence and relabelling, assume the former. Then setting $c_n = \frac{a_n}{b_n}$, there exist  $p$-power roots of unity $\zeta_n$ such that $q_2^{(n)}= \zeta_n(q_1^{(n)})^{c_n}$.
		
		Since $\zed_p$ is compact, after taking a further subsequence, may assume $c_n\to c\in \zed_p$. Taking the limit in $n$, $q_2^0=\lim_{n\to\infty}\zeta_n(q_1^{(n)})^{c_n}$. Then, by Lemma \ref{uniform}, $(q_1^{(n)})^{c_n}\to (q_1^0)^c\ne0$ so the sequence $\zeta_n$ converges too. Since roots of unity are isolated, the sequence $\zeta_n$ is eventually constant, say equal to $\zeta$. Thus $q_2^0 = \zeta(q_1^0)^c$. So to be an accumulation point of $L$, $P$ must lie on a torus translate (though not necessarily a modular torus translate).
	\end{proof}

	For the question of whether $P=(q_1^0,q_2^0)$ is an accumulation point of the locus $L$, consider intersections of the torus $q_2= \zeta q_1^c$ with $C$ near $P$. Write $C=\mathbb{V}(F)$ for $F\in \oh_p[J_1,J_2]$. First take the coordinates $s_1 = q_1 -1$ and $s_2 = q_2-\zeta$, both running over $B_1(0)$. Using the change of coordinates to Serre-Tate coordinates as in \S\ref{FinishProof}, convert the equation $F(j_1,j_2)=0$ into $s$-coordinates and then substitute in $s_2 = \zeta(1+s_1)^c-\zeta$ to yield a power series in just the variable $s_1$. Lastly, recentring at $q_1^0-1$ (that is, letting $w_1 = q_1-q_1^0$) gives a one variable power series \[\sum_{i\geq0} \beta_i(\zeta,c)w_1^i,\] with coefficients $\beta_i(\zeta,c)$ continuous in $c$ and satisfying $v_p(\beta_i(\zeta,c))\geq 0$ (recall $P$ was an integral point on $C$ since the corresponding elliptic curves had good reduction). If $w_1$ is a zero of this power series then the point with $q$-coordinates $(q_1^0+w_1,\zeta(q_1^0+w_1)^c)$ lies on $C$.

	To show that $P$ is an accumulation point of $L$ is to find a sequence $(c_n)$ coming from modular curves and $\zeta_n$ roots of unity such that there are solutions to equations of the form
	\[\sum_{i\geq0} \beta_i(\zeta_n,c_n)w_1^i=0.\] with $|w_1|$ positive but arbitrarily small. Now split into cases according to whether or not the coordinates of the point $P$ correspond to CM lifts. Throughout, $K$ will denote a finite extension of $W(\fpbar)[1/p]$.

	\subsubsection{Neither Coordinate is a CM $j$-invariant}
	
	\begin{prop} \label{nonCM}
		Let $P=(j_1^0,j_2^0)$ be an integral point on $C$ with neither coordinate a CM $j$-invariant. Then $P$ is an accumulation point of $L$ if and only if $C$ lies on a torus in Serre-Tate coordinates.
	\end{prop}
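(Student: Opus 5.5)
Throughout I read the statement as in Theorem \ref{CasesThm}(a): $P$ is an accumulation point of $L$ if and only if $P$ lies on a torus translate $q_2=\zeta q_1^c$. The \emph{only if} direction is exactly Lemma \ref{MustTorus}, so the work is in the converse.

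Suppose $P=(q_1^0,q_2^0)$ satisfies $q_2^0=\zeta (q_1^0)^c$ for some $c\in\zed_p$ and some $p$-power root of unity $\zeta$.

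\textbf{Approximating by modular tori.} By Corollary \ref{powercalculation}, choose $c_n=\bar z_n/z_n$ coming from modular curves with $c_n\to c$ and $c_n\ne c$. A priori $\zeta$ is only guaranteed to appear in translates of modular tori when $v_p(z_n)$ is large enough. To arrange this, I replace $z_n$ by $p^k z_n$: this stays in the ideal $I$ and leaves the ratio $c_n$ unchanged. Then $q_2=\zeta q_1^{c_n}$ is a branch of $\bigcup_m Y_0(m)$ for every $n$.

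\textbf{Setting up the power series.} Form the series $g_n(w_1)=\sum_i\beta_i(\zeta,c_n)w_1^i$ as set up above, centred at $q_1^0$. Since $P$ lies on both $C$ and the limiting torus, $\beta_0(\zeta,c)=0$. Continuity of the $\beta_i$ in $c$ (via Lemma \ref{uniform}) then gives $\beta_0(\zeta,c_n)\to0$.

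Let $m\ge1$ be minimal with $\beta_m(\zeta,c)\neq0$. Such an $m$ exists unless $C$ contains the torus translate $q_2=\zeta q_1^c$ near $P$. I treat that as a degenerate case and exclude it separately: an algebraic non-Shimura $C$ cannot contain an analytic torus branch unless it is modular, and in the modular case the statement is trivial. Making this exclusion rigorous is the step I expect to need the most care. If it resists, I would instead directly produce intersection points of the torus with the $q_2=\zeta q_1^{c_n}$, namely points with $q_1^{c-c_n}$ a root of unity, and check that they accumulate at $P$.

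\textbf{Producing roots near $P$.} Fix a small radius $\rho=|\pi|^k$ and substitute $w_1=\pi^k t$ with $|t|\le1$. For $n$ large, continuity gives $|\beta_j(\zeta,c_n)|$ close to $|\beta_j(\zeta,c)|$ for $j\le m$, while $|\beta_0(\zeta,c_n)|$ is tiny. Hence the maximal-modulus coefficient of the rescaled series has index at least $1$, and Strassman's Theorem (Theorem \ref{Strass}, lower bound over $\cp$) yields a root $w_1$ with $|w_1|\le\rho$. The corresponding point $(q_1^0+w_1,\zeta(q_1^0+w_1)^{c_n})$ lies on $C\cap\bigcup_m Y_0(m)$.

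\textbf{The roots are not $P$ itself.} Because $E_1$ is not CM, $q_1^0$ is not a root of unity. If $w_1=0$, then $(q_1^0)^{c_n-c}=1$ with $c_n\ne c$, which forces $q_1^0$ to be a root of unity, a contradiction. So $w_1\neq0$. Letting $\rho\to0$ exhibits points of $L\setminus\{P\}$ arbitrarily close to $P$, so $P$ is an accumulation point.
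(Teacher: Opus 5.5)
Your proposal is correct and is essentially the paper's proof. It uses the same ingredients: Lemma~\ref{MustTorus} for necessity, modular exponents $c_n\to c$ with $c_n\neq c$ from Corollary~\ref{powercalculation}, a nonvanishing $\beta_j(\zeta,c)$ with $j\geq 1$ from the non-Shimura hypothesis, Strassman's Theorem on a shrinking disc, and the non-CM hypothesis to exclude the root $w_1=0$. Your replacement of $z_n$ by $p^k z_n$ is legitimate, since any lift of $p^k\bar{f}$ is an isogeny and so gives a point of $\bigcup_m Y_0(m)$; it fills in why the $\zeta$-translate is modular, which the paper leaves implicit. For the degenerate case, keep your primary justification, which is also what the paper relies on (the identity principle plus Moonen's theorem, as in \S\ref{Relate}). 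Your fallback would fail: points on both $q_2=\zeta q_1^{c}$ and $q_2=\zeta' q_1^{c_n}$ satisfy $(c-c_n)\log q_1=0$, so $q_1$ is a $p$-power root of unity, and such points cannot accumulate at the non-CM point $P$.
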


	\begin{proof}
		As with the notation above, let $(q_1^0,q_2^0)$ be the Serre-Tate coordinates of $P$ and write $s_i^0 = q_i^0-1$.
		
		The only $\zed_p$ subtorus translates on which $P$ could lie corresponds to $c = \frac{\log({1+s_2^0})}{\log({1+s_1^0})}$, (or its inverse depending on valuation). If this ratio is not defined over $\zed_p$ then there is no chance of being an accumulation point of $L$ by Lemma \ref{MustTorus}.
		
		On the other hand, to show that any $P$ which lies on a torus is an accumulation point of $L$, suppose this value $c$ does lie in $\zed_p$. Then (after possibly swapping the coordinates) $q_2^0 = \zeta(q_1^0)^c$ for some $p$-power root of unity $\zeta$. Fix $m$ and try to find an intersection point within a distance $1/p^m$ of $P$. The assumption that both coordinates are non-CM means $c\ne0$. Since $C$ is not a Shimura subvariety, there exists $j\geq1$ with $\beta_j(\zeta,c)\ne0$ (in the notation set up after Lemma \ref{MustTorus}). Now take a sequence $c_n\to c$ with $c_n\in \zed_p\setminus\{c\}$ coming from modular curves (which is possible by Corollary \ref{powercalculation}, as such are dense and $\zed_p$ has no isolated points). Then, as the coefficients are continuous in $c$, $\lim_{n\to\infty}\beta_j(\zeta,c_n)= \beta_j(\zeta,c)\ne0 $. Therefore (after perhaps throwing away early terms) the sequence $(\beta_j(\zeta,c_n))_n$ is bounded from below in absolute value uniformly in $n$, by $\frac{1}{p^a}$, say. Then given $m$, choose $n$ such that $jm+a<v_p(\beta_0(\zeta,c_n))<\infty$ (the left hand inequality is possible because $c_n\to c$ and $\beta_0(\zeta,c)=0$, and the right hand follows since $c_n\ne c$ and $P$ lies on only one torus translate, $q_2= \zeta q_1^c$). Thus $|\beta_0(\zeta,c_n)|<|\beta_j(\zeta,c_n)||p^{mj}|$, so Strassman's Theorem then implies that there is a solution $x=x_n$ to the equation 		
		\[\beta_0(\zeta,c_n)+\beta_1(\zeta,c_n)p^mx + \beta_2(\zeta,c_n)p^{2m}x^2 + ... +\beta_j(\zeta,c_n)p^{jm}x^j+... \]		
		with $0<|x_n|<1$ ($x=0$ is not a solution as $P$ is not on any torus translate with exponent $c_n\ne c$). That is, there is an intersection of $C$ with the torus translate by $\zeta$ corresponding to $c_n$ in the punctured disc of radius $1/p^m$ around $P$ as required. (Note that so far, this only gives that the first coordinate is close to that of $P$, but if $s_1^{(n)}\colonequals p^nx_n$, then as $c_n\to c$ and $s_1^{(n)}\to s_1^0$, it follows from Lemma \ref{uniform} that the second coordinates also converge: $(1+s_1^{(n)})^{c_n}\to (1+s_1^0)^c= q_2^0$).
	\end{proof}

	\subsubsection{Exactly One Coordinate is a CM $j$-invariant}
	
	\begin{prop}
		Let $P= (j_1^0,j_2^0)$ be an integral point on $C$ with only one coordinate a CM $j$-invariant. Then $P$ is not an accumulation point of the locus $L=C\cap \bigcup_nY_0(n)$.
	\end{prop}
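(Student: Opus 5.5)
We will argue by contradiction using Lemma \ref{MustTorus} and its proof. Suppose, after possibly swapping coordinates, that $j_1^0$ is a CM $j$-invariant and $j_2^0$ is not. In Serre-Tate coordinates this means $q_1^0$ is a $p$-power root of unity and $q_2^0$ is not a root of unity, by the correspondence between torsion points of $1+\mathfrak{m}_p$ and CM lifts.

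Suppose $P$ is an accumulation point of $L$. As in the proof of Lemma \ref{MustTorus}, we obtain points $(q_1^{(n)},q_2^{(n)})\in C$ converging to $(q_1^0,q_2^0)$ and lying on modular torus translates. The translates have the form $q_2=\zeta_n q_1^{c_n}$ or $q_1=\zeta_n q_2^{c_n}$ according to which exponent has smaller valuation. After passing to a subsequence, one of the two forms occurs for all $n$, and $c_n\to c\in\zed_p$ by compactness. Lemma \ref{uniform} together with discreteness of roots of unity then makes $\zeta_n$ eventually constant, equal to some $\zeta$. Taking the limit gives $q_2^0=\zeta (q_1^0)^c$ or $q_1^0=\zeta(q_2^0)^c$.

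Consider the first form. Since $q_1^0$ is a root of unity, the right-hand side $\zeta(q_1^0)^c$ is a $p$-power root of unity. A small check is needed here: $(q_1^0)^c$ for $c\in\zed_p$ is defined by continuity, and it is a $p$-power root of unity because $q_1^0$ lies in the closed, finite-exponent cyclic group it generates. This contradicts the assumption that $q_2^0$ is not a root of unity.

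Consider the second form, $q_1^0=\zeta(q_2^0)^c$. Then $(q_2^0)^c$ is a root of unity, so $(q_2^0)^{cp^k}=1$ for some $k$. Since $q_2^0$ is non-torsion, $\log q_2^0\neq 0$, and hence $c=0$. So the limiting torus translate is $q_1=\zeta$, which is the CM fibre $\{j_1^0\}\times\pee^1$. This is the genuinely non-trivial case, and we expect it to be the main obstacle.

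To handle it, we need that the approximating modular tori with $c_n\to 0$ meet $C$ only at points far from $P$, or only at $P$ itself. Write $c_n=\bar z_n/z_n$ with $v_p(\bar z_n)\ge v_p(z_n)$, so that $c_n\to0$ forces $v_p(\bar z_n)-v_p(z_n)\to\infty$. Points on the translate $q_1=\zeta q_2^{c_n}$ with $q_2$ near $q_2^0$ satisfy $|q_1-\zeta|\to 0$. Along the branch we have $(q_1\zeta^{-1})^{z_n}=q_2^{\bar z_n}$.

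The first idea is to exploit that $C$ is not the vertical fibre. Near $P$, $C$ is then cut out by a power series whose restriction to $q_1=q_1^0$ is nonzero in $w_2=q_2-q_2^0$. This lets us write $C$ locally either as a graph $q_1-q_1^0=g(w_2)$, or as a finite union of Puiseux-type branches. We then compare with the torus, where $q_1-\zeta=\zeta((q_2)^{c_n}-1)\approx \zeta c_n\log q_2$, a quantity of size exactly $|c_n|\cdot|\log q_2^0|$ since $\log q_2^0\ne0$. On $C$, near $P$, $|q_1-q_1^0|$ is controlled by $|w_2|^{k}$ for the first nonvanishing order $k$ of $g$. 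We need $\zeta=q_1^0$, which holds eventually. An intersection then forces $|c_n||\log q_2^0|\asymp|g(w_2)|\le |w_2|^k$.

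This alone does not exclude intersections, so the second ingredient will be the arithmetic constraint that these are modular tori. We will try to show that modular tori with exponent $c_n$ of large valuation meet the CM fibre neighbourhood only in a way incompatible with $q_2$ converging to a non-torsion $q_2^0$. Concretely, we will use Strassman's Theorem on the one-variable series $\sum_i\beta_i(\zeta,c_n)w_2^i$ with recentring at $q_2^0$. The constant term $\beta_0(\zeta,c_n)$ is then a unit multiple of $c_n\log q_2^0$, while the linear coefficient is dominated by the derivative of $g$. The aim is to show that no root lies in a small punctured disc, by checking that the constant term dominates, or else to conclude that $C$ contains the fibre. We expect the careful valuation comparison in this step to be where the real work lies.
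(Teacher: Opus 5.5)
\textbf{There is a genuine gap, and it is exactly in the case you flagged.} Your case split is correct, and more careful than the paper's own one-line proof. The paper argues that no $\zed_p$-power of a non-torsion element of $1+\mathfrak{m}_p$ is a root of unity, so $P$ lies on no torus and Lemma \ref{MustTorus} applies. That overlooks precisely the exponent $c=0$ in your second form: the degenerate translate $q_1=\zeta$, i.e.\ the CM fibre $\{j_1^0\}\times\pee^1$. Your first form is correctly excluded.

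Your proposal leaves $c=0$ open, and the strategy you sketch for it cannot work, because the constant term does not dominate. In your notation, on the translate $q_1=q_1^0q_2^{c_n}$ the intersection equation is $g(w_2)-q_1^0\bigl((q_2^0+w_2)^{c_n}-1\bigr)=0$. Its constant term has absolute value $|c_n||\log q_2^0|$, which is nonzero but tends to $0$. Its coefficient of $w_2^k$ tends to $g_k\neq 0$. After rescaling $w_2=p^mx$, for all large $n$ the maximum in Strassman's Theorem is attained at a positive index. So there is a root with $0<|w_2|\le p^{-m}$, of size roughly $(|c_n\log q_2^0|/|g_k|)^{1/k}$. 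Hence $P$ is an accumulation point of $L$ as soon as modular translates $q_1=q_1^0q_2^{c_n}$ with $0\neq c_n\to 0$ exist.

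\textbf{Such translates do exist, so the statement is false here.} Let $j_1^0=j(E^{\mathrm{can}})$ be the canonical lift of an ordinary $\bar E/\eff_{p^r}$, so $q_1^0=1$. The canonical-subgroup quotient $E\mapsto E/E[p^{rk}]^\circ$ is a cyclic $p^{rk}$-isogeny lifting the $k$-th power of the $p^r$-Frobenius endomorphism of $\bar E$. By Theorem \ref{toruscondition} it traces out the branch $q_1=q_2^{c_k}$ of $Y_0(p^{rk})$ with $v_p(c_k)=rk$; Corollary \ref{powercalculation} with the coordinates swapped also predicts modular exponents near $0$.

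The cleanest counterexample is $C=\pee^1\times\{j(E_2^0)\}$, where $E_2^0$ is a non-CM lift of $\bar E$. This $C$ is not a Shimura subvariety in the paper's sense. The points $(j(E_2^0/E_2^0[p^{rk}]^\circ),\,j(E_2^0))$ lie in $C\cap Y_0(p^{rk})$ and are non-CM, hence different from $P=(j(E^{\mathrm{can}}),j(E_2^0))$. They converge to $P$ because their Serre--Tate coordinates $(q_2^0)^{c_k}$ tend to $1$; this is the classical fact that iterated canonical-subgroup quotients converge to the canonical lift. By the Strassman computation above, the same happens for any $C$ through $P$ transverse to $q_2=q_2^0$, where your graph description applies.

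So no finer valuation estimate can close this case. The proposition, and with it part (b) of Theorem \ref{CasesThm}, is false as stated. A correct version must explicitly exclude, or separately account for, accumulation onto the CM fibre through $P$.
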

	
	\begin{proof}
		The equality $q_1^a = q_2^b$ can never be satisfied for $a,b\in \zed_p$ if exactly one of $q_1$ or $q_2$ is a root of unity, since no $\zed_p$-power of a number that is not a root of unity can be a root of unity. So $P$ does not lie on a torus, whence $P$ cannot be an accumulation point of $L$ by Lemma \ref{MustTorus}.
	\end{proof}

	\subsubsection{Both Coordinates are CM $j$-invariants}\label{CMcase}
	
	Suppose $(j_1^0,j_2^0)$ is a point on $C$ with both coordinates CM lifts. Let $(q_1^0,q_2^0)$ be the corresponding Serre-Tate coordinates, which are both roots of unity. Let $a\geq0$ be such that $q_1^0$ is a primitive $(p^a)^\text{th}$ root of unity, and suppose (after perhaps swapping coordinates) that $P$ lies on the torus translate $q_2 = \zeta q_1^c$.
	
	\begin{prop}
		 A point $P$ which lies on the torus translate $q_2 = \zeta q_1^c$ is an accumulation point of $L=C\cap \bigcup_nY_0(n)$ if and only if $C$ is tangent at $P$ to a torus translate $q_2= \zeta q_1^{c'}$ with $c'\equiv c \pmod{p^a}$.
	\end{prop}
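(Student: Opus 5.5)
The plan is to exploit a feature special to this case: since $q_1^0$ and $q_2^0$ are roots of unity, many torus translates pass through $P$ \emph{exactly}, not just nearby. Accumulation then becomes a question about small \emph{nonzero} roots of a power series with zero constant term, and that is governed by its linear coefficient, which is exactly tangency.

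\textbf{Step 1 (which tori pass through $P$).} First I would record that a translate $q_2=\zeta q_1^{c'}$ passes through $P$ if and only if $(q_1^0)^{c'-c}=1$. Since $q_1^0$ is a primitive $(p^a)^\text{th}$ root of unity, this holds if and only if $c'\equiv c \pmod{p^a}$. In the recentred coordinate $w_1=q_1-q_1^0$ set up after Lemma \ref{MustTorus}, this says $\beta_0(\zeta,c')=0$ for every $c'$ in the open set $c+p^a\zed_p$.

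Differentiating $\zeta q_1^{c'}$ at $q_1^0$ gives slope $c'q_2^0/q_1^0$, so distinct exponents $c'$ give distinct tangent directions at $P$. The coefficient $\beta_1(\zeta,c')$ is, up to a unit, the partial derivative of $F$ along the torus direction. So $\beta_1(\zeta,c')=0$ exactly when $C$ is tangent at $P$ to $q_2=\zeta q_1^{c'}$. Because $C$ is not a Shimura subvariety, for each such $c'$ some $\beta_j(\zeta,c')\neq 0$.

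\textbf{Step 2 (necessity).} Suppose $P$ is an accumulation point. As in the proof of Lemma \ref{MustTorus}, there are points of $L\setminus\{P\}$ converging to $P$ lying on modular translates $q_2=\zeta_n q_1^{c_n}$. After passing to a subsequence, $\zeta_n$ is eventually a constant $\zeta'$ and $c_n\to c''$ with $q_2^0=\zeta'(q_1^0)^{c''}$. Rewriting the translate by $\zeta'$ in the normal form through $P$ via Step 1, I may assume $\zeta'=\zeta$, so $c_n\equiv c''\equiv c \pmod{p^a}$ eventually. Then every curve $q_2=\zeta q_1^{c_n}$ passes through $P$, and so $\beta_0(\zeta,c_n)=0$. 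The series $w_1\mapsto\sum_{i\ge1}\beta_i(\zeta,c_n)w_1^i$ must therefore have nonzero roots $w^{(n)}\to0$.

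Dividing by $w_1$ and applying Strassman's Theorem on the disc of radius $|w^{(n)}|$ forces $|\beta_1(\zeta,c_n)|\le \max_{j\ge2}|\beta_j(\zeta,c_n)|\,|w^{(n)}|^{j-1}$, which tends to $0$. By continuity in $c$, $\beta_1(\zeta,c'')=0$, so $C$ is tangent at $P$ to $q_2=\zeta q_1^{c''}$ with $c''\equiv c\pmod{p^a}$.

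\textbf{Step 3 (sufficiency).} Conversely, suppose $\beta_1(\zeta,c')=0$ with $c'\equiv c\pmod{p^a}$. Pick $j\ge2$ with $\beta_j(\zeta,c')\ne0$, and use Corollary \ref{powercalculation} to choose modular exponents $c_n\to c'$ with $c_n\ne c'$ and $c_n\equiv c'\pmod{p^a}$. By Step 1 these give $\beta_0=0$ and $\beta_1(\zeta,c_n)\to0$, and $\beta_1(\zeta,c_n)\neq0$ because $c'$ is the unique tangent exponent. Meanwhile $|\beta_j(\zeta,c_n)|$ stays bounded below, by $p^{-b}$ say.

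Fix $m$ and choose $n$ with $v_p(\beta_1(\zeta,c_n))>(j-1)m+b$. Then Strassman's Theorem applied to $\sum_{i\ge1}\beta_i(\zeta,c_n)p^{im}x^{i-1}$ produces a root $x$ with $|x|\le1$, which is nonzero because $\beta_1\neq0$. This gives a point of $C\cap\{q_2=\zeta q_1^{c_n}\}$ at distance at most $p^{-m}$ from $P$ and different from $P$. Its second coordinate also converges to $q_2^0$ by Lemma \ref{uniform}.

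\textbf{Main obstacle.} The delicate step is the root-of-unity bookkeeping. In Corollary \ref{powercalculation}, the modular locus only contains the translates $\zeta q_1^{\bar z/z}$ with $\zeta$ of order dividing $p^{v_p(z)}$. So in Step 3 I must ensure the approximating $z$ can be chosen with $v_p(z)$ large enough that the fixed $\zeta$ is permitted. My first attempt is to revisit the proof of Lemma \ref{approxlemma}: there $v_p(z)=S-R$, and $S$ can be increased by replacing the initial element $\beta+\gamma\sqrt{-d}$ with a multiple by a suitable power of an element of $I$ of positive $p$-valuation. If the ideal offers no such freedom, I would instead allow the translate $\zeta'$ to vary and match it through the identity of Step 1.
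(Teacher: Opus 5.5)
Your three steps follow the paper's proof: Step 1 is its opening paragraph, Step 2 is its necessity argument ($\beta_1\to0$, then continuity in $c$), and Step 3 is its Strassman argument with modular exponents $c_n\to c'$. The normalisation in Step 2, however, is not valid.

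The curves $q_2=\zeta' q_1^{c_n}$ and $q_2=\zeta q_1^{c'}$ meet at $P$ but are different curves unless $\zeta'=\zeta$ and $c'=c_n$. The limit only gives $\zeta'=q_2^0(q_1^0)^{-c''}=\zeta(q_1^0)^{c-c''}$ with $c''\in\zed_p$ unconstrained, so $\zeta'\neq\zeta$ whenever $c''\not\equiv c\pmod{p^a}$. What Step 2 actually proves is that $C$ is tangent at $P$ to \emph{some} torus translate through $P$.

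This cannot be upgraded to the prescribed $\zeta$, because that version is false. Take $p$ odd, $\bar E_1=\bar E_2$ ordinary, $\omega$ a primitive $p$th root of unity and $P=(\omega,\omega)$. This $P$ lies on $q_2=q_1$, so $\zeta=1$, $c=1$, $a=1$. Let $C$ be a line in $j$-coordinates through $P$ whose Serre--Tate slope at $P$ is $2$. Then $C$ is tangent at $P$ to $q_2=\omega^{-1}q_1^2$. Your Step 3, run with translate $\omega^{-1}$ and modular exponents $\bar z/z\to 2$ with $v_p(z)\ge 1$, shows that $P$ is an accumulation point. Yet every $q_2=q_1^{c'}$ with $c'\equiv 1\pmod p$ has slope $c'\neq 2$ at $P$, so $C$ is tangent to none of them.

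The paper's proof has the same slip. It asserts that the approximating exponents lie in $c+p^a\zed_p$, which (given that the translates pass through $P$) is equivalent to $\zeta_m=\zeta$, and its reduction paragraph does not justify this. The statement that is true is the form in Theorem \ref{CasesThm}(c): $C$ is tangent at $P$ to a torus translate passing through $P$. Your Steps 2 and 3 prove exactly that once you carry $\zeta'$ along instead of normalising it away.

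Your ``main obstacle'' is genuine, and the paper passes over it silently. The fix is simpler than reworking Lemma \ref{approxlemma}: replace $z$ by $p^k z$. This stays in $I$, leaves $\bar z/z$ unchanged, preserves $v_p(z)\le v_p(\bar z)$, and raises $v_p(z)$ by $k$, so any prescribed $p$-power translate becomes admissible. The isogeny $p^k\bar f$ is no longer cyclic. However, any lift $f\colon E_1\to E_2$ factors as $[m]$ followed by a cyclic isogeny, so the lifted pair still lies on some $Y_0(n')$ and hence in $L$.
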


	\begin{proof}
		
		Since $j_1^0$ is a CM $j$-invariant, $q_1^0$ is a primitive $(p^a)^\text{th}$ root of unity for some $a\geq0$. Thus $q_2^0 = \zeta (q_1^0)^\text{c'}$ whenever $c'\equiv c\pmod{p^a}$. In particular, $P$ lies on infinitely many modular torus translates. 
		
		Firstly, observe that any accumulation point of $L$ can be realised by considering only points on torus translates containing $P$ itself. Indeed, given sequences $q_1^{(n)}\to q_1^0$, $q_2^{(n)}\to q_2^0$, $\zeta_n$ $p$-power roots of unity and $c_n\in \zed_p$ such that 
		\begin{align}\label{passthroughP}
			 q_2^{(n)}=\zeta_n(q_1^{(n)})^{c_n}
		\end{align}
		 for each $n$, replace $c_n$ by a convergent subsequence which in particular must eventually lie in only one congruence class modulo $p^a$, say $d+p^a\zed_p$. Then just as in the proof of Lemma \ref{MustTorus} (and using Corollary \ref{uniform} again), the $\zeta_n$ are eventually constant, say equal to $\zeta'$ and taking the limit in (\ref{passthroughP}), obtain $q_2^0 = \zeta'(q_1^0)^{d'}$ for some $d'\in d+p^a\zed_p$.  But now for $n$ large enough that $c_n\in d+p^a\zed_p$, $(q_1^0)^{d'}=(q_1^0)^{c_n}$. So $(q_1^0,q_2^0)$ lies on the torus $q_2 = \zeta'q_1^{c_n}$. Thus to study accumulation points of $L$ it suffices to consider only torus translates passing through $P$.

		Suppose first that $P$ is an accumulation point of $L$. Then for every $m>0$ there exist $\zeta_m$ a root of unity, $c_m\in c+p^a\zed_p$ coming from modular curves and $j_m>1$ such that $v_p(\beta_1(\zeta_m,c_m)) \geq v_p(\beta_{j_m}(\zeta_m, c_m))+mj_m$. (As above, discreteness of roots of unity means $\beta_0(\zeta_m,c_m)$ has to be 0 - i.e. $P$ lies on the corresponding torus translate - when $m$ is sufficiently large). After taking a subsequence, $c_m$ has a limit, $c'\in c+p^a\zed_p$, say. Then since everything is integral, $\lim_{m\to\infty}v_p(\beta_1(\zeta_m,c_m))=\infty$. Since there are only finitely many possible $\zeta_m$ which can appear (the order of $\zeta_m$ is bounded by the maximum order of $q_1^0$ and $q_2^0$), one, $\zeta'$, say, must appear infinitely often. So taking the corresponding subsequence of the exponents, may assume $\beta_1(\zeta',c')=0$. Thus $P$ is a repeated root of the equation of intersection of $C$ and the torus translate $q_2 = \zeta' q_1^{c'}$. Thus $C$ is tangent to this torus translate at $P$.

		Conversely, suppose $C$ is tangent at $P$ to the torus translate $q_2 = \zeta'q_1^{c'}$ with $c'\in c+p^a\zed_p$. First note that $C$ can be tangent at $P$ to at most one modular curve with exponent congruent to $c$ modulo $p^a$, since if $\zeta_1 c_1(1+s_1^0)^{c_1-1}=\zeta_2 c_2(1+s_1^0)^{c_2-1}$ with $c_1,c_2\in c+p^a\zed_p$, then $(1+s_1^0)^{c_1-1}=(1+s_1^0)^{c_2-1}\ne0$, meaning that $\zeta_1c_1=\zeta_2c_2$. Then $\zeta_1/\zeta_2=c_2/c_1$ is a $p$-power root of unity in $\zed_p$, hence equal to $1$, and so $c_1 =c_2$ too.

		Then by Corollary \ref{powercalculation}, there exist $c_n\ne c'$ coming from modular curves with $c_n\to c'$. Since $C$ is not a Shimura subvariety, there is $j$ minimal with $\beta_j(\zeta',c')\ne0$. Then since $c_n\to c'$, $\beta_1(\zeta',c_n)\to \beta_1(\zeta',c')=0$ but $v_p(\beta_j(\zeta',c_n))$ is bounded as $n$ ranges, say by a bound $A\in \enn$. Thus to find a second root in the ball of radius $1/p^m$ around $P$, if $N$ is such that for all $n>N$, $|\beta_1(\zeta',c_n)|<1/p^{mj+A}$, have $|\beta_1(\zeta',c_n)||p^m|<|\beta_j(\zeta',c')||p^{mj}|$. Now recalling that $\beta_0(\zeta',c')=0$, by Strassman's Theorem there will be at least two points of intersection (counted with multiplicity) in the ball of radius $1/p^m$ at $P$.	Since $c_n\ne c'$ for any $n$, the root at the centre is simple in each case, so there will be at least one point of the locus $C\cap \bigcup_nY_0(n)$ in every punctured ball around $P$.	
	\end{proof}
	
	Now that a complete characterisation of the accumulation points has been obtained, it is clear that for any curve $C$, there will be an accumulation point of $L$.
	
	\begin{cor} \label{always}
		For any curve $C$ defined over \cp, the locus $L =C\cap \bigcup_nY_0(n)$ has an accumulation point.
	\end{cor}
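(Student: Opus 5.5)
The plan is to find, on any curve $C$, a point $P$ that satisfies the hypotheses of Theorem \ref{CasesThm}(a) and lies on a torus; then Proposition \ref{nonCM} shows that $P$ is an accumulation point of $L$. If $C$ is a Shimura subvariety there is nothing to prove. If $C$ is $Y_0(m)$, every point is an accumulation point. If $C$ is $\{j_1\}\times\pee^1$ with $j_1$ a CM $j$-invariant, then $C$ meets $Y_0(n)$ at the points $(j_1,j')$ for all $j'$ that are $n$-isogenous to $j_1$. These are CM points whose reductions lie in a single ordinary isogeny class, and infinitely many of them lie in one residue disc, so by compactness they accumulate. So from now on assume $C$ is not a Shimura subvariety, as in the rest of the section.

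\textbf{Step 1: choose a residue disc.} Since $C$ is a curve in $\pee^1\times\pee^1$ that is not horizontal or vertical, at least one projection is dominant. Suppose first that neither projection is constant. Fix an ordinary $\bar{E}$ over $\fpbar$ and consider the residue disc of $(\bar{E},\bar{E})$, or more generally of any isogenous ordinary pair $(\bar{E}_1,\bar{E}_2)$. I claim $C$ meets some such disc. The reduction $\bar{C}$ has dimension one and surjects onto one of the factors. So for an ordinary $\bar{j}_1$ (all but finitely many $\bar{j}_1$ are ordinary) there is a point $(\bar{j}_1,\bar{j}_2)\in\bar{C}$, and by Hensel/lifting this gives a point of $C$ in that residue disc. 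The difficulty is to make $\bar{j}_2$ ordinary and isogenous to $\bar{j}_1$. For this I use that over $\fpbar$ every ordinary $j$ is defined over a finite field, and I exploit the fact that each coordinate ranges over infinitely many values on $\bar{C}$.

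\textbf{Step 2: produce a point on a torus.} Inside the disc $B_1((\bar{E}_1^{\can},\bar{E}_2^{\can}))$, write $C$ in Serre-Tate coordinates as $F_\zeta(s_1,s_2)=0$. Take a modular torus $q_2=q_1^c$ with $c\in\zed_p$. The intersection count is governed by the power series $\sum\beta_i(c)s_1^i$ from \S\ref{FinishProof}. If $C$ passes through $(1,1)$ then every torus meets $C$ there. Otherwise I use Strassman's Theorem over $\cp$ (the algebraic closedness gives the existence of roots) to find a root with $|s_1|<1$ as soon as some higher coefficient dominates $|\beta_0|$. The simplest route is to take $c_n\to c$ with $c_n$ modular (Corollary \ref{powercalculation}), together with a point of $C$ in the disc that lies on the torus $q_2=q_1^c$. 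Such a point exists because a one-parameter family of tori sweeps out the disc: for a given point $(q_1,q_2)$ with $q_1$ non-torsion, take $c=\log q_2/\log q_1$, which lies in $\zed_p$ provided $|\log q_2|\le|\log q_1|$. So I pick a point of $C$ in the disc with $|q_2-1|\le|q_1-1|$ and both coordinates non-CM. Such a point exists after possibly swapping coordinates, because only finitely many points of $C$ are special and the points of $C$ in the disc are uncountable.

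\textbf{Step 3: conclude.} That point $P$ satisfies the hypotheses of Proposition \ref{nonCM}: neither coordinate is CM, and $P$ lies on a torus. Hence $P$ is an accumulation point of $L$.

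The main obstacle is Step 1, namely guaranteeing that $\bar{C}$ contains an ordinary pair with isogenous coordinates. When one projection of $\bar{C}$ is constant, it equals a single ordinary $\bar{j}_2$, and any ordinary $\bar{j}_1$ isogenous to it works. When both projections are nonconstant, I would argue as follows. $\bar{C}$ has infinitely many $\fpbar$-points. The ordinary isogeny classes over $\fpbar$ are indexed by imaginary quadratic fields in which $p$ splits. A fibre of the first projection is finite, so $\bar{j}_2$ is algebraic over $\eff_p(\bar{j}_1)$. I would then try choosing $\bar{j}_1$ so that the corresponding fibre point is forced into the same class. A cleaner fallback avoids needing $\bar{j}_2$ to be isogenous to $\bar{j}_1$ at all: intersect $\bar{C}$ with the reduction of some $Y_0(n)$. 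Since $\bar{C}$ is not itself a component of a reduced modular curve, Bézout gives an intersection point, and since most of those intersection points are ordinary, this yields the required isogenous ordinary pair on $\bar{C}$.
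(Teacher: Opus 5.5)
There is a genuine gap, and the decisive one is in Step 2. The claim that a one-parameter family of tori sweeps out the disc is false. If $|\log q_2|\le|\log q_1|$, then $c=\log q_2/\log q_1$ lies in $\oh_p$. But tori are indexed by $c\in\zed_p$, and $\zed_p$ is a closed subset of $\oh_p$ with empty interior. For example, take $(q_1,q_2)=(1+p^2,1+p^{5/2})$. Then $|c|=p^{-1/2}$, which is not the absolute value of any element of $\zed_p$. So this point lies on no torus translate $q_2=\zeta q_1^c$, in either orientation. In fact, if your claim held, Theorem \ref{CasesThm}(a) would make every non-CM point of $C$ in an ordinary isogenous residue disc an accumulation point of $L$. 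That contradicts the nowhere-density argument of Section \ref{Relate}, which shows precisely that $\log q_2/\log q_1$ cannot map a ball of $C$ into $\zed_p$ unless $C$ is locally a torus translate. So an arbitrary non-CM point of $C$ in the disc cannot serve as $P$, and Step 2 does not produce a point on a torus at all.

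The missing idea is to take $P$ in $L$ itself. A point of $C\cap Y_0(n)$ in an ordinary residue disc lies on a modular torus translate automatically (Lemma \ref{listoftori}). So all you need is a non-CM point of $L$ in an ordinary disc, and this is how the paper argues:
\begin{enumerate}[(1)]
\item $L$ is infinite.
\item $C$ has only finitely many special points \cite{And98}, and on $L$ a CM coordinate forces the other coordinate to be CM, since isogenies preserve CM. Hence infinitely many points of $L$ are non-CM.
\item Infinitely many points of $L$ lie in ordinary residue discs; this is imported from \cite{CO06}, Proposition 7.3.
\end{enumerate}
Your Step 1 fallback, that most intersection points of $\bar{C}$ with the reduction of $Y_0(n)$ are ordinary, asserts essentially input (3) without any argument. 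It is not automatic. For $C\colon (j_1-\gamma)(j_2-\gamma')=p$ with $\gamma,\gamma'$ lifts of supersingular invariants, every point of $\bar{C}$ has a supersingular coordinate, so $\bar{C}$ contains no ordinary isogenous pair at all. Some genuine input, with hypotheses excluding such curves, is needed here.

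Finally, your treatment of the Shimura cases should be dropped. The phrase ``by compactness'' gives nothing, since $\cp$ is not locally compact. Worse, for $j_1$ CM with ordinary reduction, the points of $L$ on $\{j_1\}\times\pee^1$ in a fixed residue disc are CM lifts. In Serre--Tate coordinates these are $p$-power roots of unity, which do not accumulate in $1+\mathfrak{m}_p$, and distinct residue discs are at distance $1$. So in that case the conclusion actually fails, which is why the paper keeps the standing assumption that $C$ is not a Shimura subvariety.
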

	
	\begin{proof}
		$C$ contains only finitely many points where both coordinates have complex multiplication by \cite{And98}. But the locus $L$ is infinite, so there must contain a point with non-CM coordinates.
		Further, infinitely many intersection points are in ordinary residue discs, by \cite{CO06}, Proposition 7.3. Every point in $L$ satisfying these two conditions is on a torus in Serre-Tate coordinates (in particular, on a modular torus), so is an accumulation point by Proposition \ref{nonCM}.
	\end{proof}

	\section{Relation to Previous Work} \label{Relate}
	
	In \cite{MP09}, as a special case of their main result on jumping loci, Davesh Maulik and Bjorn Poonen obtain the following Proposition. Here, $K$ is again a finite extension of $W(\fpbar)[1/p]$. In this section, $C$ is assumed to avoid points where both coordinates have supersingular reduction in order to make use of the calculations of previous sections. This is not a requirement in \cite{MP09}.
	
	\begin{prop}\label{MP} (Proposition 1.13 in \cite{MP09})
		Let $C$ be an irreducible curve defined over \ok, and $\mathfrak{X}\to C$ be an Abelian scheme over $C$. Then the set		
		\[\{c\in C(\oh_p) \ | \ \End(\mathfrak{X}_{\bar{\eta}})\to \End(\mathfrak{X}_c) \text{ is not an isomorphism}\}\] is $p$-adically nowhere dense.
	\end{prop}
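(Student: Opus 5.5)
The plan is to reduce the statement to the zero loci of a countable family of rigid-analytic functions on residue discs. The main point is to get nowhere density, and not merely meagreness, from that countable union. Since being $p$-adically nowhere dense is a local property, it suffices to work on one residue disc $D\subset C(\oh_p)$ at a time. Choose the disc around a point $c_0$ where $C$ is smooth over $\ok$ and $\mathfrak{X}$ has good reduction; finitely many bad residue classes can be handled by first shrinking $C$.

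\textbf{Step 1 (crystalline trivialisation).} On $D$, the de Rham bundle $H^1_{dR}(\mathfrak{X}/C)$ with its Gauss--Manin connection is trivialised by horizontal sections. These sections converge on the whole open residue disc because the connection comes from the crystal $H^1_\text{cris}(\mathfrak{X}_{\bar c_0})$. This identifies every fibre with one fixed $K$-vector space $V$. The space $\Gamma=\End(\mathfrak{X}_{\bar c_0})\otimes\kew$ embeds in $\End(V)$, and $\Gamma$ is countable because $\End(\mathfrak{X}_{\bar c_0})$ is finitely generated. Let $\operatorname{Fil}^1_t\subset V$ be the Hodge filtration at $t\in D$. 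By Grothendieck--Messing (equivalently Serre--Tate / Katz), an element $\varphi\in\Gamma$ lifts, up to isogeny, to an endomorphism of $\mathfrak{X}_t$ if and only if $\varphi(\operatorname{Fil}^1_t)\subset\operatorname{Fil}^1_t$. Write $g_\varphi(t)$ for the composite $\operatorname{Fil}^1_t\to V\to V/\operatorname{Fil}^1_t$. Its matrix entries are rigid-analytic functions on $D$ that depend $K$-linearly on $\varphi$, say $g_\varphi=\sum_i x_i(\varphi)h_i$ with the $h_i$ fixed and analytic on $D$.

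\textbf{Step 2 (identically vanishing $g_\varphi$ come from generic endomorphisms).} Let $Z\subset\Gamma\otimes K$ be the subspace of those $x$ with $\sum x_ih_i\equiv0$. If $\varphi\in\Gamma\cap Z$, then $\varphi$ lifts over the formal completion of $C$ at $c_0$. By Grothendieck existence it lifts to $\mathfrak{X}$ over $\operatorname{Spec}\widehat{\oh}_{C,c_0}$. Since endomorphism groups of abelian varieties do not grow under extension of algebraically closed fields, and specialisation is injective, $\varphi$ then comes from $\End(\mathfrak{X}_{\bar\eta})$. Consequently the jumping set in $D$ equals
\[J=\bigcup_{\varphi\in\Gamma\setminus Z}\mathbb{V}(g_\varphi),\]
and each $\mathbb{V}(g_\varphi)$ is a discrete subset of $D$.

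\textbf{Step 3 (uniformity).} Fix a closed sub-ball $\bar{B}\subset D$ and rescale it to the unit ball. I would run the compactness argument of Lemma \ref{singletorus} on the unit sphere of a complement $W$ of $Z$ in $\Gamma\otimes\kew_p$; this sphere is compact. Every nonzero $x\in W$ gives a nonzero series, so the Strassman index is bounded uniformly. Hence there is $N(\bar{B})$ such that every $g_\varphi$ with $\varphi\notin Z$ has at most $N(\bar{B})$ zeros in $\bar{B}$. The same bound holds on every sub-ball, by rescaling.

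\textbf{Step 4 (nowhere density, the main obstacle).} Step 3 alone does not make a countable union of finite sets nowhere dense, so the rational structure of $\Gamma$ must enter. I would first treat the key case where the period map $t\mapsto\operatorname{Fil}^1_t$ is nonconstant on $D$; otherwise $Z$ is everything and $J$ is empty. The argument I would try:
\begin{enumerate}[(i)]
\item Suppose $\bar{J}\supset\bar{B}$ for some ball $\bar{B}$.
\item Pick $N+1$ points $t_0,\dots,t_N\in J\cap\bar{B}$, say $t_k\in\mathbb{V}(g_{\varphi_k})$, with pairwise distances bounded below.
\item Normalise the $\varphi_k$ in $\Gamma$ and pass to a limit in the compact unit sphere of $W$, using Lemma-\ref{uniform}-style continuity.
\item For a limit $x$, the function $\sum x_ih_i$ acquires zeros near many points of $\bar{B}$. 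Iterating, it acquires more than $N(\bar{B})$ zeros, or a zero of high order, contradicting Step 3.
\end{enumerate}
The delicate point is (iii)--(iv): zeros of $g_{\varphi_k}$ might escape to the boundary or coalesce in the limit. I would control this with the Weierstrass preparation form of Strassman on slightly smaller balls. If this fails, the fallback is Maulik--Poonen's own route: use the linear-algebraic structure of $t\mapsto\operatorname{Fil}^1_t$ in a Grassmannian, where the rational subspaces $\mathbb{V}(g_\varphi)$ are pulled back from the fixed $\kew$-structure on $\End(V)$.
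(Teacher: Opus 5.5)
\textbf{The gap is Step 4.} That step carries all the content of the statement, and the mechanism you propose in (iii)--(iv) cannot work.

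The compactness part is sound as far as it goes. If $t_k\to t$ with $g_{\varphi_k}(t_k)=0$ and $\varphi_k\notin Z$, you can normalise the $W$-components and pass to a subsequence. This gives $g_x(t)=0$ for some $x$ in the unit sphere $S(W)$. Hence $\bar J$ lies in the closed set $J'\colonequals\bigcup_{x\in S(W)}\mathbb{V}(g_x)$. But the limit $x=x_t$ depends on $t$, and density of $J$ never forces a single $g_x$ to have many zeros. So Step 3 is never contradicted.

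A toy example shows this. On the closed unit disc take $h_1(t)=t$, $h_2(t)=1$ and $\Gamma=\kew^2$. Then $Z=0$ and each $g_x=x_1t+x_2$ has at most one zero, so everything you use in Steps 2--4 holds with $N=1$. Yet on $\mathbb{Z}_p$-valued points $J=\kew\cap\mathbb{Z}_p$ is dense, so hypothesis (i) really holds there. Nothing in your Step 4 distinguishes $\oh_K$-points from $\mathbb{Z}_p$-points, so it would "derive" a contradiction in this example too. Hence the derivation is invalid.

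On $\oh_K$-valued points, the same $J$ has closure $J'=\mathbb{Z}_p$. This is nowhere dense only because $\mathbb{Z}_p$ has empty interior in $K$, i.e.\ because $[K:\mathbb{Q}_p]=\infty$ (residue field $\fpbar$). Your argument never uses this hypothesis. What is missing is a proof that the closed set $J'$ has empty interior, exploiting that $x$ ranges over a finite-dimensional $\mathbb{Q}_p$-space while $t$ varies $K$-analytically. Your closing fallback names \cite{MP09} as the source but does not supply this ingredient.

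For comparison, the paper does not prove this proposition; it quotes it from \cite{MP09}. It does recover a version for the family $E_{j_1}\times E_{j_2}$ over a curve avoiding supersingular--supersingular points, and it does so by exactly the step you are missing. Lemma \ref{MustTorus} and Proposition \ref{nonCM} identify the accumulation points of the isogeny locus with the points lying on a torus. That is the closed set where $\log q_2/\log q_1\in\mathbb{Z}_p$, the analogue of your $J'$. The open mapping theorem, Lemma \ref{open}, then shows that a nonconstant analytic function cannot carry a ball into $\mathbb{Z}_p$, since $\mathbb{Z}_p$ has empty interior in $\cp$. In the constant case $C$ is locally a torus translate, and Moonen's theorem forces it to be a Shimura curve. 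A general version of your approach needs an analogous open-mapping or dimension argument for the family $\{g_x\}_{x\in S(W)}$ in place of (iv).
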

	
	The specific family which has been considered in previous sections is where $C$ is a plane curve and the fibre above the point $(j_1,j_2)$ on $C$ is the product $E_{j_1}\times E_{j_2}$ of elliptic curves with $j$-invariants $j_1$ and $j_2$ respectively.
	
	The methods here recover the following version of Proposition \ref{MP} for this family:
	
	\begin{prop}
		Let $C$ be an irreducible curve in $X(1)\times X(1)$ defined over \ok \ and avoiding points where both coordinates have supersingular reduction. Suppose $C$ is not a Shimura subvariety. Then the locus of points 
		\[\{(j_1,j_2)\in C(\oh_p) \ | \ j_1 \text{ or } j_2 \text{ is CM, or } (j_1,j_2)\in \bigcup_nY_0(n)(\oh_p) \}\] is $p$-adically nowhere dense.
	\end{prop}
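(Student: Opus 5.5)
Since nowhere density is a local statement, I will argue residue disc by residue disc. Fix an open set $U\subset C(\oh_p)$; shrinking it, I may assume $U$ is a small disc around a point $P\in C(\oh_p)$ lying in a single residue disc. By hypothesis $P$ is not a point where both coordinates have supersingular reduction. The task is then to find, inside $U$, a smaller open disc that avoids the set in question.

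\textbf{Case: both coordinates of $P$ have ordinary reduction.} I pass to Serre-Tate coordinates $(q_1,q_2)$ on the residue disc. In these coordinates:
\begin{itemize}
\item CM points in the first coordinate correspond to $q_1$ being a $p$-power root of unity, and similarly for $q_2$;
\item by Lemma \ref{listoftori}, the modular locus $\bigcup_n Y_0(n)$ is contained in the union of the torus translates $q_2=\zeta q_1^c$ and $q_1=\zeta q_2^c$, with $c\in\zed_p$ and $\zeta$ a $p$-power root of unity.
\end{itemize}
Inside a ball of radius $R<1$ only finitely many roots of unity occur. So the CM part meets $C\cap U$ in the intersection of $C$ with finitely many lines $\{q_1=\zeta\}$ and $\{q_2=\zeta\}$. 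Since $C$ is not Shimura, none of these lines is a component of $C$, and each such intersection is finite.

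For the isogeny part, I use the proof of Proposition \ref{nonCM} and Lemma \ref{MustTorus}. A non-CM point of $C$ lies on at most one torus translate, and that translate must have exponent $c=\log q_2/\log q_1$ (or its inverse). This suggests the key reduction: consider the analytic function
\[
\phi=\frac{\log q_2}{\log q_1}
\]
on the part of $C\cap U$ where $\log q_1\neq0$. A point of the locus that is neither CM nor on $\{q_1=\zeta\}$ lies on a torus exactly when $\phi$ takes a value in $\zed_p$ and the corresponding root of unity matches. That alone does not force nowhere density, because $\phi$ can take values in $\zed_p$ on an open set. I will therefore use that the modular exponents $c_{\bar f}$ form a countable subset of $\zed_p$ (there are countably many isogenies). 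Then $L\cap U$ is a countable union of the sets $C\cap\{q_2=\zeta q_1^{c}\}$, and by Lemma \ref{singletorus} each of these is finite in each ball. So the whole bad set in $U$ is countable.

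\textbf{From countable to nowhere dense.} A countable subset of a disc in $C(\oh_p)$ need not be nowhere dense, so one more step is needed. My plan is a Baire-category argument: $U$ is a complete metric space without isolated points, and each of the countably many finite pieces is closed with empty interior. A countable union of such pieces therefore has empty interior, so its complement is dense. I should be honest that "nowhere dense" strictly asks for the closure to have empty interior, and a countable set can be dense. To rule that out I would show that the closure of the bad set inside $U$ is contained in the set of accumulation points described by Theorem \ref{CasesThm}. Those accumulation points lie on the analytic set $\{\phi\in\zed_p\text{ with matching }\zeta\}$, which might not have empty interior. If that fails, I would fall back on proving the weaker Baire-type density of the good set.

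\textbf{Case: exactly one coordinate of $P$ is supersingular.} Then no isogeny exists between points of the residue disc, since ordinary and supersingular curves are not isogenous. The only bad points are the CM points in the ordinary coordinate, namely finitely many lines $\{q_i=\zeta\}$ in a ball of radius $<1$, together with CM points in the supersingular coordinate. The latter are countable and isolated on $C$ by non-Shimura-ness, so the same argument applies.

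I expect the main obstacle to be upgrading "countable union of finite sets" to genuine nowhere density, that is, controlling the closure via Theorem \ref{CasesThm} and Corollary \ref{powercalculation}.
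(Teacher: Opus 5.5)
\textbf{The gap.} Your argument stops at the step that carries the whole content of the statement, and you flag it yourself. Countability, or your Baire argument, only shows that the bad set has empty interior. Nowhere density requires its \emph{closure} to have empty interior, and you leave that open. Your stated reason for doubt, that $\phi=\log q_2/\log q_1$ ``can take values in $\zed_p$ on an open set'', is false under the hypotheses. Refuting it is the missing idea, and your fallback would only prove a strictly weaker statement.

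Near a point of $C$ whose coordinates are ordinary and non-CM, the points of $L$ lie on torus translates, and by Lemma \ref{MustTorus} so do the accumulation points of $L$. Hence the closure of $L$ lies in the closed set where $\phi$ or $1/\phi$ belongs to $\zed_p$. Suppose this set contained an open disc of $C$. Then the analytic function $\phi$ would map that disc into $\pee^1(\kew_p)$, which has empty interior in $\pee^1(\cp)$. By the $p$-adic open mapping theorem (Lemma \ref{open}), $\phi$ would then be constant, so $q_2=\zeta q_1^c$ on an open set. By the identity principle, $C$ would then contain this torus translate on the whole residue disc. That translate passes through the CM point $(1,\zeta)$, so $C$ is formally linear at a CM point, and Theorem 4.5 of \cite{Moo98II} forces $C$ to be Shimura, a contradiction.

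This is exactly the paper's proof: assume $L$ is dense in a ball, use Proposition \ref{nonCM} to get that every point lies on a torus, and apply Lemma \ref{open} to $\log q_2/\log q_1$. Your handling of CM points in ordinary discs (finitely many lines $q_i=\zeta$ in a ball of radius $<1$) agrees with the paper.

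Separately, in your mixed case the claim that CM points in the supersingular coordinate are isolated on $C$ is false in general. The CM $j$-invariants of conductor prime to $p$, for imaginary quadratic fields in which $p$ is inert, all lie in the compact set $W(\mathbb{F}_{p^2})$. So infinitely many of them accumulate inside a single supersingular residue disc, and a curve on which $j_1$ is non-constant there inherits accumulating CM points. That case therefore needs its own argument. The paper's proof works entirely in Serre--Tate coordinates and does not treat this case separately either.
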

	
	To recover this result from the previous sections, first observe that in any ball of radius less than 1, the points which have at least one CM coordinate lie on a line where one of the corresponding Serre-Tate coordinates is a root of unity. In a ball of radius less than 1, there are only finitely many such roots of unity, hence the number of points on $C$ in any such ball is finite. Thus it suffices to show that the locus $C\cap\bigcup Y_0(n)$ is $p$-adically nowhere dense. 
	
	\begin{prop}
		The locus $L=C\cap \bigcup Y_0(n)$ is $p$-adically nowhere dense in $C$, provided $C$ is not a Shimura subvariety of the product of modular curves.
	\end{prop}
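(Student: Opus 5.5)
Since $p$-adic nowhere density is a local statement, I would fix a point $P\in C(\oh_p)$ and an open neighbourhood $U$ of $P$ in $C$, and produce an open subset of $U$ that misses $L$. Because $C$ avoids doubly supersingular points, after shrinking $U$ I may assume either that both coordinates of $U$ lie in ordinary residue discs, or that exactly one coordinate has supersingular reduction. In the mixed case $L\cap U$ is empty: an ordinary curve is never isogenous to a supersingular one, so the reductions cannot be isogenous. So the real case is where both coordinates are ordinary. There I would pass to Serre-Tate coordinates on a ball $B_R((q_1^0,q_2^0))$ with $R<1$, as in \S\ref{FinishProof}. In these coordinates $L\cap U$ is the union, over modular tori $q_2=\zeta q_1^{c}$ (or with the coordinates swapped), of their intersections with $\mathbb{V}(F)$.

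The key step is to show that $L\cap U$ is a countable set. By Lemma \ref{listoftori}, each $Y_0(n)$ contributes only finitely many torus translates in the ball. By Lemma \ref{singletorus}, each translate meets $C$ in at most $N(F,R)$ points, because $C$ is not a Shimura subvariety and so contains no torus translate. Taking the union over all $n$, $L\cap U$ is therefore countable.

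It remains to see that a countable subset of $U$ cannot be dense in any open subset. The point is that every nonempty open subset of $C(\oh_p)$ near a smooth point is uncountable. Near any smooth point, $C$ is locally the graph of an analytic function over a disc in $\oh_p$ (or $\oh_K$), and such a disc is uncountable. Countability alone does not give nowhere density, though, since a countable set can be dense in an uncountable space. So I need a finer argument.

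For this I would exploit Theorem \ref{CasesThm} together with the local finiteness of points lying on a given torus. Concretely, I would take a point $Q\in U$ that lies on no torus translate at all. Such a point exists: the set of points on $C\cap U$ lying on some torus translate $q_2=\zeta q_1^c$ with $c\in\zed_p$ is the locus where $\log q_2/\log q_1$ (or its inverse) lies in $\zed_p$, up to finitely many roots of unity. This is a closed condition, and if it held on an open subset of $C$, then the analytic function $\log q_2/\log q_1$ would be locally constant on $C$, forcing $C$ to contain a torus. Such a $Q$ is then not an accumulation point of $L$, by Lemma \ref{MustTorus}, and it is not in $L$ either. Hence some small ball around $Q$ misses $L$ entirely, and that ball is the required open subset of $U$.

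The main obstacle is the non-constancy step: justifying that $\log q_2/\log q_1$ cannot take values in $\zed_p$ on a whole open set. To handle it, I would argue that its image is then an open analytic image contained in $\zed_p$. A nonconstant analytic function on a disc over $\cp$ is an open map, while $\zed_p$ has empty interior in $\cp$, so the function must be constant. A constant ratio means $C$ locally coincides with a torus translate, and by analytic continuation and irreducibility $C$ would then be a torus everywhere. Excluding this case then uses the hypothesis that $C$ is not Shimura; if a non-modular torus could still be algebraic, that case would need a separate argument.
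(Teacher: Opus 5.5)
You follow the paper's route, run contrapositively instead of by contradiction. A point $Q$ on no torus translate is not in $L$, and by Lemma \ref{MustTorus} it is not an accumulation point of $L$. If every point of an open piece of $C$ lay on a torus translate, Lemma \ref{open} applied to $\log q_2/\log q_1$ would force $C$ to agree locally with one translate $q_2=\zeta q_1^{c}$. By the identity principle it would then agree on the whole residue disc. Your handling of the mixed ordinary/supersingular discs is also fine.

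\textbf{The gap.} The genuine gap is the last step, which you leave open yourself. Nothing you say rules out an algebraic, non-Shimura $C$ that equals a translate $q_2=\zeta q_1^{c}$ with $c\in\zed_p$ not coming from any modular curve. The hypothesis that $C$ is not Shimura only applies once you know such a curve must be Shimura, and that is the one nontrivial input of the whole proof.

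\textbf{How the paper closes it.} The translate passes through $(1,\zeta)$, whose Serre--Tate coordinates are roots of unity, so both coordinates are CM. Thus $C$ is algebraic and formally linear at a CM point. Moonen's linearity theorem (Theorem 4.5 of \cite{Moo98II}) then shows $C$ is a Shimura subvariety.

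\textbf{An alternative closure.} You could instead use only a result the paper already cites. Once $C$ contains the whole translate over the residue disc, it contains $(\zeta_1,\zeta\zeta_1^{c})$ for every $p$-power root of unity $\zeta_1$. These are infinitely many points with both coordinates CM, so Andr\'e's theorem \cite{And98} forces $C$ to be special. Without one of these inputs the argument is incomplete.

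\textbf{A side issue.} Your countability paragraph invokes Lemma \ref{singletorus} on the grounds that a non-Shimura $C$ contains no torus translate. That is exactly the unproved implication above. The paragraph plays no role in your final argument, and countability of $L$ follows directly from B\'ezout since $C\neq Y_0(n)$. You should drop it.
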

	
	\begin{proof}
		Suppose this locus were dense in some ball $B = B_R((a_1,a_2))\cap C$. This means that every point in $B$ is an accumulation point of $L$. After perhaps shrinking $B$, it is possible to assume that $R<1$ and that all points in $B$ have both coordinates non-CM $j$-invariants, by discreteness. Then by Proposition \ref{nonCM}, every point in $B$ lies on a torus. After changing to Serre-Tate coordinates, this means that there is an analytic map $f\colon B\to \zed_p$ sending $P = (q_1,q_2)$ to $\frac{\log(q_2)}{\log(q_1)}$. By the Open Mapping Theorem (see Lemma \ref{open} below), $f$ must be constant, so this component of $C$ is a torus translate on the whole residue disc by the identity principle.
		
		That is, $C$ is formally linear and algebraic and contains a CM point (it passes through a point at the centre of the disc of the form $(1,\zeta)$ with roots of unity as coordinates). Thus it follows from Theorem 4.5 in \cite{Moo98II} that $C$ is a Shimura subvariety (or modular torus translate, in the language of Definition \ref{tori}).
	\end{proof}
	
	Certainly the Open Mapping Theorem for such functions must be known, but a proof is summarised below:

	\begin{lemma}\label{open}
		Let $U\subset \cp$ be open and  $f\colon U\to \cp$ a non-constant analytic map. Then $f$ is an open map.
	\end{lemma}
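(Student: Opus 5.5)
The plan is to reduce the statement to a local computation with power series and then use Strassman's Theorem (Theorem \ref{Strass}), with the Newton polygon as the organizing picture. Openness is local, so I fix a point $z_0\in U$ and a small open set $V\ni z_0$, and aim to show that $f(V)$ contains a disc around $w_0=f(z_0)$. After shrinking, $f$ is given on a closed disc $\bar{B}_r(z_0)\subset U$ by a convergent expansion $f(z_0+x)=w_0+\sum_{i\ge1}\alpha_i x^i$. Since $f$ is non-constant, the identity principle (zeros of a non-zero convergent power series on a disc are isolated, which itself follows from Strassman) shows that some $\alpha_i\neq0$. Let $k\ge1$ be the least index with $\alpha_k\neq0$.

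Next I rescale so that Strassman applies. Choose $\rho\in|\cp^\times|$ with $0<\rho\le r$, take $\pi\in\cp$ with $|\pi|=\rho$, and set $x=\pi t$ with $|t|\le1$. After multiplying by a suitable scalar $\lambda$, the series $g(t)=\lambda\sum_{i\ge1}\alpha_i\pi^i t^i$ has coefficients in $\oh_p$ that tend to $0$. I then shrink $\rho$ until the coefficient of $t^k$ is the strictly dominant one, that is, $|\alpha_k|\rho^k>|\alpha_i|\rho^i$ for all $i>k$. This is possible because, by convergence, $|\alpha_i|\rho_0^i$ is bounded for some fixed $\rho_0$, so $|\alpha_i|\rho^i\le M(\rho/\rho_0)^i$, and for $\rho$ small this is dominated by $|\alpha_k|\rho^k$ once $i>k$.

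Now take any $w$ with $|w-w_0|<\delta\colonequals|\alpha_k|\rho^k$ and consider the equation $f(z_0+\pi t)=w$, which reads $(w_0-w)+\sum_{i\ge1}\alpha_i\pi^i t^i=0$. The constant term has absolute value less than $\delta$. The coefficient of $t^k$ has absolute value exactly $\delta$. The coefficients $\alpha_i\pi^i$ with $i<k$ vanish, and those with $i>k$ are strictly smaller than $\delta$. So the maximal index achieving the maximum absolute value is $N=k\ge1$, and Strassman's Theorem gives $k\ge1$ roots $t\in\oh_p$, counted with multiplicity. Since $\cp$ is algebraically closed, these roots lie in $\cp$. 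Thus every $w\in B_\delta(w_0)$ equals $f(z)$ for some $z\in\bar{B}_\rho(z_0)$. Because $\rho$ can be taken arbitrarily small, $f$ maps every neighbourhood of $z_0$ onto a set containing an open disc around $f(z_0)$, which proves that $f$ is open.

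The main obstacle is the first step. I need to be precise about what "analytic" means on an arbitrary open $U$: locally given by a convergent power series. I also need to justify that non-constancy on $U$ forces the local expansion at every point to be non-constant. If $U$ is disconnected (and $\cp$ is totally disconnected), a function can be non-constant on $U$ yet locally constant near some points, and the lemma as stated would then be false. I would therefore read the hypothesis as "non-constant on every disc in $U$", i.e. nowhere locally constant. This is exactly what is needed in the application: there $f=\log q_2/\log q_1$ is analytic on a ball, and the conclusion drawn is that it is constant.
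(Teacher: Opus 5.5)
Your proof is correct and is essentially the paper's argument: recentre at $z_0$, rescale to the closed unit disc, and apply Strassman's Theorem to $f-w$, whose expansion differs from that of $f-w_0$ only by a constant term smaller than the dominant coefficient. Two small differences: the paper takes whichever index $N\ge 1$ maximises $|a_n|$ on the rescaled disc (its appeal to Rouch\'e amounts to this coefficient comparison), so your extra shrinking to make the lowest-order term dominant is harmless but not needed; and your caveat about the hypothesis is accurate, since the paper tacitly relies on the same nowhere-locally-constant reading when it asserts that $f(x)-y_0$ is a non-zero series at $x_0$.
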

	
	\begin{proof}
		The argument is slightly different to the usual argument over $\see$, which uses local compactness. However it still hinges on a $p$-adic version of Rouch\'e's Theorem.
		
		Let $y_0\in f(U)$ and choose a preimage $x_0\in U$. The function $f(x)-y_0$ is non-zero and analytic, so changing coordinates to centre at $x_0$ and rescaling for convergence on $|t|\geq 1$ (that is to say, choose $a>0$ such that, if $p^at =  x-x_0$, then  $|\frac{x-x_0}{p^{a}}|\leq1$ implies $x\in U$), can expand
		\[f(x)-y_0 = a_1t + a_2t^2 + ....\]
		Let $r = \max_n|a_n|$ (this is $||f||_1$ in the notation of Proposition \ref{epsdel}), and $N$ be the position in which this maximum is achieved (convergence on $|t|\leq1$ implies $|a_n|\to0$.) Now for any $y\in B_r(y_0)$, apply $p$-adic Rouch\'e's Theorem (Theorem 10.10 in \cite{Con}) to the analytic functions $g(x) =f(x)-y_0$ and $h(x)=f(x)-y$. $||g-h||_1 = |y-y_0|<r$ so if $h(t) = b_0+b_1t+b_2 t^2+...$, the maximum of the numbers $\{|b_n|\}$ is also achieved at $n=N\geq 1$. Thus by Strassman's Theorem, $h$ has a root with $|t|\leq 1$, which exactly means $y\in f(U)$.		
	\end{proof}

	\section{The Supersingular Locus} \label{SecSsing}

	\subsection{Background on the Supersingular Period Map} \label{Period}
	
	Let $E/\cp$ be an elliptic curve with supersingular reduction and consider the vector bundle $\mathcal{H}^1_\text{dR}$ on the deformation disc (open ball of radius 1) containing $E$. Let $\{v_1,v_2\}$ be a basis of flat sections for this bundle locally, with Fil$^1$ spanned by $v_1$ at $E$. Solving the Gauss-Manin connection on a neighbourhood of $E$ on the modular curve gives a way to identify fibres of the vector bundle $\mathcal{H}^1_\text{dR}$ and track how the filtration at other points in the disc looks in terms of a flat basis. The period map is given by sending an elliptic curve $E'$ within the region on which the connection can be solved to the line Fil$^1(E')$ in the basis $\{v_1,v_2\}$. 
		
	Observe now that this can be done on the whole ball: indeed to solve the connection is to solve the following differential equation. Suppose 
	\[\nabla v_1 = av_1+cv_2, \qquad \nabla v_2 = bv_1+dv_2\]
	where $a,b,c,d\in \Omega^1_{Y_0(1)/\oh_p}$. Then $f_1v_1+f_2v_2$ is flat if and only if
	\[d(f_1) = -af_1-bf_2, \qquad d(f_2) = -cf_1-df_2\]
	
	Now solving in formal power series centred at $E$, suppose $f_1 = \sum_n k_nz^n$ and $f_2 = \sum_nl_nz^n$, with $k_0 = 1$ and $l_0 = 0$. Then these equations become 
	\[k_{n+1} = \frac{-1}{n+1} (ak_n+bl_n), \qquad l_{n+1} = \frac{-1}{n+1} (ck_n+dl_n)\]
	
	That is (since $a,b,c,d$ take integral values) the denominators of $k_{n+1}$ and $l_{n+1}$ can be as bad as $(n+1)!$, so there is convergence for $|z|< p^\frac{1}{p-1}$. However, there is more structure since $\mathcal{H}^1_\text{dR}$ is isomorphic to the vector bundle $H^1_\text{crys}$, which has the structure of an $F$-isocrystal. Explicitly this means that, considering the contraction map given on rings by
	\[f\colon W[[x]]\to W[[x]], \qquad x\mapsto x^p, \qquad a\mapsto \sigma(a) \text{ for } a\in W\]	
	there is an isomorphism of vector bundles with Frobenius and connection between $f^*H^1_\text{crys}[1/p]$ and $H^1_\text{crys}[1/p]$. Therefore any supersingular ball of radius $R<1$ on $Y(1)$ can be filled out, covered with flat coordinates - and thus given a period map to $\mathbb{P}^1$ - by sufficiently many pullbacks of the original domain $B_{1/p^{1/p-1}}$, on which the differential equation above was directly soluble. Since $p$ was inverted in the isomorphism between $f^*H^1_\text{crys}[1/p]$ and $H^1_\text{crys}[1/p]$, this tiling process may introduce more denominators, but for any ball of radius $R<1$ this will only be a bounded change.
	
	\subsection{Properties of Hecke Orbits} \label{Hecke}
	
		The upshot of the previous section is that, when studying a 2-dimensional supersingular ball of radius $R<1$ in $Y(1)\times Y(1)$ in the context of (\ref{ssingsubseq}), the period map will always be available, a map of rigid analytic spaces from the closure of the ball to $\mathbb{P}^1\times \mathbb{P}^1$, coordinatewise given by the above. Implicitly fixing a centre $(a_1,a_2)$, call this map $\pi_{R}\colon\bar{B}_R((a_1,a_2))\to \mathbb{P}^1$. 
		
		The following results about Hecke orbits, which will be used to compute the limit (\ref{ssingsubseq}), come from work of Michael Rapoport and Thomas Zink on moduli of $p$-divisible groups.

	\begin{thm}\label{ppower}
		Fix $R<1$, a centre $(a_1,a_2)\in Y(1) \times Y(1)$ with supersingular coordinates and a positive integer $n_0$ coprime to $p$. Then the image $\pi_R(Y_0(n_0p^m))$ is independent of $m$. 
	\end{thm}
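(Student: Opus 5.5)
The plan is to work on the Rapoport--Zink side, where the period map is equivariant for quasi-isogenies and collapses $p$-power isogenies. On a supersingular residue disc, a point of $Y(1)$ is a lift $E$ of a fixed supersingular $\bar{E}$. Equivalently, it is a deformation of the formal group $\bar{E}[p^\infty]$, a height-$2$ dimension-$1$ $p$-divisible group $\mathbb{X}$. The Gross--Hopkins period map $\pi$ sends $(X,\rho)$ to the Hodge filtration $\mathrm{Fil}^1\subset M(\mathbb{X})\otimes\cp$, where $M(\mathbb{X})$ is the rational Dieudonn\'e module and $\rho$ is the quasi-isogeny to $\mathbb{X}$ modulo $p$. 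In the language of \S\ref{Period}, this is exactly the map to $\pee^1$ of lines in the flat basis $\{v_1,v_2\}$, since the crystal identifies the flat sections with $M(\mathbb{X})[1/p]$.

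The key structural fact is that $\pi$ is constant on fibres of the map forgetting the height of $\rho$. Concretely, if $(X,\rho)$ and $(X',\rho')$ satisfy $\rho'^{-1}\circ\rho = $ reduction of a quasi-isogeny $X\to X'$ lifting to characteristic $0$, then their Hodge filtrations coincide inside $M(\mathbb{X})\otimes\cp$. This holds because a quasi-isogeny induces an isomorphism of rational Dieudonn\'e modules respecting filtrations. This is the statement, attributed to Rapoport--Zink, that $\pi$ factors through the quotient by $p$-isogenies.

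With this in hand, the steps are as follows. First, I describe $Y_0(n_0p^m)\cap \bar{B}_R((a_1,a_2))$ as the set of pairs $(E_1,E_2)$ together with a cyclic isogeny $f\colon E_1\to E_2$ of degree $n_0p^m$. Second, I factor $f=f_p\circ f_{n_0}$, where $f_{n_0}$ has degree $n_0$ and $f_p$ has degree $p^m$; this is possible for cyclic isogenies since degrees are coprime. Third, I compute coordinatewise. The first coordinate of $\pi_R$ sees only $E_1$, and the second sees $E_2$. I use $f_p$ to identify $M(E_2[p^\infty])$ with $M(E_1'[p^\infty])$ rationally, where $E_1'=E_1/\ker f_{n_0}$. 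The second coordinate of $\pi_R$ is then the filtration of $E_1'$ transported by the fixed identification of the reductions. This shows that the image point $\pi_R(E_1,E_2)$ depends only on $(E_1,E_1')$ and on the reduction $\bar{f}$ modulo $p$, viewed as an element of the quasi-isogeny group. That group is $\mathrm{End}(\bar{E})\otimes\kew$ for the quaternion algebra, acting on $\pee^1$ via $M(\mathbb{X})$.

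It then suffices to show that the set of possible reductions $\bar{f}$ is the same for every $m$, up to the collapsing action. The reductions $\bar{f}$ of degree $n_0p^m$ between $\bar{E}_1$ and $\bar{E}_2$ are of the form $\bar{g}\circ F^m$-type compositions, where $F$ is Frobenius. On a supersingular curve, the $p^m$-part of a cyclic isogeny reduces to a power of Frobenius composed with an automorphism of $\mathbb{X}$. Frobenius acts on the period domain compatibly with the contraction $f$ of \S\ref{Period}, so by the $F$-isocrystal isomorphism its effect on the image in $\pee^1$ is trivial after the identification. Hence the images for different $m$ are the same subset, namely the $\mathrm{Hom}$-orbit of degree $n_0$ times the (projectively trivial) Frobenius.

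The main obstacle is the third step: making precise that the $p$-power part acts trivially on the image in $\pee^1$, and that every degree-$n_0p^{m}$ configuration in the ball arises from one of degree $n_0p^{m'}$ for any other $m'$, not merely that the images are contained in one another. The natural first attempt is to use that $p$ itself acts as a scalar on $M(\mathbb{X})$, hence trivially on $\pee^1$. I would also use surjectivity of $\pi$ on Rapoport--Zink space fibres to realise each filtration point by lifts with isogenies of the required $p$-power degree, checking that the lifted points still lie in $\bar{B}_R$.
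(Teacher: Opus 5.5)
\textbf{There is a genuine gap: Step 4 does not go through, and the equality you aim for cannot hold for $\pi_R$ on $\bar B_R$.}

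Your Steps 1--3 are sound, and they are essentially what the paper imports. The paper's proof is only a citation of \cite{RZ96}, Proposition 5.37, which says when two points have the same period. Your key fact, that a quasi-isogeny lifting $\rho'^{-1}\circ\rho$ preserves the Hodge filtrations on the rational Dieudonn\'e module, is the easy direction of that criterion. It gives $\pi(E_2)=\bar f_*\pi(E_1)$.

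\textbf{Step 4.} You claim the $p^m$-part of $\bar f$ is projectively trivial. The only elements of $D^\times$ acting trivially on $\pee^1$ are those in the centre $\mathbb{Q}_p^\times$, and their reduced norms have even valuation. A degree-$p$ isogeny of supersingular curves acts on $M(\mathbb{X})[1/p]$ through an element of odd valuation, so it moves period points: a non-scalar endomorphism of a $2$-dimensional space fixes at most two lines. The $F$-isocrystal isomorphism of \S\ref{Period} is the semilinear Frobenius structure used to continue flat sections across the disc. It does not make a Frobenius isogeny act trivially on periods. What is true is that an isogeny between supersingular curves over $\fpbar$ whose degree is divisible by $p^2$ factors through $[p]$, because $F^2$ equals $[p]$ up to an isomorphism. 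Hence $\bar f=p^{\lfloor m/2\rfloor}\bar g$ with $\deg\bar g\in\{n_0,n_0p\}$, and only the scalar $p$ drops out. So you can pass from $m$ to $m+2$ but not from $m$ to $m+1$. For example, take $n_0=1$ and $\bar E_1=\bar E_2$ supersingular, defined over $\mathbb{F}_p$, with automorphism group $\{\pm1\}$. Periods of $Y_0(1)$ then lie on the diagonal. Periods of $Y_0(p)$ lie on graphs of degree-$p$ endomorphisms, each of which meets the diagonal in at most two points.

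\textbf{The reverse inclusion in $\bar B_R$.} You plan to check that the lifted points still lie in $\bar B_R$. This is not merely an obstacle; it is false. For fixed $R<1$, cyclic $p^m$-isogenies between points of $\bar B_R$ exist for only boundedly many $m$. That is Corollary~\ref{boundedintersections}, and it also follows from the canonical-subgroup behaviour of the Hasse valuation along non-backtracking chains of $p$-isogenies. So if $\pi_R(Y_0(n_0p^m)\cap\bar B_R)$ were independent of $m$, it would be empty for every $m$. That contradicts $(a,a)\in Y_0(1)\cap\bar B_R((a,a))$. For the period map on the full residue discs, the statement holds only within a parity class of $m$, and there the reverse inclusion needs surjectivity of the Gross--Hopkins period map.

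\textbf{What to prove instead.} Corollary~\ref{boundedintersections} only needs a containment: $\pi(Y_0(n_0p^m)\cap\bar B_R)\subseteq\pi(Y_0(n_0))\cup\pi(Y_0(n_0p))$, with $\pi$ the full-disc period map. Your Steps 1--3, the factorisation through $[p]$, and lifting the prime-to-$p$ part of $\bar g$ give exactly this. It is the form in which the paper's appeal to \cite{RZ96} actually does its work, so aim for it rather than equality.
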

	
	\begin{proof}
		This follows from a stronger statement about when two points have the same period image in \cite{RZ96}, Proposition 5.37. 
	\end{proof}

	\begin{cor} \label{boundedintersections}
		Fix $R<1$, a centre $(a_1,a_2)\in Y(1) \times Y(1)$ with supersingular coordinates and a positive integer $n_0$ coprime to $p$. There is $M\in \enn$ such that \[m\geq M \quad \Longrightarrow\quad Y_0(n_0p^m)\cap \bar{B}_R((a_1,a_2))=\emptyset.\]
	\end{cor}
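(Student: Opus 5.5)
The plan is to combine Theorem \ref{ppower} with the fact that the $p$-power Hecke correspondence moves points far away. Two facts are needed. First, $Y_0(n_0p^m)$ contains the composite correspondence $Y_0(n_0)\circ Y_0(p^m)$. Second, points of $Y_0(n_0p^m)$ lying in $\bar B_R((a_1,a_2))$ are pairs $(E_1,E_2)$ related by a cyclic isogeny of degree $n_0p^m$. Factor this isogeny as a prime-to-$p$ part of degree $n_0$ followed by a cyclic $p^m$-isogeny $E_1'\to E_2$. Prime-to-$p$ isogenies are étale on $p$-divisible groups, so they preserve the ``distance to the boundary'' of the supersingular disc, i.e.\ the Hodge height/valuation measured through the Hasse invariant. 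The key quantity is therefore how a cyclic $p^m$-isogeny changes this height.

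\textbf{Step 1.} Record the standard fact (Katz/Lubin, or via the canonical-subgroup theory of the Rapoport--Zink tower) about a supersingular point $E$ with $|j(E)-j_0|\le R<1$. Its Hasse invariant has valuation $h(E)\ge \epsilon(R)>0$. If $E\to E'$ is a cyclic $p$-isogeny, then either $h(E')=h(E)/p$, or the kernel is the canonical subgroup and $h(E')=p\,h(E)$; the latter case can occur only while $h<p/(p+1)$, and $h$ is truncated at $1$. A cyclic $p^m$-isogeny is a chain of $p$-isogenies with no backtracking. Following the height along the chain, one sees that it is unimodal: it can go down (quotient by non-canonical subgroups, $h\mapsto h/p$) and then, once the canonical direction is forced, only go up. Hence one of the two endpoints has height at most $p^{-\lfloor m/2\rfloor}$, or more precisely $h(E_1)\,h(E_2)\le C p^{-m}$ up to bounded factors. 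Since $\bar B_R$ forces $h(E_i)\ge \epsilon(R)$ for both coordinates, this yields a contradiction once $m\ge M$ with $p^{-m/2}<\epsilon(R)$ adjusted for the constant. The $n_0$ part is harmless because prime-to-$p$ isogenies preserve $h$.

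\textbf{Step 2 (alternative via Theorem \ref{ppower}).} If the height argument is not to be assumed, use the period map instead. The image $\pi_R(Y_0(n_0p^m)\cap \bar B_R)$ is independent of $m$. Each fibre of $\pi_R$ on the affinoid $\bar B_R$ is finite, since $\pi_R$ is a nonconstant rigid analytic map on a closed ball, and the number of points in a fibre is uniformly bounded by Strassman's Theorem (Theorem \ref{Strass}) applied to the power series coordinates. On the other hand, the number of points of $Y_0(n_0p^m)$ over a fixed $(E_1,\cdot)$ grows like $p^m$. If infinitely many $m$ met $\bar B_R$, then, using that points on $Y_0(n_0p^{m})$ relate by $p$-power Hecke operators to those on $Y_0(n_0p^{m'})$ with the same period image, one would get an infinite $p$-power Hecke orbit inside the compact set $\bar B_R$ with a common period image. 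This contradicts the finite uniform fibre bound.

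The main obstacle is making the quantitative statement precise: that a long cyclic $p$-power isogeny must pass through, and end near, the boundary of the supersingular disc, so that both endpoints cannot lie in a ball of radius $R<1$. I would try Step 1 first, using the canonical subgroup dichotomy, since it gives an explicit $M$ of size roughly $2\log_p(1/\epsilon(R))$. I would fall back to the fibre-finiteness argument of Step 2 only if the non-backtracking bookkeeping becomes messy.
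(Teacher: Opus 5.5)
Your Step 1 is a genuinely different route from the paper's and can be made to work. As written, however, its key step is stated backwards, and the conclusion does not follow from what you state. Along a non-backtracking chain $F_0\to F_1\to\cdots\to F_m$ of $p$-isogenies, the height does not go down and then up; it goes up and then down.

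The mechanism is this. If $h(F_i)<p/(p+1)$ and $C\subset F_i[p]$ is not the canonical subgroup, then $h(F_i/C)=h(F_i)/p$. Moreover, the canonical subgroup of $F_i/C$ is $F_i[p]/C$, which is exactly the kernel of the dual isogeny $F_i/C\to F_i$. Non-backtracking forbids quotienting by it next. So after one non-canonical step, every later step is non-canonical and divides $h$ by $p$.

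Near the top your formulas also need adjusting:
\begin{itemize}
\item a canonical step from $1/(p+1)<h<p/(p+1)$ gives $1-h$, not $ph$;
\item every step from $h\ge p/(p+1)$ lands at $h=1/(p+1)$.
\end{itemize}
In both cases the backtrack is the canonical direction at the target, so again only non-canonical steps can follow. Hence the heights rise ($h\mapsto ph$ while $h<1/(p+1)$), pass a peak of height at most $1$ within a bounded number of steps, and then fall ($h\mapsto h/p$) to the end. This mountain shape is what gives $h(F_0)h(F_m)\le Cp^{-m}$ with $C$ depending only on $p$. With the valley shape you describe, the endpoints would be the maxima of $h$, and nothing would force either of them to be small.

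With that correction your argument is sound and compares favourably with the paper's. The paper uses Theorem \ref{ppower} to place $\bigcup_m Y_0(n_0p^m)\cap\bar B_R((a_1,a_2))$ inside $\pi_R^{-1}(\pi_R(Y_0(n_0)))$. It then uses finiteness of irreducible components of this closed analytic subset of an affinoid: distinct irreducible curves $Y_0(n_0p^m)$ cannot share a component, so only finitely many of them meet the ball. That gives no explicit $M$, and a priori $M$ depends on $n_0$, which is why Lemma \ref{primetop} is needed afterwards. Your route needs only Katz--Lubin canonical subgroup theory rather than Rapoport--Zink, and gives $M\approx 2\log_p(1/\epsilon(R))+O(1)$. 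Because prime-to-$p$ isogenies are isomorphisms on $p$-divisible groups and so preserve $h$, your $M$ is independent of $n_0$; this already contains Lemma \ref{primetop}.

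Your fallback Step 2, however, has a real gap. Theorem \ref{ppower} only places the period images of the points of $Y_0(n_0p^m)\cap\bar B_R$, for the various $m$, in the one-dimensional set $\pi_R(Y_0(n_0))$. Points for different $m$ need not have a common period image. The Hecke translates of a given point that do share its image generally leave $\bar B_R$, which is exactly what Step 1 shows. So you do not get infinitely many points of the ball in a single fibre, and a uniform fibre bound (a zero-dimensional finiteness statement) yields no contradiction. The growth like $p^m$ of the number of points of $Y_0(n_0p^m)$ over a fixed first coordinate also plays no role. The finiteness that does the job is that of irreducible components of the one-dimensional preimage, as in the paper.
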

	
	\begin{proof}
		By Theorem \ref{ppower}, $\bigcup_m Y_0(n_0p^m)\cap \bar{B}_R((a_1,a_2)) \subset \pi_R^{-1}(\pi_R(Y_0(n_0)))$. $\pi_R(Y_0(n_0))$ is a closed rigid analytic subvariety of the target, thus its preimage can contain only finitely many components. That is, for only finitely many $m$ does $Y_0(n_0p^m)$ intersect $\bar{B}_R((a_1,a_2))$.
	\end{proof}

	\subsection{Applications to Equidistribution}

	Let $\gamma_1,..,\gamma_s$ be a list of the supersingular elliptic curves in characteristic $p$ and $\Gamma_1,...,\Gamma_s$ be lifts to unramified extensions of $\kew_p$ (which exist by Deuring's Lifting Theorem). For each $(i,j) \in \{1,...,s\}^2$, and each $N\in \enn$, write $R_N =\frac{1}{p^{1/N}}$ and  consider the ball $B_{R_N}((\Gamma_i,\Gamma_j))$. These exhaust all balls appearing in (\ref{ssingsubseq}) - every ball for which the limit there needs to be computed will be contained in one of these.
	
	Fix $N$ and write $R_N = \frac{1}{p^{1/N}}$. Write $L_N = \bigcup_{i,j} \bar{B}_{R_N}((\Gamma_i,\Gamma_j))$. Then by Corollary \ref{boundedintersections} with $n_0=1$, for each $i,j$, there exists $M_{ij}$ such that if $m\geq M_{ij}$, $Y_0(p^m)\cap \bar{B}_{R_N}((\Gamma_i,\Gamma_j))=\emptyset$. Thus if $M \colonequals \max_{i,j}M_{ij}$, for $m\geq M$, the intersection $Y_0(p^m)\cap L_N =\emptyset$.

	\begin{lemma} \label{primetop}
		Fix $N$ and define $M$ as above. Let $n_0$ be prime to $p$. Then if $m\geq M$, $Y_0(n_0p^m)\cap L_N =\emptyset$.
	\end{lemma}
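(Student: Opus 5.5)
We argue by contradiction, using the factorisation of a cyclic $n_0p^m$-isogeny into a prime-to-$p$ part and a $p$-power part. Suppose $(E_1,E_2)\in Y_0(n_0p^m)\cap L_N$ with $m\geq M$, so $(E_1,E_2)\in \bar{B}_{R_N}((\Gamma_i,\Gamma_j))$ for some $i,j$. Let $f\colon E_1\to E_2$ be a cyclic isogeny of degree $n_0p^m$. Its kernel is cyclic of order $n_0p^m$ with $(n_0,p)=1$, so it splits as $\ker f[n_0]\times \ker f[p^m]$. Hence $f$ factors as
\[E_1\xrightarrow{g} E_1/\ker f[n_0]\eqqcolon E_1'\xrightarrow{h} E_2,\]
where $g$ is cyclic of degree $n_0$ and $h$ is cyclic of degree $p^m$. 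We can equally factor in the other order: first the $p^m$-part, then the $n_0$-part.

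The key input is that prime-to-$p$ isogenies preserve $p$-adic distance between supersingular points. This is the fact alluded to in the outline, that prime-to-$p$ Hecke orbits preserve distance. Concretely, a cyclic isogeny of degree prime to $p$ from $E_1$ deforms uniquely to every deformation of $E_1$, because the finite étale kernel lifts uniquely. This gives a rigid-analytic isomorphism from the residue disc of $\bar{E}_1$ onto the residue disc of $\bar{E}_1/\ker\bar{g}$, compatible with the Serre–Tate/Lubin–Tate deformation structure. Since the Lubin–Tate deformation space depends only on the $p$-divisible group, and $g$ induces an isomorphism of $p$-divisible groups, this map is an isometry for the natural coordinate. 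The reduction $\bar{E}_1'$ is a supersingular curve, say $\gamma_k$.

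Take the distinguished lift $\Gamma_i$ in the disc of $E_1$. Transport it by the lifted $n_0$-isogeny to a point $\Gamma_i'$ in the disc of $\gamma_k$. Then $E_1'$ lies within $R_N$ of $\Gamma_i'$. To land in the ball around $\Gamma_k$ itself, we need the balls of radius $R_N$ to be independent of the chosen lift in a residue disc. We argue this as follows. The collection of balls $B_{R_N}$ around all lifts covers the disc, and Corollary \ref{boundedintersections}, applied with $n_0=1$, can be run for every centre in a finite covering. After enlarging $M$ by taking a maximum over a finite cover of each closed annulus $|x-\Gamma|\le R_N$, we may assume $M$ works for balls $\bar{B}_{R_N}$ around any point of the disc.

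Now consider the point $(E_1',E_2)$. It lies in $\bar{B}_{R_N}$ around $(\Gamma_i',\Gamma_j)$, and it lies on $Y_0(p^m)$ via $h$. This contradicts the choice of $M$, since $m\geq M$.

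The main obstacle is the distance-preservation step. We must check that lifting a prime-to-$p$ isogeny gives an isometry of supersingular residue discs in the $j$-metric, and not only in the deformation parameter; the comparison between the $j$-coordinate and the Lubin–Tate parameter may distort radii. We will try first to show that the Hecke correspondence $T_{n_0}$ restricted to a supersingular disc has integral power-series inverse and so is an isometry. A fallback is to work with the period map: $\pi_R$ is equivariant for prime-to-$p$ Hecke operators, so by Theorem \ref{ppower} the $m$-bound for $n_0=1$ transfers directly.
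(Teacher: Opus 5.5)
\textbf{Gap: locating $E_1'$ in a ball controlled by $M$.} Your overall strategy is the paper's. You split the cyclic $n_0p^m$-isogeny into its prime-to-$p$ and $p^m$ parts, transport along the prime-to-$p$ part, and contradict the $n_0=1$ case. The paper factors in the other order and transports on the $E_2$ side, which is immaterial. The gap is in the step placing $E_1'$ inside a ball controlled by $M$.

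After transporting you only know $E_1'\in\bar{B}_{R_N}(\Gamma_i')$. Since $n_0$ is arbitrary, $\Gamma_i'$ ranges over the infinite prime-to-$p$ Hecke orbit of $\Gamma_i$, and nothing in your argument bounds $|\Gamma_i'-\Gamma_k|$ away from $1$.

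Your remedy, enlarging $M$ until it works for $\bar{B}_{R_N}$ about every point of the residue disc, cannot succeed:
\begin{enumerate}
\item Covering $\bar{B}_{R_N}(\Gamma_k)$ gains nothing, since every closed ball of radius $R_N$ centred in it equals it.
\item The open residue disc is not a finite union of closed balls of radius $R_N$.
\item In fact no uniform $M$ exists. For cyclic $C\le\Gamma_j[p^m]$, the point $(\Gamma_j/C,\Gamma_j)$ lies on $Y_0(p^m)$ via the dual isogeny. The reduction of $\Gamma_j\to\Gamma_j/C$ is purely inseparable, so $\Gamma_j/C$ reduces to $\gamma_j^{(p^m)}$, which is $\gamma_j$ for even $m$. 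Hence balls $\bar{B}_{R_N}((x,\Gamma_j))$ with $x$ in the residue disc of $\gamma_j$ meet $Y_0(p^m)$ for arbitrarily large $m$.
\end{enumerate}
Enlarging $M$ would in any case prove less than the lemma, which asserts that the $M$ already defined works. Your period-map fallback is too vague to assess and meets the same obstruction, since Corollary \ref{boundedintersections} only concerns a fixed closed ball.

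\textbf{The missing ingredient is arithmetic.} The fact you wanted, that $\bar{B}_{R_N}$ does not depend on the chosen lift, is true for lifts defined over $W(\fpbar)$. This is exactly what the paper uses.
\begin{enumerate}
\item The centres $\Gamma_1,\dots,\Gamma_s$ are unramified lifts.
\item Since $\Gamma_i$ has good reduction, its $n_0$-torsion is unramified. So the quotient $\Gamma_i'=\Gamma_i/\tilde{K}$ by the lift $\tilde{K}$ of the prime-to-$p$ kernel is again defined over an unramified base, and $j(\Gamma_i')\in W(\fpbar)$.
\item As $j(\Gamma_k)\in W(\fpbar)$ has the same reduction, $|j(\Gamma_i')-j(\Gamma_k)|\le 1/p\le R_N$.
\item By the ultrametric inequality, $\bar{B}_{R_N}(\Gamma_i')=\bar{B}_{R_N}(\Gamma_k)$. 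Then $(E_1',E_2)\in Y_0(p^m)\cap\bar{B}_{R_N}((\Gamma_k,\Gamma_j))$ contradicts the original $M$ with no enlargement.
\end{enumerate}
This is why the radii are $R_N=p^{-1/N}\ge p^{-1}$ and the centres are unramified lifts.

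For the distance-preservation step you flagged, the paper avoids any global isometry or integrality statement about $T_{n_0}$. It simply observes that if $E\cong\Gamma$ over $\oh_p/p^{1/N}\oh_p$, then the quotients by corresponding lifts of an \'etale subgroup remain isomorphic over $\oh_p/p^{1/N}\oh_p$.
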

	
	\begin{proof}
		Suppose $Y_0(n_0p^m)\cap L_N \neq\emptyset$, and pick $(E_1,E_2)$ in the intersection. From the definition of $L_N$, let $i$ and $j$ be such that $E_1 \equiv \Gamma_i$ and $E_2 \equiv \Gamma_j$ modulo $p^{1/N}$. 
		
		Factor the $n_0p^m$ isogeny from $E_1$ to $E_2$ as $E_1 \to E'\to E_2$, where the first part has degree $p^m$ and the second has degree $n_0$. Then $E'$ and $E_2$ are prime-to-$p$ isogenous. Let $E_2\to E'$ be an $n_0$-isogeny (for instance, the dual of the one above) with kernel $C\leq E_2[n_0]$ a cyclic subgroup of order $n_0$. Let $\tilde{C}\leq \Gamma_j[n_0]$ be the subgroup lifting the image of $C$ under the isomorphism $E_2\times \oh_p/p^{1/N}\oh_p \to \Gamma_j \times \oh_p/p^{1/N}\oh_p$ (this image can be lifted back up to $\Gamma_j$ since prime-to-$p$-torsion group schemes are \'etale). Then $\Gamma_j/\tilde{C}$ is defined over an unramified extension of the field of definition of $\Gamma_j$ (from \cite{Sil86}, Proposition VII.4.1 and Remark III.4.13.2), so in particular, it is defined over $W[1/p]$. 
		
		That is, $E' = E_2/C$ and $\Gamma_j/\tilde{C}$ are isomorphic modulo $p^{1/N}$. Letting $k$ be such that $\Gamma_j/\tilde{C}\cong \Gamma_k \pmod{\mathfrak{m}_p}$, it must in fact be the case that this isomorphism holds modulo $p$ since both are defined over an unramified base. Thus recover that $E'$ and $\Gamma_k$ are isomorphic modulo $p^{1/N}$ and so $(E_1,E')\in Y_0(p^m)\cap \bar{B}_{R_N}((\Gamma_i,\Gamma_k))\subset Y_0(p^m)\cap L_N$. Hence $m< M$ as required.
	\end{proof}	
	This is the final ingredient required to prove Theorem \ref{Main} (ii)

	\begin{thm}\label{ThmSsing}
		The limit in (\ref{ssingsubseq}) is 0.
	\end{thm}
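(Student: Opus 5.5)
The plan is to show the stronger statement outlined in the introduction: for any fixed supersingular ball $B$ of radius $R<1$, the numerator $|C\cap Y_0(a_n)\cap B|$ in (\ref{ssingsubseq}) is eventually $0$ once $v_p(a_n)$ is large enough.

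First reduce to the balls $\bar{B}_{R_N}((\Gamma_i,\Gamma_j))$. Given a ball $B=B_r((a_1,a_2))$ with $r<1$ whose centre has both coordinates supersingular, choose $N$ with $r\le R_N=p^{-1/N}$. Then $B$ lies in the residue disc of some $(\gamma_i,\gamma_j)$. Since every point of a residue disc is a centre of any ball of radius $R_N$ it contains, and $\Gamma_i,\Gamma_j$ lie in the corresponding residue discs, there is a slight subtlety. We only know $B\subset B_1((\Gamma_i,\Gamma_j))$, not $B\subset \bar{B}_{R_N}((\Gamma_i,\Gamma_j))$. To fix this, I would either enlarge $N$, or re-run Corollary \ref{boundedintersections} with the centre $(a_1,a_2)$ itself. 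The latter is allowed, since that corollary holds for an arbitrary supersingular centre. Then Lemma \ref{primetop}'s argument applies verbatim, provided each $\Gamma_k$-ball used there is replaced by the appropriate ball. The cleanest route is to choose $N$ with $R_N\ge \max(r,|a_1-\Gamma_i|,|a_2-\Gamma_j|)$. This is possible because those distances are $<1$, and it gives $B\subset L_N$.

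Next apply Lemma \ref{primetop}. With $M=M(N)$ as defined before that lemma, any $n$ with $v_p(n)\ge M$ can be written $n=n_0p^m$ with $(n_0,p)=1$ and $m\ge M$. Then $Y_0(n)\cap L_N=\emptyset$, hence $C\cap Y_0(a_n)\cap B=\emptyset$. Since $v_p(a_n)\to\infty$, this holds for all $n$ beyond some $n_1$, so the ratio in (\ref{ssingsubseq}) is eventually $0$.

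It remains to treat balls with mixed centres, where one coordinate is ordinary and the other supersingular. I expect these to be the main obstacle, since neither Serre–Tate coordinates nor the period map cover both factors simultaneously. For these I would observe that an isogeny preserves ordinarity of reduction, so $Y_0(n)$ never meets a residue disc of mixed type and the numerator is identically $0$. The fully ordinary balls are handled by Theorem \ref{ThmOrd}, whose bound $N(F,R)C_1u_R\sqrt{a_n}/a_n$ tends to $0$ along any sequence $a_n\to\infty$. The balls at infinity are deferred to \S\ref{infinity}. Combining these cases with Lemma \ref{padiccondition} and Bézout's count $\deg(C)\deg Y_0(a_n)\ge a_n$ gives the limit.
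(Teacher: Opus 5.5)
Your proposal is correct and follows essentially the same route as the paper: you place $B$ inside some $L_N$ and invoke Lemma \ref{primetop} to get $Y_0(a_n)\cap B=\emptyset$ once $v_p(a_n)\geq M(N)$, so the numerator in (\ref{ssingsubseq}) is eventually zero. Your explicit choice of $N$ with $R_N\geq\max(r,|a_1-\Gamma_i|,|a_2-\Gamma_j|)$, and your remark that isogenies preserve ordinarity (so residue discs of mixed type never meet $Y_0(n)$), only spell out points the paper leaves implicit.
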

	
	\begin{proof}
		Let $a_n$ be a sequence of integers with $v_p(a_n)\to \infty$, as in the Theorem statement, and let $B$ be a ball of radius less than 1 inside a supersingular residue disc. There exists $N$ and $i,j\in \{1,...,s\}^2$ such that $B\subset B_{R_N}((\Gamma_i,\Gamma_j))$ with notation as above, so it suffices to evaluate the limit for such balls. But Lemma \ref{primetop} says that the numerator in (\ref{ssingsubseq}) is zero whenever $v_p(a_n)\geq M$ ($M$ depended on $N$, which is fixed here), since the intersection $Y_0(a_n)\cap B=\emptyset$ in this case. Hence the limit is zero as desired. 
	\end{proof}

	\subsection{Non-equidistribution For a Sequence $a_n$ Coprime to $p$}
	
	In this subsection, let $C$ be a curve smooth over $\zed_p$ and not reducing to any modular curve modulo $p$. Let $\gamma$ be a supersingular elliptic curve in characteristic $p$ and $\Gamma$ be a lift to an unramified base as above. Let $a_n\to \infty$ be a sequence of prime numbers distinct from $p$. The modular curves $Y_0(a_n)$ are also smooth over $\zed_p$. $\gamma$ has as many self-isogenies of degree $a_n$ as the size of the set
	\[\{(a,b,c,d)\in \zed^4 \ | \ a^2+b^2+pc^2+pd^2=a_n\},\]
	
	which has asymptotic size linear in $a_n$. Then $(\gamma,\gamma)$ lies on each $Y_0(a_n)$. The intersection of $C$ with $Y_0(a_n)$ here is an intersection of smooth curves, so intersection points can be uniquely lifted to characteristic 0 unramified fields by Hensel's Lemma. Therefore the number of intersection points in $\bar{B}_{1/p}$ is at least linear in $a_n$, whence the limiting behaviour of the ratio in (\ref{ssingsubseq}) satisfies
	\[\limsup_n \frac{|C\cap Y_0(a_n)\cap B_{1/p^{1/2}}|}{\text{deg}{C}\text{ deg}Y_0(a_n)}>0.\] Hence equidistribution fails, using the converse of Lemma \ref{padicconditionproof} for $C$, similarly to the calculation in Remark \ref{CModular}.

	This behaviour is believed to be typical for curves (at least defined over discretely valued fields) and sequences $a_n$ with bounded $p$-divisibility.

	\section{Around Infinity}\label{infinity}
	
	To complete the proof of Theorem \ref{Main} using the strategy of Lemma \ref{padiccondition}, after the work of Sections \ref{SecOrd} (ordinary residue discs) and \ref{SecSsing} (supersingular residue discs), it remains to compute the limit (\ref{Limit}) for balls of the form $B=B^\infty_R((a_1,a_2))$.
	
	Note first that no modular curve passes through a point of the form $(a,\infty)$ or $(\infty,b)$ where $a,b\in \oh_p$, since at infinity, the reduction modulo $p$ is isomorphic to $\mathbb{G}_m$, whereas when the $j$-invariant is integral, there is good reduction. Thus, after perhaps enlarging the radius to a value still less than 1, the only balls that need to be considered at infinity are those containing $(\infty,\infty)$. To study this, Tate's coordinate from the $p$-adic uniformisation of elliptic curves will be used. This is reviewed below, with material coming from \cite{Sil94}.

	\begin{lemma}
		Fix $n\in \enn$. In Tate coordinates, the locus $\bigcup_{n'|n}Y_0(n')$ is cut out by the equations of the form $q_2^a=q_1^b$ where $a,b$ are divisors of $n$ with $ab=n$.
	\end{lemma}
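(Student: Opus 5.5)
We work near $(\infty,\infty)$ in Tate coordinates. By Tate's theory of $p$-adic uniformisation (\cite{Sil94}), an elliptic curve $E/\cp$ with $|j(E)|>1$ is isomorphic to a Tate curve $E_q=\cp^\times/q^{\zed}$, with $q$ uniquely determined and $|q|<1$. The relation $j=1/q+744+\dots$ gives an analytic bijection between a neighbourhood of infinity in the $j$-line and the punctured disc $0<|q|<1$. We therefore describe the points $(E_{q_1},E_{q_2})$ admitting a cyclic isogeny of degree dividing $n$.

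\textbf{Step 1 (form of isogenies).} Let $\phi\colon E_{q_1}\to E_{q_2}$ be an isogeny. It lifts to a homomorphism of rigid analytic groups $\cp^\times\to\cp^\times$. Such a homomorphism has the form $u\mapsto \lambda u^k$, and compatibility with the identity forces it to be $u\mapsto u^k$ for some $k\in\zed$, up to composing with $u\mapsto u^{-1}$. It must map $q_1^{\zed}$ into $q_2^{\zed}$, so $q_1^{k}=q_2^{l}$ for some integer $l$.

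\textbf{Step 2 (degree).} We compute the degree of the isogeny induced by $u\mapsto u^k$ with $q_1^k=q_2^l$. The kernel is $\{u: u^k\in q_2^{\zed}\}/q_1^{\zed}$. This group has order $kl$: there are $\mu_k$-worth of choices times $l$ cosets, since $u^k=q_2^{m}$ with $q_2^{l}=q_1^{k}$. Every isogeny is obtained this way, possibly composed with an isogeny $E_{q_2'}\to E_{q_2}$ of the same shape.

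\textbf{Step 3 (cyclicity and assembly).} Write $d=\gcd(k,l)$. The kernel contains $\mu_d\times(\text{image of }q_2^{1/d\cdot\ldots})$, which is a copy of $(\zed/d)^2$ when $d>1$, so the isogeny factors through multiplication by $d$. Cyclic isogenies therefore correspond exactly to coprime pairs $(k,l)$ of degree $kl=n'$. Conversely, given $q_1^a=q_2^b$, the map $u\mapsto u^a$ gives an isogeny of degree $ab$. It need not be cyclic, but it equals a cyclic isogeny of degree $ab/\gcd(a,b)^2$ composed with multiplication by $\gcd(a,b)$, and that degree divides $n$. Hence the locus $\bigcup_{n'\mid n}Y_0(n')$ is precisely the union of the loci $q_2^a=q_1^b$ with $ab=n$ (the roles of $a,b$ are symmetric after relabelling).

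\textbf{Main obstacle.} The delicate point is Step 1, classifying analytic homomorphisms between Tate curves over $\cp$ rather than over a discretely valued field. I would first try the rigid-analytic lifting argument through the universal cover $\mathbb{G}_m^{\text{an}}$, using that analytic endomorphisms of $\mathbb{G}_m^{\text{an}}$ are characters $u^k$. Failing that, I would use the algebraic statement $\text{Hom}(E_{q_1},E_{q_2})\leftrightarrow\{(k,l):q_1^k=q_2^l\}$ from \cite{Sil94}.
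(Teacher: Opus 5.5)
Your Steps 1 and 2 are essentially fine: every isogeny $E_{q_1}\to E_{q_2}$ is $u\mapsto u^k$ with $q_1^k=q_2^l$, and it has degree $kl$. The real difficulty lies in Step 3, not Step 1, and the cyclicity criterion there is false.

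From $q_1^k=q_2^l$ and $d=\gcd(k,l)$ you only get $q_1^{k/d}=\zeta q_2^{l/d}$ for some $\zeta\in\mu_d$. The kernel always contains $\mu_d$. It contains the other generator $q_1^{1/d}$ of $E_{q_1}[d]$ only when $\zeta=1$. For example, $u\mapsto u^2\colon E_{q_1}\to E_{-q_1}$ has $k=l=2$, yet its kernel is generated by $\sqrt{-q_1}$ and is cyclic of order $4$. So $(q_1,-q_1)\in Y_0(4)$.

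Your ``coprime pairs'' description would give only the five points $q_2=q_1^4$, $q_2^4=q_1$ over each $q_1$, instead of the six cyclic subgroups of order $4$. These root-of-unity branches $q_2=\zeta q_1^{n/a^2}$ with $\gcd(a,n/a)>1$ are exactly what the paper keeps track of. It does so by enumerating the cyclic subgroups $\langle\zeta_m q_1^{1/a}\rangle$ with $\mathrm{lcm}(a,m)=n$. They reappear as the branches $q_2=\zeta q_1$, $n=a^2$, in Case 2 of Proposition~\ref{Tatesingletorus}.

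For the same reason, your converse factorization ``cyclic of degree $ab/\gcd(a,b)^2$ composed with $[\gcd(a,b)]$'' is wrong. For $q_2=-q_1$ it would force $E_{q_1}\cong E_{-q_1}$. The repair is to take $e$ to be the first elementary divisor of the kernel, $\ker\cong\zed/e\times\zed/(n/e)$. Then the isogeny is $\psi\circ[e]$ with $\psi$ cyclic of degree $n/e^2$.

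Second, your final ``precisely'' needs $Y_0(n')\subseteq\bigcup_{ab=n}\{q_2^a=q_1^b\}$ for every $n'\mid n$, and nothing in your argument supplies it. A point of $Y_0(n')$ only yields a relation $q_1^k=q_2^l$ with $kl=n'$. Composing with $[t]$ only reaches degrees $n't^2$, so you get $ab=n$ only when $n/n'$ is a perfect square. In fact the inclusion is false. For $n=2$, the diagonal $Y_0(1)=\{q_1=q_2\}$ meets neither $\{q_2=q_1^2\}$ nor $\{q_2^2=q_1\}$ on $0<|q_i|<1$.

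What your Steps 1--2 together with the repaired converse do prove is $Y_0(n)\subseteq\bigcup_{ab=n}\{q_2^a=q_1^b\}\subseteq\bigcup_{n'\mid n}Y_0(n')$. The middle set is exactly $\bigcup_{e^2\mid n}Y_0(n/e^2)$. This sandwich is all the paper's own proof establishes, and all that Corollary~\ref{Tatehowmanytori} uses.

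With these corrections, your route is a legitimate alternative to the paper's explicit enumeration of cyclic subgroups and their quotients. You classify all of $\mathrm{Hom}(E_{q_1},E_{q_2})$ through lifts to $\mathbb{G}_m$, which gives the first inclusion for every degree-$n$ isogeny at once. The paper's method instead exhibits the individual translates $q_2=\zeta_m^{n/a}q_1^{n/a^2}$ directly.
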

	
	\begin{proof}
		Given a curve $K^*/q_1^\zed$, the points on the modular curve with first coordinate $q_1$ correspond to taking a quotient of $K^*/q_1^\zed$ by a cyclic subgroup of order $n$. The $n$-torsion is generated by a choice of an $n^\text{th}$ root of $q_1$ and the primitive $n^\text{th}$ roots of unity.
		
		For a subgroup generated by a choice $q_1^{1/n}$ of an $n^\text{th}$ root of $q_1$, have quotients of the form $(K^*/q_1^\zed)/(q_1^{1/n})^\zed = K^*/(q_1^{1/n})^\zed$. That is, points $(q_1,q_2)$ on the curve $q_2^n=q_1$ lie on $Y_0(n)$.
		
		In the case of the unique subgroup generated by $n^\text{th}$ roots of 1, $(K^*/q_1^\zed)/\langle \zeta_n\rangle\cong (K^*/(q_1^n)^\zed)$ (the map from $K^*/q_1^\zed$ being the $n^\text{th}$ power map), which gives, for each value of $q_1$, one more point on $Y_0(n)$, which is $(q_1,q_2^n)$.

		These are special cases of the general case of computing $(K^*/q_1^\zed)/ (\zeta_mq_1^\frac{1}{a})^\zed$ for $a$ and $m$ divisors of $n$ such that lcm$(a,m)=n$ (the lcm condition guaranteeing that such an element generates a subgroup of order exactly $n$). Taking $a=n$ with varying $m$ recovers the first case, and $a=1$, $m=n$ recovers the second. The map
		\[K^*/q_1^\zed\to K^*/(q_1^{n/a})^\zed\to K^*/(\zeta^{n/a}_m q_1^{n/a^2})\]
		has kernel $(\zeta_mq_1^{1/a})^\zed/q_1^\zed$. Thus the point $(q_1,q_2) = (q_1,\zeta_m^{n/a}q_1^{n/a^2})$ lies on a curve of the form $q_2^a=q_1^\frac{n}{a}$ for a divisor $a$ of $n$. 
		
		Conversely, given $(q_1,q_2)$ satisfying $q_2^a = q_1^{n/a}$, taking $a^\text{th}$ roots gives $q_2 = \zeta_c q_1^{n/a^2}$ where $\zeta_c$ is a primitive $c^\text{th}$ root of unity for some $c|a$. Now $m =\frac{cn}{a}$ is a divisor of $n$, so can choose a primitive $m^\text{th}$ root of unity $\zeta_m$ satisfying $\zeta_m^{n/a}=\zeta_c$. Then the above map can be constructed with kernel $(\zeta_mq_1^{1/a})^\zed$ for some primitive root $\zeta_m$. This cyclic subrgoup has size lcm$(m,a)$ which is a divisor of $n$, so $(q_1,q_2)\in Y_0(n')$ for some $n'|n$.		
	\end{proof}
	
	\begin{cor}\label{Tatehowmanytori}
		Fix $R<1$. The intersection of the modular curve $Y_0(n)$ with the ball or radius $R$ around $(\infty,\infty)\in \pee^1\times \pee^1$ consists of a union of at most $n^{1/4}$ branches of the form $q_2^a=q_1^b$.
	\end{cor}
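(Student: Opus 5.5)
The plan is to read the corollary off the preceding Lemma and then use a standard bound on the divisor function. I will count curves of the form $q_2^a=q_1^b$, as the statement does, not their individual root-of-unity translates. First I would justify that Tate coordinates are available on the whole ball in question. A point $(j_1,j_2)$ in a ball of radius $R<1$ around $(\infty,\infty)$ has $|j_1|,|j_2|>1$, so both elliptic curves have split multiplicative reduction after a finite extension. By Tate's uniformisation (as in \cite{Sil94}) they are therefore $\cp^*/q_1^{\zed}$ and $\cp^*/q_2^{\zed}$, where $|q_i|=|j_i|^{-1}<1$ and $j=1/q+744+\dots$. The ball thus corresponds to a product of punctured discs in the $(q_1,q_2)$-coordinates. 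This change of coordinates is analytic and bijective onto its image, so intersection multiplicities are unaffected.

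Next I would apply the preceding Lemma. Since $Y_0(n)\subseteq\bigcup_{n'|n}Y_0(n')$, every point of $Y_0(n)$ in the ball lies on one of the curves $q_2^a=q_1^{n/a}$, where $a$ runs over the divisors of $n$. Each divisor $a$ gives exactly one such curve. Hence the intersection of $Y_0(n)$ with the ball is contained in a union of at most $d(n)$ branches of the stated form, where $d(n)$ is the number of divisors of $n$. One could sharpen this by discarding the branches that belong to $Y_0(n')$ with $n'<n$, but that is not needed for an upper bound.

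The final step is the divisor bound. The standard estimate $d(n)=O_\epsilon(n^\epsilon)$ applied with $\epsilon=1/4$ gives $d(n)\le C n^{1/4}$ for an absolute constant $C$. One checks directly that $C$ can be taken to be $1$ once $n$ is large enough. For the application to the limit (\ref{Limit}) only the growth rate $o(n)$ matters, so I would state the bound as ``at most $Cn^{1/4}$'' and absorb small $n$ into the constant.

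The main obstacle is not the counting but bookkeeping. Each equation $q_2^a=q_1^{n/a}$ breaks up analytically into translates $q_2=\zeta q_1^{n/a^2}$ by roots of unity $\zeta$, and the statement counts the equation as one branch. If one instead wants to count translates, I would bound their number by $a$, obtaining $\sigma(n)$ rather than $d(n)$. That count is linear in $n$, so it would not suffice later. I would therefore keep the count at the level of whole equations $q_2^a=q_1^b$. For each such equation, I would bound the intersection with $C$ by a Strassman/B\'ezout-type argument, as in Lemma \ref{singletorus}.
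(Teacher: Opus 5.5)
Your proposal is correct and follows the paper's route: read the branches off the preceding Lemma, then apply the divisor bound $d(n)=o(n^{\delta})$ for every $\delta>0$. You count one equation $q_2^a=q_1^{n/a}$ per divisor $a$, giving $d(n)$ branches, whereas the paper bounds by $d(n)^2$ and so needs $\delta=1/8$; as you note, either way ``at most $n^{1/4}$'' holds only for large $n$ (or with a constant), which is all that Theorem~\ref{ThmInfty} uses.
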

	
	\begin{proof}
		The Lemma shows that the number of branches is bounded by the square of the number of divisors of $n$. It follows from the Prime Number Theorem as in (\cite{Apo76}, Corollary to Theorem 13.12), that, for any $\delta>0$, the number of divisors of $n$ is $o(n^\delta)$ as $n\to \infty$. Taking $\delta =1/8$ gives the Corollary. 
	\end{proof}

	\begin{prop}\label{Tatesingletorus}
		Fix $R<1$. There exists a constant $C_1$ (depending on $R$ but independent of $n$) such that for any $n\in\enn$ and any, $a$ dividing $n$, there are at most $C_1\sqrt{n}$ intersections of $C$ with the locus $q_2^a = \zeta q_1^{n/a}$ in the ball of radius $R$ in Tate coordinates.

	\end{prop}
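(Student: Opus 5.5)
We will bound the intersections of $C$ with a single branch $q_2^a=\zeta q_1^{n/a}$ by parametrising the branch and applying Strassman's Theorem, in the spirit of Lemma \ref{singletorus}. The difference from that lemma is that the exponents are now integers of size up to $n$, rather than $p$-adic integers ranging over a compact set. We therefore expect a bound linear in the degree of a parametrisation, rather than a uniform one.

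\textbf{Step 1: local equation.} Near $(\infty,\infty)$ the $j$-invariant is a Laurent series $j=1/q+744+\dots$ in the Tate parameter $q$, with integral coefficients. Hence $1/j$ is a power series $h(q)=q+\dots\in\zed[[q]]$, which is an analytic isomorphism on the region $|q|<1$. Writing $C$ in the coordinates $(1/j_1,1/j_2)$ as $G(u_1,u_2)=0$ with $G\in\oh_p[u_1,u_2]$ and substituting $u_i=h(q_i)$ gives a two-variable power series $F(q_1,q_2)$ with integral coefficients. By Proposition \ref{epsdel}, this series may be evaluated naively on the ball $|q_i|<R$.

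\textbf{Step 2: parametrise the branch.} Set $b=n/a$ and $g=\gcd(a,b)$. On the ball, the locus $q_2^a=\zeta q_1^{b}$ is a union of at most $g$ (or $a$) smooth branches, each parametrised by $q_1=\eta t^{a/g}$, $q_2=\eta' t^{b/g}$ with $|t|<R^{g/a}$ and $\eta,\eta'$ roots of unity. The number of these branches is at most $\min(a,b)\le\sqrt n$, since $ab=n$. Substituting a branch into $F$ gives a one-variable series
\[\phi(t)=F(\eta t^{a/g},\eta' t^{b/g}).\]
After rescaling $t$ by an element $\pi$ with $R<|\pi|<1$, as in Lemma \ref{singletorus}, we will count its roots with Strassman's Theorem.

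\textbf{Step 3: bound the Strassman index (main obstacle).} Write $G=\sum g_{ij}u_1^iu_2^j$, so that, since $h(q)=q+\dots$,
\[F(q_1,q_2)=\sum_{i,j} g_{ij}\,q_1^iq_2^j\,(1+\text{higher order}).\]
The key claim is that $\phi$ has a coefficient of absolute value bounded below independently of $n$, located at an index at most $\deg(G)\cdot\max(a,b)/g$. The lowest-order monomial in $t$ comes from minimising $i\,a/g+j\,b/g$ over the Newton polygon of $G$. Because $C$ is neither a Shimura curve nor a vertical or horizontal line, $G$ is not divisible by $u_1$ or $u_2$. It therefore has monomials $u_1^{i_0}$ and $u_2^{j_0}$ with $i_0,j_0\le\deg C$, and one of these gives a term of order at most $\deg C\cdot\max(a,b)/g$.

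The difficulty is cancellation. Several monomials can share the same $t$-degree, and the correction terms from $h$ can interfere. The first thing we would try is to choose a vertex of the Newton polygon of $G$, which is unique for a generic slope $a:b$. For the finitely many slopes that are parallel to an edge, we would use the non-degeneracy of the edge polynomial: its roots are finitely many values of $\eta^{i}\eta'^{j}$, which fail to vanish once $n$ is large or $C$ is not a torus. The absolute value of the surviving coefficient is then a fixed $|g_{ij}|$, so it is independent of $n$.

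Strassman's Theorem then bounds the number of zeros of $\phi$ with $|t|\le|\pi|$ by a constant times the index found above. Since $|\pi|<1$ and the coefficients are integral, terms of larger index cannot dominate once the index exceeds a threshold depending only on $R$ and the surviving coefficient.

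\textbf{Step 4: assemble.} Summing over the at most $g$ branches, each contributing at most $\deg(C)\max(a,b)/g+O(1)$ zeros, gives a bound of roughly $\deg(C)\max(a,b)$. This is the step where $\max(a,b)$ could be as large as $n$ rather than $\sqrt n$. To reach the stated $C_1\sqrt n$ bound, we would use the symmetry $a\leftrightarrow b$: count zeros in whichever coordinate has the smaller exponent. A point with $|q_1|<R$ on the branch then has $|q_2|=|q_1|^{b/a}$, so that only the region $|t|<R^{g/\min(a,b)}$ matters. On this smaller region, we will rescale by $\pi^{1/\min(a,b)}$-type factors so that the Strassman index becomes $O(\min(a,b))\le O(\sqrt n)$ rather than $O(\max(a,b))$. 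Getting this rescaling to give an index bound uniform in $n$ is the step we expect to require the most care.
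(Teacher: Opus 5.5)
\textbf{There is a genuine gap: the estimate that gives $\sqrt n$ is the one you defer.} Your outline matches the paper's: parametrise each branch, rescale, find one coefficient bounded below independently of $n$, apply Strassman's Theorem, and treat degenerate slopes separately. But what you actually carry out only gives $O(\max(a,b))$. That can be of order $n$ (take $a=1$), which is useless for Theorem \ref{ThmInfty}. Step 3 has three concrete problems.
\begin{enumerate}
\item Rescaling $t$ by $\pi$ only counts zeros with $|t|\le|\pi|$. Your own Step 2 region $|t|<R^{g/a}$ is larger than $|\pi|$ once $a/g$ is large, so zeros are missed.
\item Strassman's Theorem does not bound the number of zeros by the \emph{index} of the surviving coefficient. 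On the correct region the coefficients decay at a rate depending on $a/g$, and what matters is the size of the surviving coefficient after rescaling. So the choice of monomial is essential.
\item The claim that the edge polynomial ``fails to vanish once $n$ is large'' is false. For a fixed reduced slope, the same root of unity (e.g.\ $1$) recurs for infinitely many $n$.
\end{enumerate}

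\textbf{The missing argument (the paper's).} Normalise $a\le n/a$ and put $e=a/g$, $f=n/(ag)$; these are coprime with $e\le f$. Substitute $q_1=\pi s^{e}$. In your notation this is $t=\pi^{1/e}s$, not $\pi^{1/\min(a,b)}s$; the latter counts on the larger region $|q_1|\le|\pi|^{1/g}$ and can lose a factor $g$. Now $|s|\le 1$ covers $|q_1|\le|\pi|$, and the branch becomes $q_2=\zeta\pi^{f/e}s^{f}$.

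A monomial $c_{ij}q_1^iq_2^j$ lands in degree $k=ei+fj$ with absolute value at most $|\pi|^{k/e}$. So if some coefficient has absolute value at least $A$, Strassman gives at most $e\log A/\log|\pi|$ zeros.

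Take a pure power $c_{u,0}q_1^u$ of the \emph{larger} coordinate; it exists unless $C=\{q_2=0\}$. Suppose $f>u$. Any contributor to $s^{eu}$ has $ei+fj=eu$, and coprimality of $e$ and $f$ forces $e\mid j$, so $(u,0)$ is the only contributor. Hence $A=|c_{u,0}||\pi|^u$, independent of $n$ and $\zeta$. This gives $O(e)$ zeros per branch, and $O(ge)=O(a)\le O(\sqrt n)$ in total.

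A pure power $q_2^{j_0}$ would instead give $A=|c_{0,j_0}||\pi|^{j_0n/a^2}$, hence $O(f)$ zeros per branch. Allowing that term is what produced your $\max(a,b)$.

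When $f\le u$, only finitely many pairs $(e,f)$ occur, but still infinitely many roots of unity. You then need a uniform lower bound for a fixed nonzero polynomial evaluated at roots of unity outside its finitely many zeros. The paper cites \cite{Ber02} for this. It also follows easily because distinct roots of unity in $\cp$ are at distance at least $p^{-1/(p-1)}$. The finitely many exceptional fixed branches are then counted separately.
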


	\begin{proof}
		Without loss of generality, assume $a\leq\frac{n}{a}$. Let $d =$ hcf $(a,\frac{n}{a})$ and write $a=de$.
		
		Write $C$ in Tate coordinates as the vanishing of $F(q_1,q_2) = \sum_{ij} c_{i,j}q_1^iq_2^j$, convergent for $|q_i|<1$. Choose $\pi \in \cp$ with $R<|\pi|<1$ and write $t=|\pi|$. Write $q_1 = \pi x$, so that, after substituting below, there will be convergence for $|x|\leq 1$. Then writing $r = \frac{n}{a^2}$, have $q_2 = \zeta (\pi x)^r$ for some $a^\text{th}$ root of unity $\zeta$.

		Consider the coefficients $c_{i,0}$. If these are all zero, $F(q_1,q_2) = q_2G(q_1,q_2)$ for some $G$, but $C$ is irreducible so $C = \mathbb{V}(q_2)$. This means that $C$ only intersects modular curves at $(\infty,\infty)$ in this residue disc. So may assume there exists $u$ with $c_{u,0}\ne 0$. Let $A = |c_{u,0}||\pi|^u>0$.
		
		\textbf{Case 1:} Suppose first that $\frac{n}{ad}>u$. Consider the expansion
		\[F(\pi x,\zeta(\pi x)^r) = \sum_{ij} c_{i,j}\pi^ix^i\zeta^j\pi^{rj}x^{rj} = \sum_k b_k x^{k/e}\] where $b_k = \pi^{k/e}\sum_{ei+(nj)/(ad)=k} c_{i,j}\zeta^j$. For $k=eu$, only one term contributes, else if $i$ and $j$ were such that $ei+(nj)/(ad) = eu$, would have $e$ dividing $j$ since $e$ is coprime to $n/(ad)$, but then $(nj)/(ad)$ already exceeds $eu$ by the assumption that $n/(ad)>u$. Thus have $b_{eu} = \pi^uc_{u,0}$, so $|b_{eu}|=A$.
		
		Now changing variable to $s = x^{1/e}$ (the decomposition $a=de$ is such that $n/a^2$ has denominator $e$ in lowest terms), the coefficients of the power series $\sum_k b_ks^k$ satisfy $|b_k|\leq t^{k/e}$, where $t=|\pi|$. That is, choosing $I$ such that $t^{I/e}<A$, the number of roots of the power series for $|s|\leq 1$ can number at most $I$. Solving, must choose $I>e\frac{\log{A}}{\log{t}}$. So the number of roots is at most $C_2e \leq C_2a \leq C_2\sqrt{n}$. The roots are recovered as $(q_1,q_2) = (\pi s^e, \zeta\pi^rs^{n/ad})$.
		
		A choice of $\zeta$ was made in this part of the argument, but each choice counts the same roots in $(q_1,q_2)$-coordinates. For instance, looking at the branch $q_2 = q_1^r$ instead (i.e. taking $\zeta=1$), consider the expansion		
		\[F(\pi x, (\pi x^r))=  \sum_{ij} c_{i,j}\pi^ix^i\pi^{rj}x^{rj} = \sum_k d_k z^k\] where $z = x^{1/e}$ and $d_k = \pi^{k/e}\sum_{ei+(nj)/(ad)=k} c_{i,j}$. Making the substitution $z=\zeta^ls$ for $l$ an inverse to $n/ad$ modulo $e$ transforms this equation into the previous one and indicates that the solutions coming from $(q_1,q_2) = (\pi s^e, \zeta\pi^rs^{n/ad})$ are exactly the same as those coming from $(q_1,q_2) = (\pi s^e, \pi^rz^{n/ad})$. Indeed this overcounting can also be observed from the fact that the cover given by the $s$ coordinates is increasing multiplicity (away from 0, at least) by $e$.

		\textbf{Case 2:} It remains to deal with the cases where $n/(ad)\leq u$. Since $a\leq n/a$, $e = a/d = (n/(ad))/(n/a^2)\leq n/(ad)\leq u$, so there are finitely many possibilities for $e$ and $n/(ad)$, both less than $u$. Thus for each of these finitely many choices, need to bound, running across all roots of unity $\zeta$, the number of zeros of $\sum_kb_k(\zeta)x^{k/e}$ where		
		\begin{align}\label{rootsof1}
			b_k(\zeta) = \pi^{k/e}\sum_{ei+(nj)/(ad)=k} c_{i,j}\zeta^j.
		\end{align} Note that infinitely many different $\zeta$ can appear coming from the cases $n=a^2$, where $a$ is arbitrary.
		
		Choose $K$ such that $b_K(1)\ne0$. Then $b_K(\zeta)$ is a non-zero polynomial in $\zeta$, so has at most finitely many zeros, $\zeta_1,...,\zeta_h$, in roots of unity. Now by \cite{Ber02}, there exists $c>0$ such that if $\zeta\ne \zeta_i$ for any $i=1,...,h$ then $|b_K(\zeta)|>c$.

		As before, let $I$ be such that $t^{I/e}<c$. Recall that $e$ is being thought of as fixed as one of the finitely many exceptional $e<u$. This means that for any $\zeta\ne \zeta_i$, there are at most $C_3 = C_3(e,n/(ad)) \colonequals e\frac{\log{c}}{\log{t}}$ many roots with $|q_i|<R$ by Strassman's Theorem. If $D_i(e,n/(ad))$ is the number of roots contributed by the zero $\zeta_i$ of $b_K(\zeta)$, and \[C_4 \colonequals \max_{\{e,n/(ad)\leq u\}}\{C_3(e,n/(ad)),D_1(e,n/(ad)),...,D_k(e,n/ad)\},\] can conclude that there are at most $C_4$ roots of (\ref{rootsof1}) for any of the exceptional choices. Thus the constant $C_2$ from above may be enlarged to a constant $C_1$ to account for these finitely many cases, coming from branches of $Y_0(n)$ where both $e$ and $n/(ad)\leq u$.
	\end{proof}

	\begin{thm}\label{ThmInfty}
		The equidistribution condition is satisfied on all balls of radius $R<1$ around infinity.
	\end{thm}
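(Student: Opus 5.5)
The plan is to combine Corollary \ref{Tatehowmanytori} with Proposition \ref{Tatesingletorus}, just as Theorem \ref{ThmOrd} combined Lemma \ref{listoftori} with Lemma \ref{singletorus}. By Lemma \ref{padiccondition} we must show
\[\lim_{n\to\infty}\frac{|C\cap Y_0(a_n)\cap B|}{\deg(C)\deg(Y_0(a_n))}=0\]
for each ball $B=B^\infty_{R'}((a_1,a_2))$ with $R'>1$. This is needed both for the full sequence $n$ and for any subsequence $a_n$, and it suffices to treat the full sequence, since the bound we obtain is uniform in $n$. First I would reduce to balls around $(\infty,\infty)$. Near a point $(\infty,b)$ or $(a,\infty)$ with $a,b$ integral, no modular curve passes, as observed at the start of this section. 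After enlarging the radius (still corresponding to a disc of radius $<1$ in the coordinate $1/j$ at infinity), each such region is therefore either disjoint from every $Y_0(n)$ or contained in a neighbourhood of $(\infty,\infty)$. On a small neighbourhood of $(\infty,\infty)$, the Tate parameter $q=q(j)$ is an analytic coordinate, with $1/j=q+O(q^2)$ having integral coefficients. Hence a ball of radius $R<1$ around $(\infty,\infty)$ in $1/j$-coordinates corresponds to a ball of radius $R<1$ in $(q_1,q_2)$, and the equation of $C$ becomes a power series $F(q_1,q_2)$ convergent for $|q_i|<1$. This is exactly the setting of Proposition \ref{Tatesingletorus}. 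The compatibility of compositions needed here is handled by Proposition \ref{epsdel}, in the same way as in \S\ref{FinishProof}.

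Next comes the counting. By Corollary \ref{Tatehowmanytori}, $Y_0(n)\cap B_R((\infty,\infty))$ is a union of at most $n^{1/4}$ branches $q_2^a=q_1^{n/a}$. Each branch is a union of translates $q_2=\zeta q_1^{n/a^2}$, and the argument of Proposition \ref{Tatesingletorus} counts all of these together, since its Case 1 remark shows that different choices of $\zeta$ recount the same roots. That proposition then bounds the intersection of $C$ with each branch by $C_1\sqrt{n}$, with $C_1$ independent of $n$. Multiplicities are accounted for because Strassman's Theorem counts roots with multiplicity, and the intersection multiplicity of $C$ with a smooth branch equals the order of vanishing of the restricted power series. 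Summing over the branches gives
\[|C\cap Y_0(n)\cap B_R((\infty,\infty))|\le C_1 n^{3/4}.\]

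The denominator is $\deg(C)\deg(Y_0(n))\ge \deg(C)\,n$, since $\deg Y_0(n)=\psi(n)\ge n$. The ratio is therefore $O(n^{-1/4})$, which tends to $0$. This holds along every sequence of integers tending to infinity, so it applies in both parts (i) and (ii) of Theorem \ref{Main}.

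I expect the main obstacle to be the reduction step: justifying that the balls $B^\infty_R$ of Lemma \ref{padiccondition} are covered by finitely many Tate-coordinate discs of radius $<1$ together with regions missing all modular curves. One must check carefully that a point of the form $(\infty,b)$ with $|b|$ large but finite (non-integral) is still handled. I would try to show that such points lie in the same Tate disc as $(\infty,\infty)$ after enlarging the radius, because $|q(j)|=|1/j|$ for $|j|>1$. One must also check that the exceptional case $C=\mathbb{V}(q_2)$ in the proof of Proposition \ref{Tatesingletorus} contributes only the single point $(\infty,\infty)$, which is not on any $Y_0(n)$ in the affine part and so does not count.
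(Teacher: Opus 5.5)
Your proposal is the paper's proof. You reduce to balls around $(\infty,\infty)$, pass to Tate coordinates, and multiply the at most $n^{1/4}$ branches of Corollary~\ref{Tatehowmanytori} by the $C_1\sqrt{n}$ per-branch bound of Proposition~\ref{Tatesingletorus}, giving $C_1n^{3/4}=o(\deg Y_0(n))$.

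One small caution: the Case~1 remark you cite (that different $\zeta$ recount the same roots) fails when $d=\gcd(a,n/a)>1$; for example, for $n=a^2$ the curves $q_2=\zeta q_1$ are $a$ distinct components. The per-branch bound still holds, because summing the per-component bounds ($C_2e$ in Case~1, a constant in Case~2) over the $d$ components gives $O(a)=O(\sqrt{n})$, using $a\le n/a$ without loss of generality.
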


	\begin{proof}
		Fix $R<1$. As argued at the start of this section, the only balls which can contain intersections are those around $(\infty,\infty)$, thus Tate coordinates centred at $(0,0)$ can be used. Given any curve $C$ in Tate coordinates, Proposition \ref{Tatesingletorus} says that there are at most $C_1\sqrt{n}$ intersections between $C$ and the locus $q_2^a=q_1^b$ with $ab=n$. Corollary \ref{Tatehowmanytori} says that $Y_0(n)$ is comprised of at most $n^{1/4}$ such branches inside the ball of radius $R$ around infinity. Hence 
		\[|C\cap Y_0(n)\cap B_R(\infty,\infty)|\leq C_1n^{3/4},\] whence the limit in (\ref{ordseq}) is seen to be zero as required.
	\end{proof}
	
	\begin{thm}
		Theorem \ref{Main} holds.
	\end{thm}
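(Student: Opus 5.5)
The plan is to assemble Theorem \ref{Main} from the local results already proved, using the criterion of Lemma \ref{padiccondition} (equivalently Lemma \ref{padicconditionproof}). We apply it to the multisets $\Lambda_n = C\cap Y_0(n)$ in case (ii), and $\Lambda_n = C\cap Y_0(a_n)$ in case (i). By B\'ezout, $|\Lambda_n| = \deg(C)\deg(Y_0(n))$ with $\deg Y_0(n)\geq n$. So it suffices to show that, for every ball $B$ of radius $R<1$ and every ball at infinity $B^\infty_R$ with $R>1$, the count $|\Lambda_n\cap B|$ is $o(n)$ (respectively $o(a_n)$). Here $C$ is not a Shimura subvariety, which is the standing assumption used in Lemma \ref{singletorus}.

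The first step is to sort the balls by type. A ball of radius $R<1$ lies inside a single two-dimensional residue disc, whose centre $(\bar E_1,\bar E_2)$ has each coordinate ordinary, supersingular, or at $\infty$. By the remark opening Section \ref{infinity}, no modular curve meets a ball centred at $(a,\infty)$ or $(\infty,b)$ with $a,b$ integral. A ball $B^\infty_R$ with $R>1$ is covered by finitely many residue discs at infinity together with such mixed discs, after identifying the complement of the closed unit polydisc with neighbourhoods of infinity via $j\mapsto 1/j$. This leaves four kinds of discs.

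\textbf{Both coordinates ordinary.} Theorem \ref{ThmOrd} gives the bound $N(F,R)C_1u_R\sqrt n = o(n)$ for every $n$. This also applies along any subsequence $a_n$, since $a_n\to\infty$.

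\textbf{Both coordinates supersingular.} In case (ii) there is nothing to check, because $C$ avoids such points and hence misses these discs entirely. In case (i), Theorem \ref{ThmSsing} shows that $Y_0(a_n)\cap B=\emptyset$ once $v_p(a_n)\geq M$, so the count is eventually $0$.

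\textbf{One coordinate ordinary, one supersingular.} These discs contain no points of any $Y_0(n)$. Isogenous curves over $\oh_p$ have isogenous reductions, and ordinarity is an isogeny invariant.

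\textbf{Around $(\infty,\infty)$.} Theorem \ref{ThmInfty} gives the bound $C_1 n^{3/4}=o(n)$, and the same bound holds along the subsequence $a_n$.

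Finally, each ball appearing in Lemma \ref{padiccondition} is covered by finitely many such discs, and the discs can be enlarged to some radius $R'<1$ as in \S\ref{Notation}. The ratio in (\ref{Limit}) is then a finite sum of terms tending to $0$, which gives weak convergence to $\delta_{x_\text{can}}$ in both (i) and (ii).

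The step I expect to need the most care is the bookkeeping at infinity. Two points must be checked: that balls of the shape $B^\infty_R$ with $R>1$ reduce to finitely many small balls around $(\infty,\infty)$ and the mixed discs, and that the mixed discs genuinely contribute nothing, so that no gap is left between the local theorems and the global criterion. A secondary point is confirming that the ordinary and infinity bounds, which are stated for all $n$, transfer unchanged to the subsequence in (i). They do, because the bounds are sublinear in $n$ and the denominators grow linearly in $a_n$.
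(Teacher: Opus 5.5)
Your assembly matches the paper's: the criterion of \S\ref{Berk} combined with Theorems \ref{ThmOrd}, \ref{ThmSsing} and \ref{ThmInfty}. Your remark that ordinary/supersingular and finite/infinite mixed discs contain no points of any $Y_0(n)$ is correct, and it makes explicit something the paper leaves implicit. The gap is exactly at the step you flagged. You verify the condition on the sets $B^\infty_R$, $R>1$, of Lemma \ref{padiccondition}, and you claim these reduce to balls of radius $<1$ around $(\infty,\infty)$ plus mixed discs, after enlarging radii to some $R'<1$. That is false. In Tate coordinates ($|q_i|=|1/j_i|$ near infinity), $B^\infty_R((0,0))$ meets the residue disc of $(\infty,\infty)$ in $\{|q_1|<1/R,\ |q_2|<1\}\cup\{|q_1|<1,\ |q_2|<1/R\}$. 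This set contains points with $|q_2|$ (or $|q_1|$) arbitrarily close to $1$, so it lies in no finite union of polydiscs of radius $<1$, and Theorem \ref{ThmInfty} says nothing about it. The needed $o(n)$ bound there can also genuinely fail. Take $C=\{1/p\}\times\pee^1$: it is smooth, has irreducible reduction, is not Shimura, and contains no supersingular pairs. Every point of $C\cap Y_0(n)$ has $|j_1|=p$, so it lies in $B^\infty_R((0,0))$ for $1<R<p$, and the ratio in (\ref{Limit}) is identically $1$. Equidistribution still holds for this curve. All but $o(n)$ of the points lie on branches $q_2^a=\zeta q_1^{n/a}$ with $|q_2|=p^{-n/a^2}$ arbitrarily close to $1$, so they accumulate at the Gauss point $x_\text{can}$ of $C\cong\pee^1$. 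So what breaks is the $B^\infty_R$ form of the criterion, not the theorem, and your route as written cannot close.

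The repair is to use Lemma \ref{padicconditionproof} directly, as the paper's proof does. Its hypothesis only concerns polydiscs of radius $<1$ in $\pee^1\times\pee^1$ for the sup metric, with $1/j$ (equivalently the Tate parameter) as the coordinate near infinity. Every such polydisc falls into one of these cases:
\begin{itemize}
\item ordinary/ordinary, handled by Theorem \ref{ThmOrd};
\item supersingular/supersingular, handled by Theorem \ref{ThmSsing} in case (i), and disjoint from $C$ in case (ii);
\item ordinary/supersingular or finite/infinite, which contain no modular points by your isogeny-invariance remarks;
\item centred at $(\infty,\infty)$, handled by Theorem \ref{ThmInfty}.
\end{itemize}
With this substitution the rest of your argument goes through unchanged, including the transfer of the sublinear bounds to the subsequence $a_n$.
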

	
	\begin{proof}
		This is now a matter of combining Lemma \ref{padicconditionproof} (which translated the Theorem into showing some limits were zero) with Theorems \ref{ThmOrd}, \ref{ThmSsing} and \ref{ThmInfty} (which computed these limits in the ordinary, supersingular and multiplicative reduction cases).
	\end{proof}

\printbibliography

\end{document}